\documentstyle[amscd,amssymb,xypic,11pt]{amsart}
\xyoption{all}

\topmargin=-1.5cm
\oddsidemargin=-1cm
\evensidemargin=-.5cm
\textwidth=17.5cm
\textheight=23.5cm

\newcommand{\nc}{\newcommand}

\nc{\exto}[1]{\stackrel{#1}{\longrightarrow}}
\nc{\dlim}{{\mathop{\lim\limits_{\longrightarrow}\,}}}
\nc{\ilim}{{\mathop{\lim\limits_{\longleftarrow}\,}}}
\nc{\hocolim}{{\mathop{\sf hocolim}\,}}
\nc{\holim}{{\mathop{\sf holim}}}
\nc{\lan}{\big\langle}
\nc{\ran}{\big\rangle}

\nc{\kk}{{\mathsf{k}}}

\nc{\C}{{\mathbb{C}}}
\nc{\HH}{{\mathbb{H}}}
\nc{\PP}{{\mathbb{P}}}
\nc{\QQ}{{\mathbb{Q}}}
\nc{\ZZ}{{\mathbb{Z}}}

\nc{\CA}{{\mathcal{A}}}
\nc{\CB}{{\mathcal{B}}}
\nc{\CC}{{\mathcal{C}}}
\nc{\D}{{\mathcal{D}}}
\nc{\CE}{{\mathcal{E}}}
\nc{\CF}{{\mathcal{F}}}
\nc{\CG}{{\mathcal{G}}}
\nc{\CH}{{\mathcal{H}}}
\nc{\CL}{{\mathcal{L}}}
\nc{\CM}{{\mathcal{M}}}
\nc{\CN}{{\mathcal{N}}}
\nc{\CO}{{\mathcal{O}}}
\nc{\CP}{{\mathcal{P}}}
\nc{\CQ}{{\mathcal{Q}}}
\nc{\CR}{{\mathcal{R}}}
\nc{\CS}{{\mathcal{S}}}
\nc{\CT}{{\mathcal{T}}}
\nc{\CU}{{\mathcal{U}}}
\nc{\CV}{{\mathcal{V}}}
\nc{\CW}{{\mathcal{W}}}
\nc{\CX}{{\mathcal{X}}}
\nc{\CY}{{\mathcal{Y}}}
\nc{\CMo}{{\mathcal{M}^\circ}}
\nc{\Co}{{{C}^\circ}}

\nc{\BY}{{\overline{Y}}}
\nc{\BYD}{{\overline{Y}{}^{|D|}}}
\nc{\OZ}{{\overline{Z}}}
\nc{\bg}{{\bar{g}}}

\nc{\bq}{{\mathbf{q}}}
\nc{\BD}{{\mathbf{D}}}
\nc{\BG}{{\mathbf{G}}}
\nc{\BM}{{\mathbf{M}}}
\nc{\BP}{{\mathbf{P}}}
\nc{\BZ}{{\mathbf{Z}}}
\nc{\BPr}{{\mathsf{P}}}
\nc{\BR}{{\mathbf{R}}}
\nc{\BRO}[1]{{{\mathbf{R}}^{\circ}_{#1}}}
\nc{\BRD}[1]{{{\mathbf{R}}^{|D|}_{#1}}}
\nc{\BRP}[1]{{{\mathbf{R}}^{1}_{#1}}}
\nc{\BRTP}[1]{{{\mathbf{\tilde{R}}}{}^{1}_{#1}}}
\nc{\BS}{{\mathbf{S}}}
\nc{\BMS}{{{\mathbf{M}}^{{s}}}}
\nc{\BMSS}{{{\mathbf{M}}^{{ss}}}}
\nc{\BMZ}{{\mathbf{M}^{\circ}}}
\nc{\BCL}{{\mathbf{L}}}

\nc{\PCC}{{{}^\perp\CC}}

\nc{\Cl}{{\mathsf{Cliff}}}
\nc{\Clev}{{\mathop{\mathsf{Cliff}}^{\circ}}}

\nc{\FA}{{\mathfrak{A}}}
\nc{\FB}{{\mathfrak{B}}}

\nc{\fa}{{\mathfrak{a}}}
\nc{\fb}{{\mathfrak{b}}}
\nc{\fg}{{\mathfrak{g}}}
\nc{\fn}{{\mathfrak{n}}}
\nc{\fp}{{\mathfrak{p}}}
\nc{\FD}{{\mathfrak{D}}}
\nc{\FE}{{\mathfrak{E}}}
\nc{\FL}{{\mathfrak{L}}}
\nc{\FM}{{\mathfrak{M}}}
\nc{\FS}{{\mathsf{S}}}

\nc{\sfc}{{\mathsf{c}}}
\nc{\sfch}{{\mathsf{ch}}}
\nc{\sfh}{{\mathsf{h}}}

\nc{\SK}{{\mathsf{K}}}
\nc{\SM}{{\mathsf{M}}}
\nc{\SO}{{\mathsf{O}}}
\nc{\SQ}{{\mathsf{Q}}}
\nc{\SPV}{{\mathsf{S}^+\mathsf{V}}}
\nc{\SMV}{{\mathsf{S}^-\mathsf{V}}}
\nc{\SPMV}{{\mathsf{S}^\pm\mathsf{V}}}
\nc{\SX}{{S_X}}
\nc{\SY}{{S_Y}}
\nc{\phipsi}{{q}}
\nc{\eps}{\varepsilon}

\nc{\pim}{{\pi_-}}
\nc{\pip}{{\pi_+}}

\nc{\BE}{{\overline{\CE}}}
\nc{\TE}{{\tilde{\CE}}}
\nc{\TQ}{{\tilde{Q}}}
\nc{\TCF}{{\tilde{\CF}}}
\nc{\TCG}{{\tilde{\CG}}}
\nc{\TCL}{{\tilde{\CL}}}
\nc{\TF}{{\tilde{F}}}
\nc{\TW}{{\tilde{W}}}
\nc{\TCC}{{\tilde{\CC}}}
\nc{\TCX}{{\tilde{\CX}}}
\nc{\TCY}{{\tilde{\CY}}}
\nc{\TPhi}{{\tilde{\Phi}}}
\nc{\OPhi}{{\bar{\Phi}}}
\nc{\txi}{{\tilde{\xi}}}
\nc{\tp}{{\tilde{p}}}
\nc{\tq}{{\tilde{q}}}
\nc{\tzeta}{{\tilde{\zeta}}}
\nc{\tpi}{{\tilde{\pi}}}

\nc{\halpha}{{\hat{\alpha}}}
\nc{\HCA}{{\hat{\CA}}}
\nc{\HCB}{{\hat{\CB}}}
\nc{\HCC}{{\hat{\CC}}}
\nc{\HE}{{\widehat{\CE}}}
\nc{\HX}{{\hat{X}}}
\nc{\hxi}{{\hat{\xi}}}

\nc{\UH}{{\mathcal{H}}}

\nc{\TM}{{\widetilde{M}}}
\nc{\TCM}{{\widetilde{\CM}}}
\nc{\TU}{{\widetilde{U}}}
\nc{\TX}{{\widetilde{X}}}
\nc{\TY}{{\widetilde{Y}}}
\nc{\TYO}{{{\widetilde{Y}}^\circ}}
\nc{\barf}{{\bar{f}}}
\nc{\te}{{\tilde{e}}{}}
\nc{\tf}{{\tilde{f}}}
\nc{\tg}{{\tilde{g}}}
\nc{\ti}{{\tilde{\imath}}}
\nc{\tj}{{\tilde{\jmath}}}
\nc{\ty}{{\tilde{y}}}
\nc{\tphi}{{\tilde{\phi}}}

\nc{\urho}{{\underline{\rho}}}

\nc{\LRA}{\Leftrightarrow}
\nc{\RA}{\Rightarrow}
\nc{\lotimes}{\mathbin{\mathop{\otimes}\limits^{\mathbb{L}}}}
\nc{\CEnd}{\mathop{\mathcal{E}\mathit{nd}}\nolimits}
\nc{\CExt}{\mathop{\mathcal{E}\mathit{xt}}\nolimits}
\nc{\CHom}{\mathop{\mathcal{H}\mathit{om}}\nolimits}
\nc{\RH}{\mathop{{\mathsf{R}}\Gamma}\nolimits}
\nc{\RGamma}{\mathop{{\mathsf{R}}\Gamma}\nolimits}
\nc{\RHom}{\mathop{\mathsf{RHom}}\nolimits}
\nc{\RCHom}{\mathop{\mathsf{R}\mathcal{H}\mathit{om}}\nolimits}
\nc{\RG}{\mathop{\mathsf{R\Gamma}}\nolimits}
\nc{\Hom}{\mathop{\mathsf{Hom}}\nolimits}
\nc{\Ext}{\mathop{\mathsf{Ext}}\nolimits}
\nc{\End}{\mathop{\mathsf{End}}\nolimits}
\nc{\Tor}{\mathop{\mathsf{Tor}}\nolimits}
\nc{\Tordim}{\mathop{\mathsf{Tor}\text{\rm-}\mathsf{dim}}\nolimits}
\nc{\Hilb}{\mathop{\mathsf{Hilb}}\nolimits}
\nc{\Spec}{\mathop{\mathsf{Spec}}\nolimits}
\nc{\Pic}{\mathop{\mathsf{Pic}}\nolimits}
\renewcommand{\Im}{\mathop{\mathsf{Im}}\nolimits}
\nc{\Tw}{\mathop{\mathsf{Tw}}\nolimits}
\nc{\Cone}{\mathop{\mathsf{Cone}}\nolimits}
\nc{\Fiber}{\mathop{\mathsf{Fiber}}\nolimits}
\nc{\Ker}{\mathop{\mathsf{Ker}}\nolimits}
\nc{\Coker}{\mathop{\mathsf{Coker}}\nolimits}
\nc{\codim}{\mathop{\mathsf{codim}}\nolimits}
\nc{\sing}{{\mathsf{sing}}}
\nc{\supp}{\mathop{\mathsf{supp}}}
\nc{\perf}{{\mathsf{perf}}}
\nc{\qperf}{{\mathsf{q\text{-}perf}}}
\nc{\QPerf}{{\mathsf{Q\text{-}Perf}}}
\nc{\rank}{\mathop{\mathsf{rank}}}
\nc{\Pf}{{\mathsf{Pf}}}
\nc{\Gr}{{\mathsf{Gr}}}
\nc{\OGr}{{\mathsf{OGr}}}
\nc{\Flag}{{\mathsf{Fl}}}
\nc{\Kosz}{{\mathsf{Kosz}}}
\nc{\LGr}{{\mathsf{LGr}}}
\nc{\GTGr}{{\mathsf{G_2Gr}}}
\nc{\GTF}{{\mathsf{G_2F}}}
\nc{\OF}{{\mathsf{OF}}}
\nc{\Fl}{{\mathsf{Fl}}}
\nc{\Bl}{{\mathsf{Bl}}}
\nc{\GL}{{\mathsf{GL}}}
\nc{\PGL}{{\mathsf{PGL}}}
\nc{\SL}{{\mathsf{SL}}}
\nc{\SP}{{\mathsf{Sp}}}
\nc{\Spin}{{\mathsf{Spin}}}
\nc{\Tot}{{\mathsf{Tot}}}
\nc{\ev}{{\mathsf{ev}}}
\nc{\od}{{\mathsf{odd}}}
\nc{\coev}{{\mathsf{coev}}}
\nc{\id}{{\mathsf{id}}}
\nc{\opp}{{\mathsf{opp}}}
\nc{\PS}{{{\PP^3}}}
\nc{\Qu}{{{Q^3}}}
\nc{\tdim}{\mathop{\Tor\dim}}
\nc{\ecart}{{\fbox{$\scriptstyle\mathsf{EC}$}}}
\nc{\ad}{{\mathop{\mathsf ad}}}
\nc{\sg}{{\mathop{\mathsf sg}}}
\nc{\hf}{{\mathop{\mathsf hf}}}
\nc{\gr}{{\mathop{\mathsf gr}}}
\nc{\qgr}{{\mathop{\mathsf qgr}}}
\nc{\Coh}{{\mathop{\mathsf Coh}}}
\nc{\Ab}{{\mathop{\mathcal{A}\mathit{b}}}}
\nc{\Ccoh}{{\mathop{\mathsf Ccoh}}}
\nc{\Qcoh}{{\mathop{\mathsf Qcoh}}}

\nc{\AAV}{{\mathcal{AAV}}}

\nc{\Rep}{{\mathsf{Rep}}}

\nc{\Cubics}{{{\mathcal{S}}_3}}
\nc{\VFT}{{{\mathcal{S}}_{14}}}
\nc{\VFTE}{{{\mathcal{N}}_{\mathrm{reg,sm}}}}
\nc{\MX}{{\CM_X}}
\nc{\MY}{{\CM_Y}}
\nc{\MYE}{{\CM_{Y,\CE}}}
\nc{\Yd}{{Y_d}}
\nc{\Yfive}{{Y_5}}
\nc{\Xg}{{X_{2g-2}}}
\nc{\Xtt}{{X_{22}}}
\nc{\Xst}{{X_{16}}}
\nc{\Xtw}{{X_{12}}}
\nc{\Xe}{{X_{8}}}
\nc{\Xf}{{X_{4}}}

\nc{\git}{{/\!\!/\!{}_\chi}}

\theoremstyle{plain}

\newtheorem{theorem}{Theorem}[section]

\newtheorem{lemma}[theorem]{Lemma}
\newtheorem{proposition}[theorem]{Proposition}
\newtheorem{corollary}[theorem]{Corollary}

\theoremstyle{definition}

\newtheorem{definition}[theorem]{Definition}

\theoremstyle{remark}

\newtheorem{remark}[theorem]{Remark}

\newenvironment{proof}{\noindent{\sf Proof:}}{\qed\medskip}

\title{Base change for semiorthogonal decompositions}
\author{Alexander Kuznetsov}
\address{\sloppy
\parbox{0.9\textwidth}{
Algebra Section, Steklov Mathematical Institute, 8 Gubkin str., Moscow 119991 Russia\hfill\\[5pt]
The Poncelet Laboratory, Independent University of Moscow\hfill\\[5pt]
}}
\email{akuznet@@mi.ras.ru}
\date{}
\thanks{I was partially supported by
RFFI grants 07-01-00051, 07-01-92211, and 08-01-00297,
NSh 1987.2008.1,
Russian Presidential grant for young scientists No. MD-2712.2009.1.}

\pagestyle{plain}

\begin{document}

\begin{abstract}
Let $X$ be an algebraic variety over a base scheme $S$ and $\phi:T \to S$ a base change.
Given an admissible subcategory $\CA$ in $\D^b(X)$, the bounded derived category of coherent
sheaves on $X$, we construct under some technical conditions an admissible subcategory $\CA_T$
in $\D^b(X\times_S T)$, called the base change of $\CA$, in such a way that the following base
change theorem holds: if a semiorthogonal decomposition of $\D^b(X)$ is given then the base changes
of its components form a semiorthogonal decomposition of $\D^b(X\times_S T)$. As an intermediate step
we construct a compatible system of semiorthogonal decompositions of the unbounded derived category of
quasicoherent sheaves on $X$ and of the category of perfect
complexes on $X$. As an application we prove that the projection
functors of a semiorthogonal decomposition are kernel functors.
\end{abstract}

\maketitle

\section{Introduction}

An important approach to the noncommutative algebraic geometry is to consider
triangulated categories with good properties as substitutes for noncommutative
varieties. Given such category we consider it as the bounded derived category
of coherent sheaves on a would-be variety and try to do some geometry. Note however that
even the simplest geometric functors between derived categories often do not preserve
boundedness or coherence --- the pullback functor preserves boundedness only if the corresponding
morphism has finite $\Tor$-dimension and the pushforward functor
preserves coherence only if the corresponding map is proper.
So, to do noncommutative geometry we need some unbounded and quasicoherent versions
of triangulated categories under consideration. One goal of this paper is the following:
given a good triangulated category $\CA$ (considered as a bounded derived category
of coherent sheaves) to define a category $\CA_{qc}$ which is a substitute for the unbounded
derived category of quasicoherent sheaves and a category $\CA^-$, a substitute for the bounded
above derived category of coherent sheaves.

A straightforward approach to construct $\CA_{qc}$ would be just to consider the closure of $\CA$ under colimits.
However it is not clear how to define a triangulated structure there. So, instead, we assume
that the category $\CA$ is given as an admissible subcategory in $\D^b(X)$, the bounded derived
category of coherent sheaves on some algebraic variety $X$, and consider the minimal triangulated
subcategory $\HCA \subset \D_{qc}(X)$ containing $\CA$ and closed under arbitrary direct sums.
Defined this way the category $\HCA$
inherits a triangulated structure automatically, but there arises a question of dependence of $\HCA$ on the choice
of the variety $X$ and of the embedding $\CA \to \D^b(X)$. We prove that it is actually independent
of these choices under some technical condition.

Another, and in fact the most important goal of the paper, is to define a base change
for triangulated categories. Assume that $S$ is an algebraic variety and $\CA$ is
a good triangulated category over $S$ (which can be understood, for example,
as that $\CA$ is a module category over the tensor triangulated category $\D^\perf(S)$
of perfect complexes on $S$). Given a base change $\phi:T \to S$ we would like to define
a triangulated category $\CA_T$ over $T$ to be considered as the base change of $\CA$.
Again, an abstract approach is too complicated, so we assume that $\CA$ is given as
an $S$-linear admissible subcategory in $\D^b(X)$ ($S$-linear means closed under tensoring
with pullbacks of perfect complexes on $S$), where $X$ is an algebraic variety over $S$,
and construct $\CA_T$ as a certain triangulated subcategory in $\D^b(X\times_S T)$.
Once again there arises an issue of dependance on the chosen embedding $\CA \to \D^b(X)$,
and again we show that the result is independent of the choice.

The most important technical notion used in the paper is that of a semiorthogonal decomposition.
Actually, we start not with an admissible subcategory $\CA \subset \D^b(X)$ but with a semiorthogonal
decomposition $\D^b(X) = \lan \CA_1,\CA_2,\dots,\CA_m \ran$. Then we consider a chain of
triangulated categories $\D^\perf(X) \subset \D^b(X) \subset \D^-(X) \subset \D_{qc}(X)$
(here $\D^-(X)$ is the derived category of bounded above complexes with coherent cohomology)
and ask whether there exist semiorthogonal decompositions of these categories compatible
with the initial decomposition. It turns out that the categories $\CA_i^\perf = \CA_i \cap \D^\perf(X)$
always give a semiorthogonal decomposition of $\D^\perf(X)$, while the categories
$\HCA_i$ (the minimal triangulated subcategories of $\D_{qc}(X)$ containing $\CA_i^\perf$
and closed under arbitrary direct sums) and $\CA_i^- = \HCA_i \cap \D^-(X)$
always form semiorthogonal decompositions of $\D_{qc}(X)$ and $\D^-(X)$ respectively.
However, for compatibility of the last two decompositions with the initial decomposition of $\D^b(X)$,
we need a technical condition to be satisfied, namely the right cohomological
amplitude of the projection functors of the initial decomposition should be finite
(this condition holds automatically if $X$ is smooth).

Similarly, in a situation of a base change we start with a semiorthogonal decomposition
of $\D^b(X)$. However, here we need some additional assumptions from the very beginning.
First of all the decomposition of $\D^b(X)$ should be $S$-linear, and second,
the base change $\phi:T \to S$ should be faithful for the projection $f:X \to S$.
The latter condition more or less by definition (see~\cite{K1}) is equivalent to the base change isomorphism
$f_*\phi^* \cong \phi^*f_*$, where the projections of $X_T = X\times_S T$ to $X$ and $T$
by an abuse of notation are denoted by $\phi$ and $f$ respectively.

The semiorthogonal decomposition of $\D^b(X_T)$ is constructed in several steps. First, we consider
the semiorthogonal decomposition of $\D^\perf(X)$ constructed above. Then we define
the subcategory $\CA_{iT}^p$ of $\D^\perf(X_T)$ to be the closed under direct summands
triangulated subcategory generated by objects of the form $\phi^*F \otimes f^*G$ with $F \in \CA_i^\perf$
and $G \in \D^\perf(T)$. It turns out that acting this way we always obtain a semiorthogonal
decomposition of $\D^\perf(X_T)$. Further, we define the category $\HCA_{iT}$
to be the minimal triangulated subcategory of $\D_{qc}(X_T)$ containing $\CA_{iT}^p$ and closed
under arbitrary direct sums, and $\CA_{iT}^- = \HCA_{iT} \cap \D^-(X_T)$. Thus we obtain
semiorthogonal decompositions of $\D_{qc}(X_T)$ and $\D^-(X_T)$. Finally we consider subcategories
$\CA_{iT} = \CA_{iT}^- \cap \D^b(X_T) \subset \D^b(X_T)$. But to prove that they form
a semiorthogonal decomposition we again need the assumption of finiteness of
cohomological amplitude of the projection functors of the initial semiorthogonal
decomposition of $\D^b(X)$. We prove that the projection functors of the obtained
decomposition of $\D^b(X_T)$ also have finite cohomological amplitude.

We show that the constructed semiorthogonal decompositions of $\D_{qc}(X)$ and $\D_{qc}(X_T)$
are compatible with respect to the pushforward and the pullback functors via the projection
$\phi:X_T \to X$. It follows, that the semiorthogonal decompositions of $\D^b(X)$ and $\D^b(X_T)$
are compatible with respect to $\phi_*$ whenever $\phi$ is proper, and with respect to $\phi^*$
whenever $\phi$ has finite $\Tor$-dimension.

It should be mentioned, that seemingly too complicated procedure of constructing $\CA_{iT}$
is probably inevitable. The straightforward approach of taking for $\CA_{iT}$ the subcategory
of $\D^b(X_T)$ generated by objects of the form $\phi^*F \otimes f^*G$ with $F \in \CA_i$
and $G \in \D^b(T)$ doesn't give the desired result even when both $\phi$ and $f$ have finite $\Tor$-dimension.
Indeed, assume that $\CA_i = \D^b(X)$ and $X$ is smooth. Then $\D^b(X) = \D^\perf(X)$ and it is clear
that defined this way subcategory of $\D^b(X_T)$ is just the category of perfect complexes
$\D^\perf(X_T)$, not the whole $\D^b(X)$ as one would wish. So, one definitely needs to add
something to this category to obtain the right answer. It seems that to add all colimits
and then to intersect with $\D^b(X_T)$ is the simplest possible solution. And considering perfect
complexes as an intermediate step both removes many technical problems and gives an additional information.

As an application of the obtained results we prove the following. Assume that
$\D^b(X) = \lan \CA_1,\dots,\CA_m \ran$ is a semiorthogonal decomposition
the projection functors of which have finite cohomological amplitude.
We prove then that these functors are isomorphic to kernel functors $\Phi_{K_i}$
given by some explicit kernels $K_i \in \D^b(X\times X)$. In particular,
if $\CA \subset \D^b(X)$ is an admissible subcategory and the projection
functor to $\CA$ has finite cohomological amplitude then it is isomorphic
to a kernel functor. In a special case, when $\CA \cong \D^b(Y)$ for a smooth
projective variety $Y$ this follows from the Orlov's Theorem on representability
of fully faithful functors~\cite{O1}. Indeed, in this case the embedding functor
$\D^b(Y) \to \D^b(X)$ as well as its adjoint are given by appropriate kernels on $X\times Y$,
so the projection functor is given by the convolution of these kernels. Thus, our
result can be considered as a generalization of Orlov's Theorem.

The paper is organized as follows.
In Section~2 we remind the main technical notions used in the paper ---
semiorthogonal decompositions, cohomological amplitude, homotopy colimits e.t.c.
We also discuss several notions and facts related to approximation of unbounded
quasicoherent complexes by perfect ones.
%
In Section~3 we investigate when a semiorthogonal decomposition of a triangulated
category $\CT'$ induces a semiorthogonal decomposition of its full triangulated
subcategory $\CT \subset \CT'$.
In Section~4 we construct extensions of a semiorthogonal decomposition of $\D^b(X)$
to $\D^\perf(X) \subset \D^-(X) \subset \D_{qc}(X)$.
In Section~5 we define the base change for an admissible subcategory
and prove the faithful base change Theorem.
In Section~6 we show that extensions $\HCA$, $\CA^-$ and the base change $\CA_T$ of $\CA$
do not depend on the choice of $X$ and of the embedding $\CA \to \D^b(X)$ involved in the definitions.
In Section~7 we prove that the projection functors of a semiorthogonal decomposition
can be represented as kernel functors.

{\bf Acknowledgements:}
I would like to thank A.~Bondal, D.~Kaledin, D.~Orlov and L.~Positselski for very helpful discussions.
My special thanks to A.~Elagin who suggested a significant improvement of the previous version of this paper.

\section{Preliminaries}

\subsection{Notation}

All algebraic varieties are assumed to be quasiprojective.

For an algebraic variety $X$, we denote
by $\D^b(X)$ the bounded derived category of coherent sheaves on $X$,
by $\D^-(X)$ the bounded above derived category of coherent sheaves on $X$, and
by $\D_{qc}(X)$ the unbounded derived category of quasicoherent sheaves on $X$.
Recall that an object $F \in \D_{qc}(X)$ is a {\sf perfect complex}\/ if it is locally
quasiisomorphic to a bounded complex of locally free sheaves of finite rank.
Recall that perfect complexes are precisely {\sf compact objects} in $\D_{qc}(X)$, i.e.
if $P$ is perfect then
$$
\Hom(P,\oplus_\alpha F_\alpha) \cong \oplus_\alpha \Hom(P,F_\alpha)
$$
for any system $F_\alpha \in \D_{qc}(X)$.
We denote by $\D^\perf(X)$ the full subcategory of $\D_{qc}(X)$ consisting
of perfect complexes. Note that $\D^\perf(X)$ is a triangulated subcategory in $\D^b(X)$.
Given an object $F \in \D_{qc}(X)$ we denote by $\CH^i(F)$ the $i$-th cohomology sheaf of~$F$.

For $F,G \in \D_{qc}(X)$, we denote by $\RCHom(F,G)$ the local $\RCHom$-complex
and by $F \otimes G$ the derived tensor product. Similarly, for a map $f:X \to Y$,
we denote by $f_*:\D_{qc}(X) \to \D_{qc}(Y)$ the derived pushforward functor and by
$f^*:\D_{qc}(Y) \to \D_{qc}(X)$ the derived pullback functor. We refer to~\cite{KSch}
for the definition of these functors. We also denote by $f^!:\D_{qc}(Y) \to \D_{qc}(X)$
the right adjoint functor of $f_*$ (usually it is referred to as the twisted pullback functor).
It exists by~\cite{N2} (see also~\cite{KSch}). If the morphism $f$ is smooth then
$f^!(F) \cong f^*(F)\otimes\omega_{X/Y}[\dim X - \dim Y]$ again by~\cite{N2}.

Given a class $\CE$ of objects in a triangulated category $\CT$
we denote by $\lan\CE\ran$ the minimal strictly full triangulated
subcategory in $\CT$ containing all objects in $\CE$ and closed
under taking direct summands. We say that $\CE$ {\sf generates}\/ $\CT$
if $\CT = \lan\CE\ran$.

\subsection{Semiorthogonal decompositions}

Given a class $\CE$ of objects in a triangulated category $\CT$ we denote
the {\sf right}\/ and the {\sf left orthogonal}\/ to $\CE$ by
$$
\begin{array}{rcl}
\CE^\perp   &=& \{T \in \CT\ |\ \Hom(E[k],T) = 0\ \text{for all $E\in\CE$ and all $k \in \ZZ$}\},\\
{}^\perp\CE &=& \{T \in \CT\ |\ \Hom(T,E[k]) = 0\ \text{for all $E\in\CE$ and all $k \in \ZZ$}\}.
\end{array}
$$
It is clear that both $\CE^\perp$ and ${}^\perp\CE$ are triangulated subcategories in $\CT$ closed
under taking direct summands.
The classes $\CE_1,\CE_2 \subset \CT$ are called {\sf semiorthogonal}\/
if $\CE_1 \subset \CE_2^\perp$, or equivalently $\CE_2 \subset {}^\perp\CE_1$.

\begin{lemma}\label{lrso}
If classes $\CE_1$ and $\CE_2$ are semiorthogonal
then the subcategories $\lan\CE_1\ran$ and $\lan\CE_2\ran$ are semiorthogonal as well.
\end{lemma}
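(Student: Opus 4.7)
The plan is a standard two-step orthogonal-complement argument, using that right and left orthogonals are automatically triangulated subcategories closed under direct summands (which was just noted in the paragraph preceding the lemma).

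First I would fix an arbitrary object $E_1 \in \CE_1$ and consider the left orthogonal ${}^\perp\{E_1\}$ in $\CT$. By hypothesis, $\CE_2 \subset {}^\perp\CE_1 \subset {}^\perp\{E_1\}$. Since ${}^\perp\{E_1\}$ is a strictly full triangulated subcategory of $\CT$ closed under direct summands, and $\langle\CE_2\rangle$ is by definition the \emph{minimal} such subcategory containing $\CE_2$, we conclude $\langle\CE_2\rangle \subset {}^\perp\{E_1\}$. As $E_1 \in \CE_1$ was arbitrary, this shows $\langle\CE_2\rangle \subset {}^\perp\CE_1$, i.e.\ $\Hom(T_2, E_1[k]) = 0$ for every $T_2 \in \langle\CE_2\rangle$, every $E_1 \in \CE_1$, and every $k \in \ZZ$.

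Next I would dualize the argument: fix an arbitrary $T_2 \in \langle\CE_2\rangle$ and consider $\{T_2\}^\perp$. By the previous step, $\CE_1 \subset \{T_2\}^\perp$. Again $\{T_2\}^\perp$ is a strictly full triangulated subcategory closed under direct summands, so by minimality of $\langle\CE_1\rangle$ we get $\langle\CE_1\rangle \subset \{T_2\}^\perp$. Since $T_2$ was arbitrary, this gives $\langle\CE_1\rangle \subset \langle\CE_2\rangle^\perp$, which is exactly the semiorthogonality of $\langle\CE_1\rangle$ and $\langle\CE_2\rangle$.

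There is essentially no obstacle here; the only point to be careful about is that we really do need closure under direct summands in the definition of $\langle\cdot\rangle$ to match that both $\CE^\perp$ and ${}^\perp\CE$ have that property, so that the universal (minimality) argument applies. The rest is a purely formal bootstrap: extend semiorthogonality from the generators to the generated subcategory first on one side, then on the other.
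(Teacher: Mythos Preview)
Your proof is correct and is essentially the same argument as the paper's: both use that orthogonals are triangulated and closed under direct summands, so the minimality of $\langle\cdot\rangle$ forces the inclusion, first on one side, then on the other. The only difference is cosmetic---the paper extends $\CE_1$ first (via $\CE_1 \subset \CE_2^\perp \Rightarrow \langle\CE_1\rangle \subset \CE_2^\perp$) whereas you extend $\CE_2$ first, and the paper works with the whole class at once rather than fixing a single element.
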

\begin{proof}
We have $\CE_1 \subset \CE_2^\perp$, hence $\lan\CE_1\ran \subset \CE_2^\perp$,
hence $\CE_2 \subset {}^\perp\lan\CE_1\ran$, hence $\lan\CE_2\ran \subset {}^\perp\lan\CE_1\ran$.
\end{proof}

\begin{definition}[\cite{BK,BO1,BO2}]
A {\sf semiorthogonal decomposition}\/ of a triangulated category $\CT$ is a sequence
of full triangulated subcategories
$\CA_1,\dots,\CA_m$ in $\CT$ such that $\CA_i \subset \CA_j^\perp$ for $i < j$
and for every object $T \in \CT$ there exists a chain of morphisms
$0 = T_m \to T_{m-1} \to \dots \to T_1 \to T_0 = T$ such that
the cone of the morphism $T_k \to T_{k-1}$ is contained in $\CA_k$
for each $k=1,2,\dots,m$. In other words, there exists a diagram
\begin{equation}\label{tower}
\vcenter{
\xymatrix@C-7pt{
0 \ar@{=}[r] & T_m \ar[rr]&& T_{m-1} \ar[dl] \ar[rr]&& \quad\dots\quad \ar[rr]&& T_2 \ar[rr]&& T_1 \ar[dl] \ar[rr]&& T_0 \ar[dl] \ar@{=}[r] & T \\
&& A_m \ar@{..>}[ul] &&& \dots &&& A_2 \ar@{..>}[ul]&& A_1 \ar@{..>}[ul]&&
}}
\end{equation}
where all triangles are distinguished (dashed arrows have degree $1$) and $A_k \in \CA_k$.
\end{definition}

Thus, every object $T\in\CT$ admits a decreasing ``filtration''
with factors in $\CA_1$, \dots, $\CA_m$ respectively.

\begin{lemma}\label{ff}
If $\CT = \lan \CA_1, \dots, \CA_m \ran$ is a semiorthogonal decomposition and $T \in \CT$
then the diagram~\eqref{tower} for $T$ is unique and functorial (for any morphism $T \to T'$
there exists a unique collection of morphisms $T_i \to T'_i$, $A_i \to A'_i$ combining into
a morphism of diagram~\eqref{tower} for $T$ into diagram~\eqref{tower} for $T'$).
\end{lemma}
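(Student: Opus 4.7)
The plan is to identify each $T_k$ intrinsically as the universal approximation of $T$ from the subcategory $\lan \CA_{k+1}, \dots, \CA_m \ran$, and then to deduce both uniqueness and functoriality from this universal property.

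First, I would extract two structural facts about any filtration~\eqref{tower}. Reverse induction on $k$ starting from $T_m = 0$, using the triangle $T_{k+1} \to T_k \to A_{k+1}$, gives $T_k \in \lan \CA_{k+1}, \dots, \CA_m \ran$. Forward induction on $k$, applying the octahedral axiom to the composition $T_k \to T_{k-1} \to T_0 = T$, shows that the cone $C_k$ of the map $T_k \to T$ lies in $\lan \CA_1, \dots, \CA_k \ran$. Iterating Lemma~\ref{lrso} on the pairwise semiorthogonal classes $\CA_i, \CA_j$ (for $i < j$) then yields the key vanishing $\Hom(U, C_k[l]) = 0$ for all $U \in \lan \CA_{k+1}, \dots, \CA_m \ran$ and all $l \in \ZZ$.

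Feeding this into the long exact sequence obtained by applying $\Hom(U, -)$ to $T_k \to T \to C_k$, I get a bijection $\Hom(U, T_k) \cong \Hom(U, T)$ for every $U \in \lan \CA_{k+1}, \dots, \CA_m \ran$; this is the universal property I am after. Taking $U = T'_k$ for a second filtration of the same $T$ and comparing in both directions yields a canonical isomorphism $T_k \cong T'_k$ over $T$, which settles uniqueness of the $T_k$.

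For functoriality and for uniqueness of the $A_k$: given a morphism $f : T \to T'$, I would inductively lift the composition $T_k \to T \to T'$ uniquely to $f_k : T_k \to T'_k$ via the universal property applied to $T'$. Compatibility with the transitions $T_k \to T_{k-1}$ reduces to uniqueness of the lift to $T'_{k-1}$: the two candidate arrows $T_k \to T_{k-1} \to T'_{k-1}$ and $T_k \to T'_k \to T'_{k-1}$ both cover the morphism $T_k \to T'$, so they coincide. The induced morphisms $A_k \to A'_k$ are then produced by TR3, and their uniqueness follows from $\Hom(T_k[1], A'_k) = 0$, a final instance of semiorthogonality between $\lan \CA_{k+1}, \dots, \CA_m \ran$ and $\CA_k$. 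No deeper obstacle is anticipated; the entire proof is a careful bookkeeping of semiorthogonalities, and the octahedral axiom together with Lemma~\ref{lrso} carries the argument.
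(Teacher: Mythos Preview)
Your proof is correct and follows essentially the same approach as the paper's: both rest on the vanishing $\Hom(T_k, A'_k[l]) = 0$ coming from $T_k \in \lan \CA_{k+1},\dots,\CA_m\ran$ and semiorthogonality, and both proceed by induction along the tower. The only difference is packaging --- you phrase the key step as a universal property of $T_k$ (and invoke the octahedral axiom to identify the cone $C_k$), while the paper works directly with the single triangle $T_k \to T_{k-1} \to A_k$ at each stage, which is a bit more economical but amounts to the same argument.
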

\begin{proof}
Note that $T_1 \in \lan \CA_2, \dots, \CA_m \ran$ by~\eqref{tower}.
It follows from the semiorthogonality that $\Hom(T_1,A'_1[k]) = 0$ for all $k \in \ZZ$.
Therefore any map $T_0 = T \to T' = T'_0$ extends in a unique way to a map of the triangle
$T_1 \to T_0 \to A_1$ into the triangle $T'_1 \to T'_0 \to A'_1$. In particular, we obtain
a map $T_1 \to T'_1$ as well as a map $A_1 \to A'_1$ and proceed by induction.
\end{proof}

We denote by $\alpha_k:\CT \to \CT$ the functor $T \mapsto A_k$.
We call $\alpha_k$ the {\sf $k$-th projection functor}\/ of the semiorthogonal decomposition.

\begin{definition}[\cite{BK,B}]
A full triangulated subcategory $\CA$ of a triangulated category $\CT$ is called
{\sf right admissible}\/ if for the inclusion functor $i:\CA \to \CT$ there is
a right adjoint $i^!:\CT \to \CA$, and
{\sf left admissible}\/ if there is a left adjoint $i^*:\CT \to \CA$.
Subcategory $\CA$ is called {\sf admissible}\/ if it is both right and left admissible.
\end{definition}

\begin{lemma}[\cite{B}]\label{sod_adm}
If $\CT = \lan\CA,\CB\ran$ is a semiorthogonal decomposition then
$\CA$ is left admissible and $\CB$ is right admissible.
Conversely, if $\CA \subset \CT$ is left admissible then
$\CT = \lan \CA,{}^\perp\CA \ran$ is a semiorthogonal decomposition, and
if $\CB \subset \CT$ is right admissible then
$\CT = \lan \CB^\perp,\CB \ran$ is a semiorthogonal decomposition.
\end{lemma}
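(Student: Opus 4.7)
The plan is to prove both implications by explicitly constructing either the adjoint functors or the decomposition triangles, using the projection functors or the unit/counit of adjunction respectively.

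For the forward direction, assume $\CT = \lan\CA,\CB\ran$ is a semiorthogonal decomposition. I would use Lemma~\ref{ff}: for each $T \in \CT$ there is a unique functorial triangle
$$ T_1 \to T \to A_1 $$
with $T_1 \in \CB$ and $A_1 \in \CA$. I claim that the assignment $T \mapsto A_1$ is left adjoint to the inclusion $i:\CA \hookrightarrow \CT$, and $T \mapsto T_1$ is right adjoint to the inclusion $j:\CB \hookrightarrow \CT$. To check the first, for any $A' \in \CA$ apply $\Hom(-,A')$ to the triangle; since $\CA \subset \CB^\perp$ means $\CB \subset {}^\perp\CA$, we have $\Hom(T_1,A'[k]) = 0$ for all $k$, so $\Hom(A_1,A') \xrightarrow{\sim} \Hom(T,A')$. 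This is exactly the adjunction, with the map $T \to A_1$ serving as the unit. The argument for $\CB$ is symmetric, applying $\Hom(B',-)$ to the triangle and using $\CA \subset \CB^\perp$.

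For the converse, assume $\CA$ is left admissible with left adjoint $i^*$ to the inclusion $i:\CA \to \CT$, and let $\eta:\id_\CT \to i\circ i^*$ be the unit. For each $T\in\CT$, define $A_1 := i(i^*T)$ and $T_1 := \Cone(\eta_T)[-1]$, giving a distinguished triangle $T_1 \to T \to A_1$ with $A_1 \in \CA$. I would verify that $T_1 \in {}^\perp\CA$: for any $A \in \CA$ and any $k$, the induced map $\Hom(A_1,A[k]) \to \Hom(T,A[k])$ is identified via adjunction with the identity on $\Hom(i^*T, i^*(i(A[k]))) \cong \Hom(i^*T, A[k])$ (using that $i^*i = \id_\CA$ for a fully faithful embedding with left adjoint), hence it is an isomorphism, so $\Hom(T_1,A[k]) = 0$. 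It remains to check that $\CA \subset ({}^\perp\CA)^\perp$, but this is immediate from the definition of ${}^\perp\CA$: if $B \in {}^\perp\CA$ and $A \in \CA$, then $\Hom(B[k],A) = 0$ for all $k$. This gives the required semiorthogonal decomposition $\CT = \lan\CA,{}^\perp\CA\ran$. The right-admissible case is completely symmetric: use the counit $\varepsilon: j \circ j^! \to \id$ of the inclusion $j:\CB \to \CT$, set $B_1 := j(j^!T)$ and $T^1 := \Cone(\varepsilon_T)$, and obtain a triangle $B_1 \to T \to T^1$ with $B_1 \in \CB$ and $T^1 \in \CB^\perp$.

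There is no real obstacle here; the only subtlety is keeping track of the direction of the orthogonality conventions and the fact that the inclusion of an admissible subcategory is automatically fully faithful, so the unit $A \to i^*i(A)$ and counit $j^!j(B) \to B$ are isomorphisms — this is what lets one conclude that the connecting maps fit into the correct orthogonal in each case.
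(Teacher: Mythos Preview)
Your argument is correct and is the standard one. Note, however, that the paper does not actually supply a proof of this lemma: it is stated with a citation to~\cite{B} and no proof environment follows. So there is nothing in the paper to compare against; your write-up is essentially the proof one finds in Bondal's original paper, specialized to the two-term case. The only point worth tightening is the sentence ``identified via adjunction with the identity'': what you are invoking is precisely one of the triangle identities for the adjunction $(i^*,i)$, together with full faithfulness of $i$ (equivalently, that the counit $i^*i \to \id_\CA$ is an isomorphism). Making that explicit removes any ambiguity.
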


\begin{definition}\label{strongsod}
We will say that a semiorthogonal decomposition $\CT = \lan \CA_1, \dots, \CA_m \ran$
is a {\sf strong semiorthogonal decomposition}\/ if for each $k$ the category
$\CA_k$ is admissible in $\lan \CA_k, \dots, \CA_m \ran$.
\end{definition}

Note that $\CA_k$ is left admissible in $\lan \CA_k, \dots, \CA_m \ran$ by Lemma~\ref{sod_adm}.
So the additional condition in the definition is the right admissibility.
Note also that if $\CA_k$ is right admissible in $\CT$ then it is also admissible in $\lan \CA_k, \dots, \CA_m \ran$
(thus a semiorthogonal decomposition with admissible components is a strong semiorthogonal decomposition),
and that in the case when $\CT = \D^b(X)$ with $X$ being smooth and projective any semiorthogonal
decomposition is strong.

\subsection{$S$-linearity}

Let $f:X \to S$ be a morphism of algebraic varieties.
A triangulated subcategory $\CA \subset \D_{qc}(X)$ is called {\sf $S$-linear}\/ (see \cite{K1})
if it is stable with respect to tensoring by pullbacks of perfect complexes on $S$.
In other words, if $A \otimes f^* F \in \CA$ for any $A \in \CA$, $F \in \D^\perf(S)$.

\begin{lemma}\label{slinso}
A pair of $S$-linear subcategories $\CA,\CB \subset \D_{qc}(X)$ is semiorthogonal if and only if
the equality $f_*\RCHom(B,A) = 0$ holds for any $A \in \CA$, $B \in \CB$.
\end{lemma}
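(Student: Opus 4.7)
The proof plan is to interpret both sides using adjunctions so that both become the vanishing of a single $\Hom$ space, and then to invoke compact generation of $\D_{qc}(S)$ to pass between the "test by $\CO_S$" statement and the "test by all perfect complexes" statement.

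Concretely, for any $A\in\CA$, $B\in\CB$ and $F\in\D^\perf(S)$ the tensor--hom adjunction together with the $(f^*,f_*)$ adjunction yields a chain of isomorphisms
$$
\Hom(B\otimes f^*F,\,A[k]) \;\cong\; \Hom(f^*F,\,\RCHom(B,A)[k]) \;\cong\; \Hom(F,\,f_*\RCHom(B,A)[k]).
$$
This is the single identity on which the whole argument rests.

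For the "if" direction, assume $f_*\RCHom(B,A)=0$. Specialising $F=\CO_S$ in the identity above gives $\Hom(B,A[k])=0$ for every $k\in\ZZ$, which is precisely the semiorthogonality of $\CA$ and $\CB$. Note that this direction does not use $S$-linearity.

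For the "only if" direction, the $S$-linearity of $\CB$ ensures that $B\otimes f^*F\in\CB$ for every $F\in\D^\perf(S)$, hence the semiorthogonality of $\CA$ and $\CB$ forces the left-hand side of the displayed identity to vanish for all $k$ and all perfect $F$. Therefore $\Hom(F,\,f_*\RCHom(B,A)[k])=0$ for every $F\in\D^\perf(S)$ and every $k\in\ZZ$. Since $\D_{qc}(S)$ is compactly generated with the subcategory of compact objects equal to $\D^\perf(S)$ (this is the fact recalled in the Notation subsection), an object of $\D_{qc}(S)$ against which all Homs from perfect complexes and all their shifts vanish must itself be zero. Hence $f_*\RCHom(B,A)=0$, as claimed.

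The only nontrivial input is the compact generation of $\D_{qc}(S)$ by perfect complexes, which is standard for quasiprojective varieties and is stated in the Preliminaries. Apart from this, the argument is a purely formal manipulation with adjunctions, so I do not anticipate any real obstacle.
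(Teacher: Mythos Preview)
Your proof is correct and follows essentially the same route as the paper: both use the adjunction chain $\Hom(B\otimes f^*F,A)\cong\Hom(F,f_*\RCHom(B,A))$ and then the fact that an object of $\D_{qc}(S)$ receiving no nonzero maps from perfect complexes must vanish. The only difference is that the paper does not cite compact generation as a known fact but instead proves the detection statement directly by an explicit construction (mapping a shifted locally free sheaf of finite rank nontrivially into any nonzero complex); indeed, the paper later refers back to this very argument when establishing compact generation in Lemma~\ref{perf-gen-qc}, so strictly speaking the fact you invoke is not yet available from the Notation subsection as you claim.
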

\begin{proof}
First we note that for any object $0 \ne G \in \D_{qc}(S)$ there exists a nonzero map $P \to G$
from a perfect complex $P \in \D^\perf(S)$. Indeed, represent $G$ by a complex of quasicoherent
sheaves and assume that $\CH^i(G) \ne 0$. Let $Z^i = \Ker(G^i \to G^{i+1})$ so that we have an
epimorphism $Z^i \to \CH^i(G)$. It is clear that there exists a locally free sheaf $P$ of finite rank and a map
$P \to Z^i$ such that the composition $P \to Z^i \to \CH^i(G)$ is nonzero. Then the composition
$P \to Z^i \subset G^i$ induces the required morphism $P[-i] \to G$ (it is nonzero since
the induced morphism of the cohomology $\CH^i(P[-i]) = P \to \CH^i(G)$ is nonzero).

Further note that $\RHom(P,f_*\RCHom(B,A)) \cong \RHom(f^*P,\RCHom(B,A)) \cong \RHom(B\otimes f^*P,A)$
for any $P \in \D^\perf(S)$. So, if $\CA$ and $\CB$ are semiorthogonal then $\RHom(B\otimes f^*P,A) = 0$
since $\CB$ is $S$-linear and the above observation shows that $f_*\RCHom(B,A) = 0$. The inverse
is evident.
\end{proof}

Let $f:X \to S$ and $g:Y \to S$ be algebraic morphisms, and
assume that $\CA \subset \D_{qc}(X)$, $\CB \subset \D_{qc}(Y)$ are $S$-linear
triangulated subcategories. A functor $\Phi:\CA \to \CB$ is called {\sf $S$-linear}\/
if there is given a functorial isomorphism $\Phi(F\otimes f^*G) \cong \Phi(F) \otimes g^*G$
for all $F \in \CA$, $G \in \D^\perf(S)$.

\begin{lemma}\label{prfsl}
If\/ $\CT \subset \D_{qc}(X)$ is an $S$-linear triangulated subcategory
and $\CT = \lan \CA_1, \dots, \CA_m \ran$ is an $S$-linear semiorthogonal
decomposition then its projection functors $\alpha_i:\CT \to \CT$ are $S$-linear.
\end{lemma}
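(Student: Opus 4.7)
The plan is to use the uniqueness and functoriality of the semiorthogonal filtration (Lemma~\ref{ff}) together with the fact that tensoring with the pullback of a perfect complex is an exact functor that preserves each $\CA_i$.

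More precisely, fix $F \in \CT$ and $G \in \D^\perf(S)$. By the semiorthogonal decomposition, $F$ admits a canonical tower as in~\eqref{tower}:
\[
0 = F_m \to F_{m-1} \to \dots \to F_1 \to F_0 = F,
\]
whose $k$-th cone is $A_k = \alpha_k(F) \in \CA_k$. Since $-\otimes f^*G$ is a triangulated endofunctor of $\D_{qc}(X)$ (it is the derived tensor product with a perfect complex, hence exact), applying it to the whole tower yields a diagram of the same shape
\[
0 = F_m \otimes f^*G \to \dots \to F_1 \otimes f^*G \to F_0 \otimes f^*G = F \otimes f^*G,
\]
with distinguished triangles whose third vertices are $A_k \otimes f^*G$. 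By the $S$-linearity of each $\CA_k$ we have $A_k \otimes f^*G \in \CA_k$. Therefore this is a valid tower of type~\eqref{tower} for the object $F \otimes f^*G$ with respect to the decomposition $\CT = \langle \CA_1, \dots, \CA_m \rangle$ (the $S$-linearity of $\CT$ itself ensures that $F\otimes f^*G$ lies in $\CT$, so the statement makes sense).

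By Lemma~\ref{ff}, the canonical tower for $F\otimes f^*G$ is unique (up to unique isomorphism) and functorial. Hence the $k$-th cone of the tower constructed above must be canonically isomorphic to $\alpha_k(F \otimes f^*G)$; that is,
\[
\alpha_k(F \otimes f^*G) \cong \alpha_k(F) \otimes f^*G.
\]
To upgrade this to an $S$-linear structure on $\alpha_k$, one needs the isomorphism to be functorial in both $F$ and $G$; this is immediate from the functoriality clause of Lemma~\ref{ff}, applied to morphisms $F \to F'$ and to morphisms $G \to G'$ (the latter induces morphisms of towers since $-\otimes f^*(-)$ is bifunctorial).

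The potentially subtle point is really just a bookkeeping one: one must verify that tensoring with $f^*G$ sends a distinguished triangle in $\D_{qc}(X)$ to a distinguished triangle (exactness of the derived tensor product with a perfect complex), and that the functorial isomorphism we obtain is compatible with the coherence data in the definition of $S$-linear functor. Neither presents any real obstacle; the proof is essentially an application of Lemma~\ref{ff}.
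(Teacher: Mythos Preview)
Your proof is correct and is essentially the same as the paper's: the paper applies Lemma~\ref{cc} to the endofunctor $\Phi = -\otimes f^*G$ (which preserves each $\CA_i$ by $S$-linearity), and the proof of Lemma~\ref{cc} is exactly the tower-tensoring-plus-uniqueness argument you wrote out in full. So you have unpacked Lemma~\ref{cc} inline rather than citing it.
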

\begin{proof}
Take any $G \in \D^\perf(S)$ and consider the endofunctor of $\CT$
given by tensoring with $f^*G$. It preserves all $\CA_i$ hence by Lemma~\ref{cc} below
it commutes with the projection functors. This gives the required functorial isomorphism.
\end{proof}

\subsection{Faithful base changes}\label{ssfbc}

Let $f:X \to S$ and $\phi:T \to S$ be algebraic morphisms.
Let $X_T = X\times_T S$ be the fiber product. By an abuse of notation
denote the projections $X_T \to T$ and $X_T \to X$ also by $f$ and $\phi$ respectively.
It is easy to see that there is a canonical morphism of functors $\phi^*f_* \to f_*\phi^*$.
Recall that the cartesian square
$$
\xymatrix{
X_T \ar[r]^\phi \ar[d]_f & X \ar[d]^f \\ T \ar[r]^\phi & S
}
$$
is called {\sf exact}\/ (see~\cite{K1}) if this morphism of functors
is an isomorphism. By \cite{K1} the square is exact if either $f$ or $\phi$ is flat,
and the square is exact if and only if the transposed square is exact.

A map $\phi:T \to S$ considered as a change of base is called
{\sf faithful for $f:X \to S$} (see~\cite{K1}) if the corresponding cartesian square is exact.
Thus any change of base is faithful for a flat $f$ and similarly a flat change of base
is faithful for any $f$.

\subsection{Truncations}

Given a complex $C^\bullet$ its {\sf stupid truncations}\/ are defined as
$$
(\sigma^{\le m} C)^n =
\begin{cases}
C^n, & \text{if $n \le m$}\\
0, & \text{if $n > m$}
\end{cases}
\qquad\text{and}\qquad
(\sigma^{\ge m} C)^n =
\begin{cases}
C^n, & \text{if $n \ge m$}\\
0, & \text{if $n < m$}
\end{cases}
$$
It is clear that $\sigma^{\ge m}C \to C \to \sigma^{\le m-1}C$ is a distinguished triangle
in the derived category. The advantage of the stupid truncations which we will use
subsequently in the paper is that when applied to a complex of locally free sheaves
(a perfect complex) they produce a perfect complex as well.

Similarly, the {\sf canonical truncations}\/ (also known as {\sf smart truncations}) are defined  as
$$
(\tau^{\le m} C)^n =
\begin{cases}
C^n, & \text{if $n < m$}\\
\Ker (d: C^{m} \to C^{m+1}), & \text{if $n = m$}\\
0, & \text{if $n > m$}
\end{cases}
\qquad
(\tau^{\ge m} C)^n =
\begin{cases}
C^n, & \text{if $n > m$}\\
\Coker(d: C^{m-1} \to C^m), & \text{if $n = m$}\\
0, & \text{if $n < m$}
\end{cases}
$$
Again, in the derived category we have a distinguished triangle
$\tau^{\le m}C \to C \to \tau^{\ge m+1}C$. The advantage of the canonical truncations
is that they descend to functors on the derived category.
Note also that
$$
\CH^n(\tau^{\le m} C) \cong
\begin{cases}
\CH^n(C), & \text{if $n \le m$}\\
0, & \text{if $n > m$}
\end{cases}
\qquad\text{and}\qquad
\CH^n(\tau^{\ge m} C) \cong
\begin{cases}
\CH^n(C), & \text{if $n \ge m$}\\
0, & \text{if $n < m$}
\end{cases}
$$

\subsection{Cohomological amplitude}

Let $\D^{[p,q]}_{qc}(X)$ denote the full subcategory of $\D_{qc}(X)$
consisting of all complexes $F \in \D_{qc}(X)$ with $\CH^i(F) = 0$ for $i \not\in [p,q]$.
Let $\CT \subset \D_{qc}(X)$ be a triangulated subcategory.
We say that $(a,b)$ is the {\sf cohomological amplitude} of a triangulated functor $\Phi:\CT \to \D_{qc}(Y)$ if
$$
\Phi(\CT \cap \D^{[p,q]}_{qc}(X)) \subset \D^{[p+a,q+b]}_{qc}(Y)
$$
for all $p,q\in\ZZ$. In particular, we say that $\Phi$ has finite left (resp.\ right) cohomological amplitude
if $a > -\infty$ (resp.\ $b < \infty$). If both $a$ and $b$ are finite we say that $\Phi$ has finite cohomological
amplitude.

\begin{lemma}\label{fca}
Every exact functor $\Phi:\D^\perf(X) \to \D_{qc}(Y)$ has finite cohomological amplitude.
\end{lemma}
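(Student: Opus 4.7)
The plan is to express every perfect complex in $\D^{[p,q]}_{qc}(X)$ as a controlled iterated cone of shifts of a fixed classical generator of $\D^\perf(X)$ and then to invoke the triangulated structure of $\Phi$. First, I would use that since $X$ is quasiprojective, there exists an ample line bundle $\CO_X(1)$ and an integer $N$ such that $G = \bigoplus_{k=0}^{N}\CO_X(-k)$ classically generates $\D^\perf(X)$, i.e.\ $\D^\perf(X) = \lan G\ran$. By Rouquier's theorem on the finite dimension of $\D^\perf(X)$ for quasiprojective $X$, there is a uniform integer $d$, depending only on $X$, such that every $F\in \D^\perf(X)$ is a direct summand of an object built from $G$ by at most $d$ iterated cones.

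Next I would refine the generation statement with cohomological bookkeeping: by induction on the number of cones one shows that for $F\in \D^\perf(X)\cap \D^{[p,q]}_{qc}(X)$ the shifts $G[j]$ appearing in the construction can be chosen so that $j$ lies in an interval of length $(q-p)+C$, where $C$ depends only on the cohomological range of $G$ and on $d$. The reason is that at each inductive step the cohomological window of the partial construction constrains, up to a fixed shift, the range of shifts of $G$ admissible at the next step.

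Finally, since $\Phi(G)\in\D_{qc}(Y)$ is a single object, its cohomology is concentrated in some bounded interval $[A,B]$, and $\Phi(F)$ is a direct summand of an object obtained from the shifts $\Phi(G)[j]$ by $d$ cones, with $j$ in the controlled range above. Because each cone enlarges the cohomological window by at most one on each side, $\Phi(F)$ lies in $\D^{[p+a,q+b]}_{qc}(Y)$ for constants $a,b$ depending only on $A$, $B$, $d$ and $C$, giving the required finite amplitude $(a,b)$.

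The main obstacle is the refined generation statement of the second step: the cohomological bookkeeping that links the shifts of $G$ appearing in the iterated construction of $F$ to the cohomological range $[p,q]$. The rest of the argument is formal, being a consequence of the triangulated structure of $\Phi$ together with the fact that $\Phi(G)$, as a single object of $\D_{qc}(Y)$, has bounded cohomology.
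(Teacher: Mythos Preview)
Your plan has the right shape, and you have correctly isolated the only nontrivial point: the ``shift bookkeeping'' of your second step. But as written this step is a genuine gap. Rouquier's theorem, taken as a black box, only produces a uniform bound $d$ on the number of cones; it says nothing about which shifts of $G$ occur. Your inductive heuristic (``the cohomological window of the partial construction constrains the next shift'') does not go through: one is free to build $F$ as a summand of an object with arbitrarily large cohomological range, using wildly shifted copies of $G$, and the fact that the summand $F$ happens to lie in $\D^{[p,q]}$ does not a posteriori bound those shifts. So the shift control must come from a \emph{specific} construction, not from the abstract existence of $d$.

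The paper simply points to \cite{K3}, Proposition~2.5, whose argument is exactly such a specific construction and therefore sidesteps your obstacle. One takes a very ample line bundle giving $i\colon X\hookrightarrow\PP^N$ and uses the Beilinson resolution of the diagonal on $\PP^N$ (restricted along the first factor to $X$). This produces, for every $F$, a functorial filtration whose graded pieces are of the form $R\Gamma(X,F\otimes E_k)\otimes \CO_X(-k)$ with $0\le k\le N$ and $E_k$ fixed; the cohomological range of the vector-space factor is $[p,q]$ shifted by a constant depending only on $X$. This is precisely the ``shift-controlled generation'' you need, obtained directly rather than via Rouquier. In the smooth case every $F\in\D^b(X)$ is perfect and the argument applies verbatim; the remark in the present paper is that when $X$ is singular the same argument still works for $F\in\D^\perf(X)$, because perfectness of $F$ guarantees that only a bounded piece of the (now possibly infinite) resolution of the diagonal on $X\times X$ contributes, the bound depending only on $\dim X$ via an Auslander--Buchsbaum type estimate on Tor-amplitude.

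In short: your strategy is sound, but to complete it you would have to open up the proof of strong generation and extract the shift bounds---at which point you are essentially carrying out the argument of \cite{K3} anyway, and the appeal to Rouquier's theorem becomes superfluous.
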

\begin{proof}
The same as in \cite{K3}, Proposition 2.5. A smoothness of $X$ is not required since
we consider only perfect complexes on $X$.
\end{proof}

Let $X$ and $Y$ be algebraic varieties. Consider the product $X\times Y$ and let $p:X\times Y \to X$
and $q:X\times Y \to Y$ be the projections.
Recall (see~\cite{K1}, 10.39) that an object $K \in \D^b(X\times Y)$ {\sf has finite $\Tor$-amplitude over $X$}\/
if the functor $F \mapsto K \otimes p^*F$ has finite cohomological amplitude.
Similarly, an object $K \in \D^b(X\times Y)$ {\sf has finite $\Ext$-amplitude over $Y$}\/
if the functor $F \mapsto \RCHom(K,q^!F)$ has finite cohomological amplitude.

\begin{lemma}\label{ftefca}
If $K \in \D^b(X\times Y)$ has finite $\Tor$-amplitude over $X$ then the functor $\Phi_K(F) = q_*(K\otimes p^*F)$
has finite cohomological amplitude. Similarly, if $K \in \D^b(X\times Y)$ has finite $\Ext$-amplitude over $Y$
then the functor $\Phi^!_K(G) = q_*\RCHom(K,q^!G)$ has finite cohomological amplitude.
\end{lemma}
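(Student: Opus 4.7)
The plan is to exhibit each of $\Phi_K$ and $\Phi_K^!$ as a composition of two functors each of which has finite cohomological amplitude, and then to invoke the elementary observation that such a composition again has finite cohomological amplitude: if $\Psi:\CT\to\CT'$ has amplitude $(a,b)$ and $\Psi':\CT'\to\D_{qc}(Y)$ has amplitude $(c,d)$, then $\Psi'\circ\Psi$ has amplitude $(a+c,b+d)$.

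For the first statement I write $\Phi_K = q_*\circ\Psi_K$, where $\Psi_K(F) := K\otimes p^*F$. The functor $\Psi_K$ has finite cohomological amplitude by the very definition of $K$ having finite $\Tor$-amplitude over $X$. It therefore remains to observe that the pushforward $q_*:\D_{qc}(X\times Y)\to\D_{qc}(Y)$ itself has finite cohomological amplitude. Since $X$ is quasiprojective it admits a finite affine open cover $\{U_1,\dots,U_N\}$; its pullback $\{U_i \times Y\}$ is an affine open cover of $X\times Y$ relative to $Y$, and the associated \v{C}ech-type resolution, combined with exactness of pushforward along an affine morphism, bounds the right cohomological amplitude of $q_*$ by $N-1$. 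The left cohomological amplitude is at most $0$ since $q_*$ is left $t$-exact. The composition is therefore of finite cohomological amplitude, as desired.

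The second statement is completely parallel: decompose $\Phi_K^! = q_*\circ\Theta_K$ with $\Theta_K(G) := \RCHom(K,q^!G)$, which has finite cohomological amplitude by the assumption that $K$ has finite $\Ext$-amplitude over $Y$, and then compose with $q_*$ just as in the previous paragraph. There is no real obstacle here; the only slightly nontrivial ingredient is the boundedness of the amplitude of $q_*$, which is a standard consequence of the quasiprojectivity of $X$ (and is essentially the same input that underlies Lemma~\ref{fca}).
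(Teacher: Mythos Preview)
Your proof is correct and follows exactly the paper's approach: decompose each functor as the inner step (finite amplitude by definition of finite $\Tor$/$\Ext$-amplitude) followed by a pushforward, and observe that the pushforward has finite cohomological amplitude. The only cosmetic difference is that the paper bounds the amplitude of the pushforward by $(0,d)$ with $d$ the maximal fiber dimension, whereas you use a \v{C}ech cover to get $(0,N-1)$; both justifications are standard and interchangeable.
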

\begin{proof}
It suffices to note that the pushforward functor has finite cohomological amplitude
(it is equal to $(0,d)$ where $d$ is the maximum of the dimensions of fibers).
\end{proof}

\subsection{Homotopy colimits}\label{sshocolim}

Recall (see~\cite{BN}) the definition of homotopy colimits in triangulated categories.
Let $F_1 \to F_2 \to F_3 \to \dots$ be a sequence of objects of a triangulated category having countable direct sums.
Its {\sf homotopy colimit}, $\hocolim F_i$, is defined as a cone of the canonical morphism
$\xymatrix@1{\oplus F_i \ar[rr]^{\id-\text{\sf shift}} && \oplus F_i}$,
where $\text{\sf shift}$ denotes the map $\oplus F_i \to \oplus F_i$ defined on $F_i$ as
the composition $F_i \to F_{i+1} \subset \oplus F_j$.
Thus we have a distinguished triangle
$$
\xymatrix@1{\bigoplus F_i\ \ar[rr]^{\id-\text{\sf shift}} && \ \bigoplus F_i\ \ar[rr] && \ \hocolim F_i}.
$$
In what follows we only consider homotopy colimits over the set of positive integers.
Colimits over other partially ordered sets are not considered at all.



\begin{lemma}\label{hclc}
If a functor $\Phi$ commutes with countable direct sums, that is the canonical
morphism $\oplus_i \Phi(F_i) \to \Phi(\oplus_i F_i)$ is an isomorphism then
$\Phi$ commutes with homotopy colimits in the sense that there is
a noncanonical isomorphism $\hocolim\Phi(F_i) \cong \Phi(\hocolim F_i)$.
In particular, homotopy colimits commute with tensor products, pullbacks and pushforwards.
\end{lemma}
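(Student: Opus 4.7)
The plan is to run the defining triangle of the homotopy colimit through the functor $\Phi$ and identify the result with the defining triangle of $\hocolim\Phi(F_i)$. By definition of $\hocolim F_i$, we have a distinguished triangle
$$
\bigoplus_i F_i \xrightarrow{\id-\text{\sf shift}} \bigoplus_i F_i \to \hocolim F_i.
$$
Since $\Phi$ is a triangulated functor, applying it preserves this triangle. Using the assumed isomorphism $\bigoplus_i \Phi(F_i) \cong \Phi(\bigoplus_i F_i)$, we obtain a distinguished triangle whose third vertex is $\Phi(\hocolim F_i)$ and whose first map, as I explain below, coincides with the shift map for the sequence $\Phi(F_i)$. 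By uniqueness (up to noncanonical isomorphism) of the cone, this identifies $\Phi(\hocolim F_i)$ with $\hocolim\Phi(F_i)$.

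The only nontrivial point is to check that, under the canonical isomorphism $\Phi(\bigoplus_i F_i) \cong \bigoplus_i \Phi(F_i)$, the morphism $\Phi(\id-\text{\sf shift})$ is identified with $\id - \text{\sf shift}'$, where $\text{\sf shift}'$ is the shift map for the sequence $\Phi(F_i)$. This is a naturality statement: the shift map is defined componentwise as the composition $F_i \to F_{i+1} \hookrightarrow \bigoplus_j F_j$ of the transition morphism with an inclusion into the direct sum. Since the canonical isomorphism $\bigoplus_i \Phi(F_i) \to \Phi(\bigoplus_i F_i)$ is assembled from the images under $\Phi$ of the inclusions $F_i \hookrightarrow \bigoplus_j F_j$, functoriality of $\Phi$ gives a commutative square identifying $\Phi(\text{\sf shift})$ with $\text{\sf shift}'$ on each summand; the identity map is handled tautologically.

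For the ``in particular'' clause, it suffices to recall that tensor products, pullbacks and pushforwards all commute with arbitrary (in particular countable) direct sums. The derived tensor product $(-)\otimes G$ and the pullback $f^*$ are left adjoints (to $\RCHom(G,-)$ and $f_*$ respectively) and hence commute with all colimits; the pushforward $f_*$ commutes with arbitrary direct sums because it admits a right adjoint $f^!$, as recalled in the notation subsection. Applying the first part of the lemma to these functors gives the claimed noncanonical isomorphisms.

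The one potential obstacle is the issue flagged in the second paragraph: rigorously checking that $\Phi$ transports the shift map to the shift map. Once one unwinds the definition of the canonical map $\bigoplus_i \Phi(F_i)\to\Phi(\bigoplus_i F_i)$ in terms of the summand inclusions, this reduces to mere functoriality of $\Phi$ on the structure morphisms, so no deep input is required; the noncanonical nature of the resulting isomorphism reflects only the nonfunctoriality of cones in a triangulated category.
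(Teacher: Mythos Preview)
Your proof is correct and follows essentially the same route as the paper: apply $\Phi$ to the defining triangle, identify the map $\Phi(\id-\text{\sf shift})$ with $\id-\text{\sf shift}$ via the canonical isomorphism $\bigoplus_i\Phi(F_i)\cong\Phi(\bigoplus_i F_i)$, and conclude by uniqueness of the cone. The paper compresses your second paragraph into the phrase ``evidently commutative''; your unwinding of this via functoriality on the summand inclusions is exactly what is needed. For the ``in particular'' clause, the paper simply cites \cite{BV}, 3.3.4 for pushforwards (and calls the other two cases evident), whereas you invoke the existence of the adjoints $\RCHom(G,-)$, $f_*$, and $f^!$; both justifications are valid and amount to the same thing.
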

\begin{proof}
By the assumptions we have a diagram
$$
\xymatrix{
\bigoplus_i \Phi(F_i) \ar[rr]^{\id-\text{\sf shift}} \ar[d]^{\cong} && \bigoplus_i \Phi(F_i) \ar[rr] \ar[d]^{\cong} && \hocolim \Phi(F_i) \\
\Phi(\bigoplus_i F_i) \ar[rr]^{\id-\text{\sf shift}} && \Phi(\bigoplus_i F_i) \ar[rr] && \Phi(\hocolim F_i)
}
$$
which is evidently commutative. It follows that there is an isomorphism $\hocolim\Phi(F_i) \cong \Phi(\hocolim F_i)$.
For the second claim we use the fact that countable direct sums commute with tensor products, pullbacks (evident)
and pushforwards (\cite{BV}, 3.3.4).
\end{proof}

\begin{remark}
Note that by~\cite{BV} 3.3.4 tensor products, pullbacks and pushforward commute with arbitrary direct sums (not only with countable).
We will use subsequently this fact.
\end{remark}

Now assume that the triangulated category under consideration is the unbounded derived category $\D(\CA)$,
where $\CA$ is an abelian category with exact countable colimits.

\begin{lemma}\label{tlm}
If $F_1 \to F_2 \to F_3 \to \dots$ is a direct system of complexes in $\CA$
and $F$ is the complex obtained by taking termwise colimits of the above direct system
then $\hocolim F_i \cong F$.
\end{lemma}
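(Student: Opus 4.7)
The plan is to identify the homotopy colimit, defined as the cone of $\id - \text{shift}$ on $\bigoplus_i F_i$, with the termwise colimit $F$ by exhibiting a short exact sequence of complexes whose associated distinguished triangle matches the defining triangle of $\hocolim F_i$.

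First I would observe that for every index $n$ the map of objects in $\CA$
$$
\id - \text{shift} \colon \bigoplus_i F_i^n \longrightarrow \bigoplus_i F_i^n
$$
is a monomorphism, and its cokernel is precisely the colimit $\mathop{\mathrm{colim}} F_i^n = F^n$. This is the standard telescope presentation of a countable filtered colimit, and it uses only that direct sums exist in $\CA$. Since by hypothesis countable colimits in $\CA$ are exact, assembling these morphisms degreewise yields a short exact sequence of complexes
$$
0 \longrightarrow \bigoplus_i F_i \xrightarrow{\ \id - \text{shift}\ } \bigoplus_i F_i \longrightarrow F \longrightarrow 0
$$
in the abelian category of complexes over $\CA$.

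Next, I would pass to the derived category $\D(\CA)$. A termwise short exact sequence of complexes gives rise to a distinguished triangle, so we obtain
$$
\bigoplus_i F_i \xrightarrow{\ \id - \text{shift}\ } \bigoplus_i F_i \longrightarrow F \longrightarrow \bigoplus_i F_i[1].
$$
By the very definition of $\hocolim F_i$ as the cone of $\id - \text{shift}$, any third vertex of a distinguished triangle with the same first arrow is (noncanonically) isomorphic to this cone. Therefore $F \cong \hocolim F_i$ in $\D(\CA)$, as required.

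The only delicate point is the termwise injectivity of $\id - \text{shift}$ and the identification of its termwise cokernel with the colimit; once that is in place, exactness of countable colimits in $\CA$ converts it into an honest short exact sequence of complexes, and the rest is a formal consequence of the uniqueness (up to isomorphism) of the third vertex in a distinguished triangle. No additional hypothesis on the transition maps is needed, since the telescope presentation works for any sequence indexed by $\mathbb{N}$.
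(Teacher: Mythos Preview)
Your proof is correct and follows essentially the same approach as the paper: both establish the termwise short exact sequence $0 \to \bigoplus F_i \to \bigoplus F_i \to F \to 0$ of complexes using exactness of countable colimits, and then identify $F$ with the cone of $\id - \text{shift}$. Your version simply spells out more carefully the termwise telescope presentation and the passage from a short exact sequence of complexes to a distinguished triangle.
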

\begin{proof}
Consider the sequence of complexes $\xymatrix@1{\oplus F_i \ar[rr]^{\id-\text{\sf shift}} && \oplus F_i \ar[r] & F}$.
Since $\CA$ has exact colimits, it is termwise exact. Therefore $F$ is isomorphic to the cone of
the map $\xymatrix@1{\oplus F_i \ar[rr]^{\id-\text{\sf shift}} && \oplus F_i}$.
\end{proof}

\begin{lemma}[\cite{BN}]\label{hlim}
If $\{F_i\}$ is a direct system in $\D(\CA)$ then we have $\CH^n(\hocolim F_i) \cong \dlim \CH^n(F_i)$.
\end{lemma}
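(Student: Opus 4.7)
The plan is to apply the cohomology functor $\CH^n$ to the defining distinguished triangle
$$
\bigoplus F_i \exto{\id - \text{\sf shift}} \bigoplus F_i \to \hocolim F_i
$$
and read off the answer from the resulting long exact sequence. First I would use the fact that in $\D(\CA)$ direct sums are computed termwise and $\CH^n$ commutes with direct sums, so applying $\CH^n$ yields the exact sequence
$$
\bigoplus \CH^n(F_i) \exto{\id - \text{\sf shift}} \bigoplus \CH^n(F_i) \to \CH^n(\hocolim F_i) \to \bigoplus \CH^{n+1}(F_i) \exto{\id - \text{\sf shift}} \bigoplus \CH^{n+1}(F_i)
$$
in the abelian category $\CA$.

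The key observation, which I would establish next, is that for a direct system indexed by $\NN$ in an abelian category $\CA$ with exact countable colimits, the map $\id - \text{\sf shift}: \bigoplus_i M_i \to \bigoplus_i M_i$ is always injective, and its cokernel is precisely the colimit $\dlim M_i$. This is a standard computation: the colimit fits into a short exact sequence $0 \to \bigoplus M_i \xrightarrow{\id - \text{\sf shift}} \bigoplus M_i \to \dlim M_i \to 0$, which follows by writing down an explicit splitting (on the level of summands $M_i$ embedded via partial sums) and using exactness of filtered colimits to check injectivity. Applying this with $M_i = \CH^n(F_i)$ and with $M_i = \CH^{n+1}(F_i)$ shows that the rightmost arrow in the exact sequence above is injective, hence the map $\CH^n(\hocolim F_i) \to \bigoplus \CH^{n+1}(F_i)$ is zero, and the cokernel of $\id - \text{\sf shift}$ on $\bigoplus \CH^n(F_i)$ maps isomorphically onto $\CH^n(\hocolim F_i)$. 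This cokernel is $\dlim \CH^n(F_i)$, giving the desired isomorphism.

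The only mildly subtle point is the injectivity of $\id - \text{\sf shift}$ on $\bigoplus \CH^n(F_i)$, which relies on the hypothesis that $\CA$ has \emph{exact} countable colimits; without this, the argument would only give $\CH^n(\hocolim F_i) \twoheadrightarrow \dlim \CH^n(F_i)$ and potentially a $\lim^1$-style obstruction. I expect this to be the main (modest) obstacle, but it is handled exactly as in Lemma~\ref{tlm}, which the authors have already set up for this purpose.
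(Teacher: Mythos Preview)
Your proposal is correct and follows essentially the same route as the paper's proof: apply the long exact cohomology sequence to the defining triangle, use that $\CH^n$ commutes with direct sums, and invoke exactness of countable colimits in $\CA$ to see that $\id-\text{\sf shift}$ is injective on $\bigoplus\CH^{n+1}(F_i)$, whence $\CH^n(\hocolim F_i)$ is the cokernel $\dlim\CH^n(F_i)$. The paper states the injectivity step more tersely (``since the category $\CA$ has exact colimits the last map above is injective''), while you spell out the short exact sequence $0\to\bigoplus M_i\to\bigoplus M_i\to\dlim M_i\to 0$; the content is the same.
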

\begin{proof}
The long exact sequence of cohomology sheaves of the triangle defining $\hocolim F_i$ gives
$$
\dots \to \oplus_i \CH^n(F_i) \to \oplus_i \CH^n(F_i) \to \CH^n(\hocolim F_i) \to \oplus_i \CH^{n+1}(F_i) \to \oplus_i \CH^{n+1}(F_i) \to \dots
$$
Since the category $\CA$ has exact colimits the last map above is injective.
It follows that $\CH^n(\hocolim F_i) \cong \Coker (\oplus_i \CH^n(F_i) \to \oplus_i \CH^n(F_i)) \cong \dlim \CH^n(F_i)$,
the last isomorphism being the definition of the colimit.
\end{proof}

\begin{lemma}\label{mhcl}
If $\{F_i\}$ is a direct system and there is given a morphism of this direct system to $F$
then there exists a map $\hocolim F_i \to F$ compatible with the maps $F_i \to F$.
Moreover, if $\dlim \CH^t(F_i) = \CH^t(F)$ for each $t \in \ZZ$ then $\hocolim F_i \cong F$.
\end{lemma}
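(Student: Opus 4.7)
The plan is to build the map directly from the triangle defining $\hocolim F_i$ and then verify that it is an isomorphism using the cohomological criterion from Lemma~\ref{hlim}.

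For the first assertion, a morphism of the direct system $\{F_i\}$ to $F$ means we are given compatible maps $\psi_i : F_i \to F$, i.e.\ $\psi_{i+1}$ composed with the transition $F_i \to F_{i+1}$ equals $\psi_i$. These assemble into a single map $\psi = \oplus \psi_i : \bigoplus_i F_i \to F$. The compatibility condition is precisely the statement that $\psi \circ \mathsf{shift} = \psi$, i.e.\ $\psi \circ (\id - \mathsf{shift}) = 0$. By the defining triangle
$$
\xymatrix@1{\bigoplus F_i \ar[rr]^{\id-\mathsf{shift}} && \bigoplus F_i \ar[r] & \hocolim F_i}
$$
and the standard axiom of triangulated categories (a map out of the middle term that kills the incoming map factors through the cone), we obtain a (not necessarily unique) morphism $\hocolim F_i \to F$ whose composition with each structure map $F_i \to \hocolim F_i$ equals $\psi_i$. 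This settles existence and compatibility.

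For the second assertion, apply Lemma~\ref{hlim} to get $\CH^n(\hocolim F_i) \cong \dlim \CH^n(F_i)$, where the isomorphism is the one induced by the structure maps $F_i \to \hocolim F_i$. The map $\hocolim F_i \to F$ constructed above induces on $\CH^n$ a map $\dlim \CH^n(F_i) \to \CH^n(F)$ which, by the compatibility with the $\psi_i$'s, coincides with the canonical map induced by the $\CH^n(\psi_i)$. Under the hypothesis $\dlim \CH^t(F_i) \cong \CH^t(F)$ this canonical map is an isomorphism for every $t$, so $\hocolim F_i \to F$ is a quasi-isomorphism, hence an isomorphism in $\D(\CA)$.

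The only mildly delicate point is the non-canonicity of the factorization through the cone in the first step; however, any choice of factorization is good enough, because the cohomological criterion in the second step does not depend on which factorization we pick — it depends only on the induced maps on cohomology, which are pinned down by the given compatibility $\psi_i = (\hocolim F_i \to F) \circ (F_i \to \hocolim F_i)$. No further input is needed.
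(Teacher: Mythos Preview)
Your proof is correct and follows essentially the same route as the paper: factor the map $\oplus F_i \to F$ through the cone of $\id - \mathsf{shift}$ using the vanishing of the composition, then invoke Lemma~\ref{hlim} to identify the induced map on cohomology and conclude quasi-isomorphism. Your added remark on the non-canonicity of the factorization is a nice clarification, but otherwise the argument is the paper's.
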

\begin{proof}
We have a canonical map $\oplus F_i \to F$. Its composition with
$\xymatrix@1{\oplus F_i \ar[rr]^{\id-\text{\sf shift}} && \oplus F_i}$ vanishes
since the map is induced by a map of the direct system $\{F_i\}$ to $F$.
Hence it can be factored through a map $\hocolim F_i \to F$. On the $t$-th cohomology
it gives the map $\CH^t(\hocolim F_i) = \dlim\CH^t(F_i) \to \CH^t(F)$ induced
by the map of the direct system $\{\CH^t(F_i)\}$ to $\CH^t(F)$. If it is an isomorphism
for all $t$ then the map $\hocolim F_i \to F$ is a quasiisomorphism.
\end{proof}

\subsection{Approximation}\label{ss_appr}

We say that a direct system $\{F_i\}$ in $\D(\CA)$ {\sf approximates $F \in \D(\CA)$} if there is given
a morphism from the direct system to $F$ such that for any $n \ge 0$ the map
$\tau^{\le n}\tau^{\ge -n}F_k \to \tau^{\le n}\tau^{\ge -n}F$ is an isomorphism
for $k \gg 0$. The following is an immediate corollary of Lemma~\ref{mhcl}.

\begin{lemma}\label{appr}
If a direct system $\{F_i\}$ approximates $F$ in $\D(\CA)$ then $\hocolim F_i \cong F$.
\end{lemma}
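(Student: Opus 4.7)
The plan is to reduce the claim directly to Lemma~\ref{mhcl}. The approximation hypothesis already gives us a compatible system of morphisms $F_i \to F$, so we have the canonical factorization $\hocolim F_i \to F$. Hence it suffices to verify the hypothesis of the second assertion of Lemma~\ref{mhcl}, namely that the induced map $\dlim \CH^t(F_i) \to \CH^t(F)$ is an isomorphism for every $t \in \ZZ$.

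To check this, I would fix $t \in \ZZ$ and choose any integer $n \ge |t|$. By the definition of approximation, there exists $k_0$ such that for all $k \ge k_0$ the morphism $\tau^{\le n}\tau^{\ge -n} F_k \to \tau^{\le n}\tau^{\ge -n} F$ is an isomorphism in $\D(\CA)$. Since $-n \le t \le n$, taking $\CH^t$ of this isomorphism and using the formulas for the cohomology of canonical truncations recalled in the preliminaries, one obtains that the induced map $\CH^t(F_k) \to \CH^t(F)$ is an isomorphism for all $k \ge k_0$. In other words, the direct system $\{\CH^t(F_i)\}$ is eventually constant with value $\CH^t(F)$, and its colimit is therefore $\CH^t(F)$, as required.

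Combining this with Lemma~\ref{mhcl} immediately yields the desired isomorphism $\hocolim F_i \cong F$. There is no real obstacle here; the only point requiring minor care is to observe that the approximation condition on the bounded truncations for each $n$ forces termwise stabilization (not merely surjectivity of the colimit) on each individual cohomology sheaf, which is what makes the hypothesis of Lemma~\ref{mhcl} applicable.
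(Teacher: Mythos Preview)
Your proof is correct and follows exactly the approach the paper intends: the paper simply states that the lemma is an immediate corollary of Lemma~\ref{mhcl}, and you have spelled out precisely why the hypothesis $\dlim \CH^t(F_i) \cong \CH^t(F)$ holds.
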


Recall (see~\cite{K3}) that a direct system $\{F_i\}$ in $\D(\CA)$ is said to be {\sf stabilizing in finite degrees}\/
if for any $n \in \ZZ$ the map $\tau^{\ge n}F_i \to \tau^{\ge n}F_{i+1}$ is an isomorphism
for $i \gg 0$.

Let $\CB \subset \CA$ be an abelian subcategory and let $\D_\CB^-(\CA)$ denote the full subcategory
in $\D^-(\CA)$, the bounded above derived category of $\CA$,  consisting of all objects with cohomology in $\CB$.

\begin{lemma}\label{sfd}
If a direct system $\{F_i\}$ in $\D^-_\CB(\CA)$ stabilizes in finite degrees then $\hocolim F_i \in \D^-_\CB(\CA)$.
\end{lemma}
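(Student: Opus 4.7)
The plan is to verify the two defining properties of $\D^-_\CB(\CA)$ for $\hocolim F_i$: that its cohomology is bounded above and that each cohomology sheaf lies in $\CB$. By Lemma~\ref{hlim} the problem reduces to analyzing the sheaves $\CH^n(\hocolim F_i) \cong \dlim_i \CH^n(F_i)$ for each $n \in \ZZ$, so the whole argument takes place on the level of cohomology.

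For the cohomology condition I would use the stabilization hypothesis at each degree $n$ separately. By hypothesis there exists $N(n)$ such that $\tau^{\ge n} F_i \to \tau^{\ge n} F_{i+1}$ is an isomorphism for all $i \ge N(n)$. Taking $\CH^n$ of these truncations shows that $\CH^n(F_i) \to \CH^n(F_{i+1})$ is an isomorphism for $i \ge N(n)$, so the direct system $\{\CH^n(F_i)\}$ is eventually constant and $\dlim_i \CH^n(F_i) \cong \CH^n(F_{N(n)})$. Since $F_{N(n)} \in \D^-_\CB(\CA)$ its cohomology sheaves lie in $\CB$, verifying this half of the conclusion.

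The boundedness requires a separate idea: stabilization at a single degree already forces a uniform upper bound. Apply the hypothesis with $n = 0$ to obtain $N_0$ such that $\tau^{\ge 0} F_i \cong \tau^{\ge 0} F_{N_0}$ for every $i \ge N_0$. Since $F_{N_0}$ belongs to $\D^-(\CA)$, there exists $M$ with $\CH^m(F_{N_0}) = 0$ for all $m > M$, and the isomorphism of truncations propagates this vanishing to $\CH^m(F_i) = 0$ for every $m > M$ and every $i \ge N_0$. Consequently $\dlim_i \CH^m(F_i) = 0$ for $m > M$, which combined with the previous paragraph yields $\hocolim F_i \in \D^-_\CB(\CA)$.

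The main subtle point is the boundedness above. A priori each $F_i$ has its own upper cohomological bound, and these bounds could grow with $i$; the content of the lemma is precisely that the stabilization hypothesis (applied at any single degree, e.g.\ $n = 0$) prevents this unbounded growth and yields the uniform bound needed to conclude that the homotopy colimit remains bounded above.
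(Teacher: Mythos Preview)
Your proof is correct and follows exactly the approach the paper intends: the paper's proof is the single line ``Follows immediately from Lemma~\ref{hlim},'' and you have simply spelled out the routine verification that the eventually-constant direct systems $\{\CH^n(F_i)\}$ force each $\dlim\CH^n(F_i)$ to lie in $\CB$ and to vanish for $n\gg 0$. (One cosmetic remark: in your boundedness step you may as well take $M\ge 0$, since otherwise the truncation isomorphism $\tau^{\ge 0}F_i\cong\tau^{\ge 0}F_{N_0}$ only controls degrees $\ge 0$; this does not affect the argument.)
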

\begin{proof}
Follows immediately from Lemma~\ref{hlim}.
\end{proof}

The following easy Lemma shows that every object of $\D^-(X)$ can be approximated
by a stabilizing in finite degrees direct system of perfect complexes. This fact will
be used subsequently in the paper.

\begin{lemma}\label{dmsfd}
For every $F \in \D^-(X)$ there is a stabilizing in finite degrees direct system
of perfect complexes $F_k \in \D^\perf(X)$ which approximates $F$.
In particular, $\hocolim F_k \cong F$.
\end{lemma}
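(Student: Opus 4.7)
The plan is to approximate $F$ by stupid truncations of a locally free resolution. Since $X$ is quasiprojective there are enough finite rank locally free sheaves, so a standard inductive construction produces, for $F \in \D^-(X)$ concentrated in cohomological degrees $\le N$, a quasi-isomorphism $P^\bullet \to F$ with $P^\bullet$ a bounded above complex of finite rank locally free sheaves (one can arrange $P^n = 0$ for $n > N$). This $P^\bullet$ will replace $F$ in all further computations.

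I would then set $F_k := \sigma^{\ge -k} P^\bullet$. Each $F_k$ is a bounded complex of finite rank locally free sheaves, hence a perfect complex. The stupid truncations form an increasing chain of subcomplexes $F_1 \subset F_2 \subset F_3 \subset \dots$ inside $P^\bullet$, yielding a direct system of perfect complexes together with compatible morphisms to $P^\bullet \cong F$.

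Verification of the two required properties reduces to a cohomology computation. From the short exact sequence of complexes $0 \to F_k \to P^\bullet \to \sigma^{\le -k-1} P^\bullet \to 0$ and the vanishing $\CH^n(\sigma^{\le -k-1} P^\bullet) = 0$ for $n \ge -k$, the long exact sequence yields $\CH^n(F_k) \cong \CH^n(F)$ for all $n \ge -k+1$. Consequently, for fixed $n \ge 0$ and $k \ge n+1$, the induced map $\tau^{\le n}\tau^{\ge -n} F_k \to \tau^{\le n}\tau^{\ge -n} F$ is an isomorphism, establishing the approximation property. Applying the same analysis to the triangle $F_k \to F_{k+1} \to P^{-k-1}[k+1]$ shows that $\tau^{\ge n} F_k \to \tau^{\ge n} F_{k+1}$ is an isomorphism once $k \ge 1-n$, giving the stabilization in finite degrees. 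The final assertion $\hocolim F_k \cong F$ is then immediate from Lemma~\ref{appr}.

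The only nontrivial ingredient is the initial existence of the bounded above resolution $P^\bullet$ by finite rank locally frees, which is a standard consequence of quasiprojectivity; the remaining steps are routine manipulations of stupid truncations and long exact sequences.
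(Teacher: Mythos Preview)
Your proposal is correct and follows essentially the same approach as the paper: choose a bounded-above locally free resolution $P^\bullet$ of $F$, set $F_k = \sigma^{\ge -k}P^\bullet$, and verify the approximation and stabilization properties directly from the cohomology of the stupid truncation triangles, finishing with Lemma~\ref{appr}. The paper's own proof is the same argument stated more tersely; your version simply spells out the cohomology bounds that the paper leaves implicit.
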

\begin{proof}
Choose a locally free resolution for $F$ and denote by $F_k$ its stupid truncation at degree $-k$.
Then $F_k$ is a perfect complex and $F_k$ form a stabilizing in finite degrees direct system.
Moreover, for any $n\in\ZZ$ we have $\tau^{\ge -n}F_k \cong \tau^{\ge -n}F$ for $k \gg 0$,
hence $F_k$ approximates $F$. By Lemma~\ref{appr} we have $F \cong \hocolim F_k$.
\end{proof}

%

We are also interested in approximation of arbitrary unbounded quasicoherent complexes.
Certainly arbitrary objects of $\D_{qc}(X)$ can't be represented as homotopy colimits
of perfect complexes. There is however the following implicit approximation result.

\begin{lemma}\label{perf-gen-qc}
The minimal full triangulated subcategory of $\D_{qc}(X)$ closed under arbitrary direct sums
and containing $\D^\perf(X)$ is $\D_{qc}(X)$.
%
\end{lemma}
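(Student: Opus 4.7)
The key input is Bondal--Van den Bergh's theorem~\cite{BV} that $\D^\perf(X)$ is a set of compact generators of $\D_{qc}(X)$, i.e., $(\D^\perf(X))^\perp = 0$ inside $\D_{qc}(X)$. Given this, the conclusion follows from standard machinery on compactly generated triangulated categories: any localizing subcategory containing a set of compact generators is the whole category. My plan is to make this argument explicit in stages, reducing the difficulty progressively.

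First, I would show that every quasicoherent sheaf on $X$ lies in the subcategory $\CT$ of the statement. For a coherent sheaf $\CG$, quasiprojectivity of $X$ furnishes a resolution $E_\bullet \to \CG$ by finite-rank locally free sheaves, and Lemma~\ref{dmsfd} realizes $\CG$ as the homotopy colimit of the perfect stupid truncations $\sigma^{\ge -n} E_\bullet$, so $\CG \in \CT$. For a general quasicoherent sheaf, I would write it as a filtered colimit of its coherent subsheaves; extending the $\hocolim$ construction to arbitrary filtered direct systems via $\Cone\bigl(\bigoplus_{i \to j} F_i \to \bigoplus_i F_i\bigr)$ and using exactness of filtered colimits in $\Qcoh(X)$ (an extension of Lemma~\ref{tlm}), this colimit becomes a homotopy colimit inside $\CT$. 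Then, induction on the number of nonzero cohomology sheaves, using the canonical truncation triangles $\tau^{\le m-1} F \to F \to \tau^{\ge m} F$, covers all bounded complexes, and the identity $F \cong \hocolim_m \tau^{\le m} F$ from Lemma~\ref{mhcl} covers bounded below complexes.

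The main obstacle is the fully unbounded case, since the truncations $\tau^{\ge -n} F$ form an inverse system with limit $F$, not a direct system that can be fed into a homotopy colimit. To dispatch this, I would appeal to Brown representability: since $\CT$ is itself compactly generated by $\D^\perf(X)$, the inclusion $\CT \hookrightarrow \D_{qc}(X)$ preserves arbitrary direct sums and hence admits a right adjoint $R$. For any $F \in \D_{qc}(X)$, the cone of the unit $R(F) \to F$ lies in $\CT^\perp \subseteq (\D^\perf(X))^\perp$, which vanishes by compact generation~\cite{BV}. Therefore $F \cong R(F) \in \CT$, giving the desired equality.
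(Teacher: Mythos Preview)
Your proposal is correct, and its final paragraph is essentially the paper's proof. The paper invokes Neeman's Bousfield localization theorem~\cite{N1} to get a semiorthogonal decomposition $\D_{qc}(X) = \langle \CR^\perp, \CR\rangle$ with $\CR = \CT$, then observes $\CR^\perp \subset (\D^\perf(X))^\perp = 0$ (the paper derives this vanishing from Lemma~\ref{slinso} rather than citing~\cite{BV}, but the content is the same). Your Brown representability argument produces exactly this right adjoint and uses the same vanishing of the orthogonal; the map $R(F)\to F$ is the counit, not the unit, but the mathematics is right.

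The notable difference is that everything before your last paragraph is unnecessary: the Brown representability step alone handles arbitrary objects of $\D_{qc}(X)$, so the staged approach through coherent sheaves, quasicoherent sheaves, bounded complexes, and bounded-below complexes is never invoked. The paper's proof is three lines precisely because it skips straight to the localization argument. Your intermediate constructions are valid (modulo some care with filtered homotopy colimits over non-sequential index sets, which you wave at), but they add length without adding coverage. If you want a self-contained hands-on proof avoiding Brown/Bousfield entirely, you would need a genuinely different device for the unbounded-below case; as written, you end up using the abstract machinery anyway, so you may as well lead with it.
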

\begin{proof}
Let $\CR \subset \D_{qc}(X)$ be the minimal full triangulated subcategory closed
under arbitrary direct sums and containing $\D^\perf(X)$. By the Bousfield localization
Theorem (see~\cite{N1}, Lemma 1.7) there is a semiorthogonal decomposition $\D_{qc}(X) = \langle \CR^\perp, \CR \rangle$
(the category $\CR^\perp$ is the category of $\CR$-local objects). But $\CR^\perp \subset (\D^\perf(X))^\perp$
and the latter category is zero (e.g.~by the argument in the proof of Lemma~\ref{slinso}), hence
$\CR = \D_{qc}(X)$.
\end{proof}

%
%
%

We conclude this section with the following simple result which will be used later.

\begin{lemma}\label{pfl}
Let $\phi:Y \to X$ be a quasiprojective morphism and assume that $L$ is a line bundle on $Y$ ample over $X$.
If $F \in \D^{[p,q]}(Y)$ then for any $k \gg 0$ there is a direct system $G_m$ in $\D^{[p,q]}(X)$ such that
$\phi_*(F\otimes L^k) \cong \hocolim G_m$.
\end{lemma}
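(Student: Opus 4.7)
The plan is to combine a relative Serre vanishing argument with an approximation of quasicoherent complexes by coherent ones.

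First, I would establish that $\phi_*(F \otimes L^k) \in \D^{[p,q]}_{qc}(X)$ for $k$ sufficiently large. Since $F$ is bounded, its cohomology sheaves $\CH^j(F)$ are coherent and nonzero only for finitely many $j \in [p,q]$. For the $\phi$-ample line bundle $L$, relative Serre vanishing produces an integer $k_j$ with $R^i\phi_*(\CH^j(F) \otimes L^k) = 0$ for all $i > 0$ and $k \ge k_j$. Taking $k \ge \max_j k_j$, an induction on the length of the canonical truncation filtration of $F$ --- applied to the triangles $\tau^{<j}F \to \tau^{\le j}F \to \CH^j(F)[-j]$, tensored with $L^k$ and pushed forward --- shows that $H := \phi_*(F \otimes L^k)$ has cohomology concentrated in degrees $[p,q]$.

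Second, I would show that every object $H \in \D^{[p,q]}_{qc}(X)$ can be written as $\hocolim G_m$ with $G_m \in \D^{[p,q]}(X)$, by induction on $q-p$. The base case $p=q$ is immediate: write the quasicoherent sheaf $\CH^p(H)$ as a filtered colimit of its coherent subsheaves $\CN_m$ and apply Lemma~\ref{tlm} to get $H \cong \CH^p(H)[-p] \cong \hocolim \CN_m[-p]$. For the inductive step, consider the triangle $\tau^{\le p}H \to H \to \tau^{\ge p+1}H \exto{\beta} \CH^p(H)[1-p]$. By induction $\tau^{\ge p+1}H \cong \hocolim H^{(1)}_m$ with $H^{(1)}_m \in \D^{[p+1,q]}(X)$, while the base case gives $\CH^p(H) = \dlim \CN^p_\alpha$ with $\CN^p_\alpha$ coherent. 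The composition $H^{(1)}_m \to \tau^{\ge p+1}H \exto{\beta} \CH^p(H)[1-p]$ factors through $\CN^p_{\alpha(m)}[1-p]$ for a suitable index $\alpha(m)$, and I define $G_m$ as the fiber of the resulting morphism $H^{(1)}_m \to \CN^p_{\alpha(m)}[1-p]$. Then $G_m \in \D^{[p,q]}(X)$, the $G_m$ assemble into a direct system equipped with a compatible map to $H$, and passing to $\hocolim$ yields $H$ (using that hocolim preserves distinguished triangles, together with Lemma~\ref{mhcl}).

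The main obstacle is the lifting step inside the induction: each composition $H^{(1)}_m \to \CH^p(H)[1-p]$ in $\D_{qc}(X)$ must factor through a coherent subsheaf $\CN^p_{\alpha(m)}$. This rests on the fact that, for a bounded complex $C$ of coherent sheaves on the Noetherian quasiprojective scheme $X$, the functor $\Hom_{\D_{qc}(X)}(C, -)$ commutes with filtered colimits in $\Qcoh(X)$. For a single coherent sheaf this is the standard compactness of coherent sheaves on a Noetherian scheme; the extension to bounded coherent complexes follows by induction on the cohomological amplitude via the canonical truncation triangles and the long exact $\Ext$ sequence.
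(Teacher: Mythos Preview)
Your Step~1 is fine and reaches the same conclusion as the paper, though the paper applies relative Serre vanishing to the \emph{terms} of a representing complex of $F$ (after smart truncation to $[p,q]$) rather than to its cohomology sheaves. This has the payoff of producing an explicit complex
\[
R^0\phi_*(F^p\otimes L^k) \to \cdots \to R^0\phi_*(F^q\otimes L^k)
\]
representing $\phi_*(F\otimes L^k)$, which is what drives the rest of the argument.

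The real gap is in Step~2. Your general claim that \emph{every} $H \in \D^{[p,q]}_{qc}(X)$ is $\hocolim G_m$ with $G_m \in \D^{[p,q]}(X)$ is false. In this paper $\hocolim$ is taken only over $\mathbb{N}$-indexed systems (see~\S\ref{sshocolim}), and by Lemma~\ref{hlim} such an isomorphism would force each $\CH^j(H)$ to be a sequential colimit of coherent sheaves, hence a countably generated quasicoherent sheaf. But an arbitrary quasicoherent sheaf need not be countably generated---already on $\Spec k$ with $k$ uncountable, an uncountable-dimensional vector space gives a counterexample. What makes the lemma true is that the \emph{particular} object $\phi_*(F\otimes L^k)$ has countably generated cohomology, and this is exactly where the hypothesis that $\phi$ is quasiprojective enters: each $R^0\phi_*(F^t\otimes L^k)$ is a countable union of coherent subsheaves. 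You never invoke this. The paper then simply takes for $G_m$ the evident coherent subcomplexes of the explicit complex above (closing up under the differential) and applies Lemma~\ref{tlm}.

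A secondary issue: in your inductive step the objects $G_m$ are fibers of morphisms in a triangulated category, and you assert they ``assemble into a direct system equipped with a compatible map to $H$''. Fibers are not functorial, so arranging transition maps $G_m \to G_{m+1}$ that compose correctly and are compatible with the maps to $H$ needs justification you have not given. The paper's explicit subcomplex construction sidesteps this entirely.
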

\begin{proof}
Taking the smart truncations of $F$ at $p$ and $q$ we can assume that $F$ is a complex such that $F^t = 0$
unless $t \in [p,q]$. Since $L$ is ample over $X$ for $k \gg 0$ the higher direct images of $F^t\otimes L^k$
vanish hence $\phi_*(F\otimes L^k)$ is isomorphic to the complex
$$
\dots \to 0 \to R_0\phi_*(F^p\otimes L^k) \to \dots \to R_0\phi_*(F^q\otimes L^k) \to 0 \to \dots
$$
Since $\phi$ is quasiprojective, each sheaf $R_0\phi_*(F^t\otimes L^k)$ is a quasicoherent sheaf
which can represented as a countable union of coherent subsheaves. Choose such representation
$R_0\phi_*(F^t\otimes L^k) = \cup C^t_i$ and take
$$
G^t_m = \cup_{i \le m} C^t_i + d(\cup_{i\le m} C^{t-1}_i).
$$
Then it is clear that $G_m$ form a direct system of complexes the termwise colimit of which is the above complex.
Hence $\phi_*(F\otimes L^k) \cong \hocolim G_m$ by Lemma~\ref{tlm}.
\end{proof}

\section{Inducing a semiorthogonal decomposition}

Let $\CT$ and $\CT'$ be triangulated categories and assume that we are given semiorthogonal decompositions
$\CT = \lan \CA_1,\dots,\CA_m \ran$ and $\CT' = \lan \CA'_1,\dots,\CA'_m \ran$.
A triangulated functor $\Phi:\CT \to \CT'$ is {\sf compatible}\/
with the semiorthogonal decompositions if $\Phi(\CA_i) \subset \CA'_i$ for all $1\le i\le m$.

Let $\alpha_i:\CT \to \CT$ and $\alpha'_i:\CT' \to \CT'$ be the projection functors
of the semiorthogonal decompositions.

\begin{lemma}\label{cc}
If the functor $\Phi$ is compatible with the semiorthogonal decompositions then it commutes
with the projection functors, that is we have an isomorphism of functors
$\Phi\circ\alpha_i \cong \alpha'_i\circ\Phi$.
\end{lemma}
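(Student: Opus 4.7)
The plan is to apply $\Phi$ to the canonical filtration diagram~\eqref{tower} of $T \in \CT$ and invoke the uniqueness statement of Lemma~\ref{ff} on $\CT'$.

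More precisely, fix $T \in \CT$ and let
\[
0 = T_m \to T_{m-1} \to \dots \to T_1 \to T_0 = T, \qquad A_k = \Cone(T_k \to T_{k-1})[-1] \in \CA_k,
\]
be the filtration of $T$ associated to the decomposition $\CT = \lan \CA_1,\dots,\CA_m\ran$, so that $\alpha_k(T) = A_k$. Since $\Phi$ is triangulated, applying it termwise produces a diagram of the same shape as~\eqref{tower} for the object $\Phi(T) \in \CT'$, namely
\[
0 = \Phi(T_m) \to \Phi(T_{m-1}) \to \dots \to \Phi(T_0) = \Phi(T),
\]
with distinguished triangles whose third vertices are $\Phi(A_k)$. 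By compatibility, $\Phi(A_k) \in \CA'_k$ for each $k$. Hence this is a filtration of $\Phi(T)$ of exactly the kind produced by the semiorthogonal decomposition $\CT' = \lan \CA'_1,\dots,\CA'_m\ran$.

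By the uniqueness part of Lemma~\ref{ff} applied to $\CT'$, this filtration must coincide with the canonical one for $\Phi(T)$; in particular, its $k$-th factor is $\alpha'_k(\Phi(T))$. Thus we obtain an isomorphism
\[
\alpha'_k(\Phi(T)) \cong \Phi(A_k) = \Phi(\alpha_k(T)).
\]
Functoriality of this isomorphism in $T$ is the functoriality statement of Lemma~\ref{ff}: a morphism $T \to T'$ in $\CT$ induces a unique morphism of diagrams~\eqref{tower}, and applying $\Phi$ gives a morphism of the corresponding diagrams in $\CT'$, which by uniqueness must agree with the one induced by $\Phi(T) \to \Phi(T')$.

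There is no real obstacle here; the entire argument is an immediate consequence of the uniqueness and functoriality of the filtration~\eqref{tower}, which was already established in Lemma~\ref{ff}. The only point to be slightly careful about is that the triangles produced by $\Phi$ really do qualify as the canonical ones on the $\CT'$-side, which holds precisely because compatibility places each $\Phi(A_k)$ in $\CA'_k$.
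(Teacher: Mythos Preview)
Your argument is correct and essentially identical to the paper's own proof: both apply $\Phi$ to the filtration~\eqref{tower}, use compatibility to place the factors $\Phi(A_k)$ in $\CA'_k$, and then invoke the uniqueness and functoriality of Lemma~\ref{ff}. One tiny slip: in the paper's conventions $A_k = \Cone(T_k \to T_{k-1})$ without the shift by $-1$, though since $\CA_k$ is triangulated this does not affect the argument.
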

\begin{proof}
Take any $T \in \CT$ and let
$$
\xymatrix@C=5pt{
0 \ar@{=}[rr] && T_m \ar[rr] && T_{m-1} \ar[rr] \ar[dl] && T_{m-2} \ar[rr] \ar[dl] && \dots \ar[rr] && T_2 \ar[rr] && T_1 \ar[rr] \ar[dl] && T_0 \ar@{=}[rr] \ar[dl] && T \\
&&& \alpha_m(T) \ar@{..>}[ul] && \alpha_{m-1}(T) \ar@{..>}[ul] &&& \dots &&& \alpha_2(T) \ar@{..>}[ul] && \alpha_1(T) \ar@{..>}[ul]
}
$$
be the filtration of $T$ with factors in $\CA_i$. Applying the functor $\Phi$ we obtain a diagram
$$
\xymatrix@C=-8pt{
\hbox to 0 pt {\hss$0 = $}\Phi(T_m) \ar[rr] && \Phi(T_{m-1}) \ar[rr] \ar[dl] && \Phi(T_{m-2})  \ar[dl] && \to \dots \to
 && \Phi(T_2) \ar[rr] && \Phi(T_1) \ar[rr] \ar[dl] && \Phi(T_0) \hbox to 0 pt {$ = \Phi(T)$\hss} \ar[dl] \\
& \Phi(\alpha_m(T)) \ar@{..>}[ul] && \Phi(\alpha_{m-1}(T)) \ar@{..>}[ul] &&& \dots &&& \Phi(\alpha_2(T)) \ar@{..>}[ul] && \Phi(\alpha_1(T)) \ar@{..>}[ul]
}
$$
Since $\Phi(\alpha_i(T)) \in \Phi(\CA_i) \subset \CA'_i$ we see that this diagram
gives the filtration of $\Phi(T)$ with factors in $\CA'_i$, hence we get isomorphisms
$\Phi(\alpha_i(T)) \cong \alpha'_i(\Phi(T))$. Since such filtration is functorial by Lemma~\ref{ff},
the obtained isomorphisms are functorial as well.
\end{proof}

\begin{lemma}\label{r}
Assume that $\CT = \lan \CA_1,\dots,\CA_m \ran$ and $\CT = \lan \CA'_1, \dots, \CA'_m \ran$
are semiorthogonal decompositions such that $\CA'_i \subset \CA_i$ for all $1 \le i \le m$.
Then $\CA'_i = \CA_i$ for all $i$.
\end{lemma}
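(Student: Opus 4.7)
The key idea is to exploit Lemma~\ref{cc} by taking $\Phi$ to be the identity functor. More precisely, view $\CT$ itself as two copies: the source equipped with the semiorthogonal decomposition $\lan \CA'_1,\dots,\CA'_m\ran$ and the target equipped with $\lan \CA_1,\dots,\CA_m\ran$. The hypothesis $\CA'_i \subset \CA_i$ says exactly that $\id:\CT \to \CT$ is compatible with these two decompositions in the sense introduced before Lemma~\ref{cc}. Applying that lemma, the projection functors $\alpha'_i$ (for the primed decomposition) and $\alpha_i$ (for the unprimed one) are canonically isomorphic as endofunctors of $\CT$.

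Now for any $A \in \CA_i$ one checks directly that the filtration~\eqref{tower} of $A$ with respect to $\lan\CA_1,\dots,\CA_m\ran$ is the trivial one with $T_k = 0$ for $k \ge i$ and $T_k = A$ for $k < i$; in particular $\alpha_i(A) = A$. Combining this with the isomorphism $\alpha_i \cong \alpha'_i$ from the previous paragraph yields
\[
A \;\cong\; \alpha_i(A) \;\cong\; \alpha'_i(A) \;\in\; \CA'_i,
\]
so $\CA_i \subset \CA'_i$. Together with the given inclusion $\CA'_i \subset \CA_i$ this gives $\CA'_i = \CA_i$, as required.

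There is no real obstacle here: the entire argument is just the observation that the projection functor to the $i$-th component of a semiorthogonal decomposition is determined by the decomposition, and that a compatible functor intertwines these projections. The only point one has to be mildly careful about is the direction of compatibility in Lemma~\ref{cc}, which is why one takes $\id$ with source carrying the smaller (primed) subcategories and target carrying the larger (unprimed) ones.
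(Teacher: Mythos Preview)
Your proof is correct and essentially identical to the paper's own argument: apply Lemma~\ref{cc} to the identity functor (compatible because $\CA'_i \subset \CA_i$) to conclude $\alpha_i \cong \alpha'_i$, whence any $A \in \CA_i$ satisfies $A \cong \alpha_i(A) \cong \alpha'_i(A) \in \CA'_i$. The paper's proof is just a more terse version of what you wrote.
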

\begin{proof}
The identity functor $\CT \to \CT$ is compatible with these semiorthogonal decompositions, hence
their projections functors are isomorphic by Lemma~\ref{cc}. In particular,
for any $i$ and any $A \in \CA_i$ we have $A \cong \alpha_i(A) \cong \alpha'_i(A) \in \CA'_i$,
where $\alpha_i$ and $\alpha'_i$ are the projection functors,
hence $\CA_i \subset \CA'_i$.
\end{proof}

\begin{lemma}\label{un}
If $\Phi:\CT \to \CT'$ is a fully faithful functor and $\CT' = \lan \CA'_1, \dots, \CA'_m\ran$
is a semiorthogonal decomposition then there exists at most one semiorthogonal decomposition
of $\CT$ compatible with $\Phi$, which is given by $\CA_i = \Phi^{-1}(\CA'_i)$.
\end{lemma}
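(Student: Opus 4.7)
My plan is to show that any semiorthogonal decomposition $\CT = \lan \CB_1, \dots, \CB_m \ran$ compatible with $\Phi$ must satisfy $\CB_i = \Phi^{-1}(\CA'_i)$, where by $\Phi^{-1}(\CA'_i)$ I mean the full subcategory of $T \in \CT$ with $\Phi(T) \in \CA'_i$. Compatibility $\Phi(\CB_i) \subset \CA'_i$ immediately gives the inclusion $\CB_i \subset \Phi^{-1}(\CA'_i)$, so the content is in the reverse inclusion.

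For the reverse, I would pick $T \in \CT$ with $\Phi(T) \in \CA'_i$ and look at its canonical filtration with respect to $\lan \CB_1, \dots, \CB_m \ran$, whose factors are $\beta_j(T) \in \CB_j$. By Lemma~\ref{cc} applied to the compatible functor $\Phi$, we have functorial isomorphisms $\Phi(\beta_j(T)) \cong \alpha'_j(\Phi(T))$. Since $\Phi(T) \in \CA'_i$, the right-hand side vanishes for $j \ne i$ and equals $\Phi(T)$ for $j = i$.

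The key auxiliary observation is that a fully faithful functor reflects zero objects: if $\Phi(X) = 0$ then $\Hom(X, X) \cong \Hom(\Phi(X), \Phi(X)) = 0$, so $\id_X = 0$ and hence $X = 0$. Applying this to $\beta_j(T)$ for $j \ne i$ shows that all non-$i$-th factors of the filtration of $T$ vanish, so $T \cong \beta_i(T) \in \CB_i$. This proves $\Phi^{-1}(\CA'_i) \subset \CB_i$, and combined with the opposite inclusion gives $\CB_i = \Phi^{-1}(\CA'_i)$.

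The proof is short and presents no real obstacle once Lemma~\ref{cc} is available; the only subtle point worth emphasizing is the reflection-of-zero-objects property of fully faithful functors, which is what lets us pass from the vanishing of $\Phi(\beta_j(T))$ to the vanishing of $\beta_j(T)$ itself.
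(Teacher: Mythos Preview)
Your proof is correct and follows essentially the same approach as the paper: both use Lemma~\ref{cc} to identify $\Phi(\beta_j(T))$ with $\alpha'_j(\Phi(T))$, observe that the latter vanishes for $j\ne i$ when $\Phi(T)\in\CA'_i$, and then invoke full faithfulness of $\Phi$ to conclude $\beta_j(T)=0$. Your explicit mention of the reflection-of-zero-objects property is exactly what the paper uses implicitly when it says ``since $\Phi$ is fully faithful, it follows that $\alpha_j(A)=0$.''
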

\begin{proof}
Let $\CT = \lan \CA_1, \dots, \CA_m \ran$ be a semiorthogonal decomposition compatible with $\Phi$.
Then we have $\CA_i \subset \Phi^{-1}(\CA'_i)$. On the other hand, let $A \in \Phi^{-1}(\CA'_i)$.
Then $\alpha'_j(\Phi(A)) = 0$ for all $j \ne i$. Hence by Lemma~\ref{cc} we have $\Phi(\alpha_j(A)) = 0$ for all $j \ne i$.
But since $\Phi$ is fully faithful, it follows that $\alpha_j(A) = 0$ for all $j \ne i$, so $A \in \CA_i$.
Thus we are forced to have $\CA_i = \Phi^{-1}(\CA'_i)$.
\end{proof}


In general the collection of subcategories $\CA_i = \Phi^{-1}(\CA'_i)$
does not give a semiorthogonal decomposition. Actually, it is easy to see that
this collection is semiorthogonal (by faithfulness of $\Phi$), however it
can be not full. The simplest example is the functor $\Phi:\D^b(\kk) \to \D^b(\PP^1)$
which takes $\kk$ to $\CO_{\PP^1}$. If one considers the semiorthogonal decomposition
$\D^b(\PP^1) = \lan \CA'_1,\CA'_2 \ran$ with $\CA'_i = \lan \CO_{\PP^1}(i) \ran$
then $\Phi^{-1}(\CA'_i) = 0$ for $i = 1,2$.

Nevertheless, if the subcategories $\CA_i = \Phi^{-1}(\CA'_i)$ form a semiorthogonal decomposition of $\CT$
we will say that this decomposition is {\sf induced}\/ on $\CT$ by the semiorthogonal decomposition of $\CT'$
via $\Phi$.

\begin{lemma}\label{phist}
Let $\Phi:\CT \to \CT'$ be a fully faithful functor and $\CT' = \lan \CA'_1, \dots, \CA'_m\ran$
a semiorthogonal decomposition. It induces a semiorthogonal decomposition on $\CT$ if and only if
the image of $\Phi$ is stable under the projection functors of the semiorthogonal decomposition of $\CT'$.
\end{lemma}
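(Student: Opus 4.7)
The plan is to prove both implications. The ``only if'' direction follows immediately from Lemma~\ref{cc}: if the $\CA_i = \Phi^{-1}(\CA'_i)$ form a semiorthogonal decomposition of $\CT$, then $\Phi$ is tautologically compatible with the two decompositions, so Lemma~\ref{cc} supplies $\alpha'_i \circ \Phi \cong \Phi \circ \alpha_i$, and hence $\alpha'_i(\Phi(T)) \cong \Phi(\alpha_i(T)) \in \Phi(\CT)$ for every $T \in \CT$.

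For the ``if'' direction, assume $\Phi(\CT)$ is stable under every $\alpha'_i$. First recall that because $\Phi$ is a fully faithful triangulated functor, its essential image is a strictly full triangulated subcategory of $\CT'$: any morphism between two objects of $\Phi(\CT)$ lifts uniquely to $\CT$, and any cone of such a morphism in $\CT'$ is isomorphic to the image under $\Phi$ of the cone taken in $\CT$. Given $T \in \CT$, apply the semiorthogonal filtration~\eqref{tower} to $\Phi(T)$, producing a tower
$$
0 = T'_m \to T'_{m-1} \to \dots \to T'_0 = \Phi(T)
$$
whose successive cones are $\alpha'_k(\Phi(T)) \in \CA'_k$. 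By hypothesis every such cone lies in $\Phi(\CT)$, so a descending induction on $k$ (starting from $T'_m = 0$ and using that $\Phi(\CT)$ is triangulated) shows that each $T'_k$ itself lies in $\Phi(\CT)$.

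Now lift the tower to $\CT$: set $S_0 = T$, and for each $k \ge 1$ choose $S_k \in \CT$ with $\Phi(S_k) \cong T'_k$; by full faithfulness, each morphism $T'_k \to T'_{k-1}$ comes from a unique morphism $S_k \to S_{k-1}$ in $\CT$, and we let $B_k$ be its cone. Then $\Phi(B_k)$ is a cone of the morphism $\Phi(S_k) \to \Phi(S_{k-1})$, which under the chosen isomorphisms is the map $T'_k \to T'_{k-1}$; hence $\Phi(B_k) \cong \alpha'_k(\Phi(T)) \in \CA'_k$, and therefore $B_k \in \Phi^{-1}(\CA'_k) = \CA_k$. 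Semiorthogonality $\CA_i \subset \CA_j^\perp$ for $i < j$ is immediate from full faithfulness of $\Phi$ together with the semiorthogonality of the $\CA'_i$, so the constructed tower exhibits the desired semiorthogonal decomposition $\CT = \lan \CA_1, \dots, \CA_m \ran$. The only mildly subtle step is the inductive verification that the whole tower of $\Phi(T)$ lies in $\Phi(\CT)$; everything else is routine bookkeeping of lifts along a fully faithful functor.
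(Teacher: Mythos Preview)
Your proof is correct and follows essentially the same approach as the paper. The only organizational difference is that you first verify by induction that every $T'_k$ lies in $\Phi(\CT)$ and then lift the whole tower at once, whereas the paper builds the lift $T_k$ step by step (defining $T_{k+1}$ as the shifted cone of $T_k \to A_k$ and checking inductively that $\Phi(T_{k+1}) \cong T'_{k+1}$); both arguments use the same ingredients and the same key observation. One tiny point you left implicit: you should note that $S_m \cong 0$ because $\Phi(S_m) \cong T'_m = 0$ and $\Phi$ is fully faithful, so that your lifted tower really starts at $0$.
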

\begin{proof}
The ``only if'' part follows immediately from Lemma~\ref{cc}. For the if part we only have to prove
that every object $T$ of $\CT$ can be decomposed with respect to the collection of subcategories
$\CA_i = \Phi^{-1}(\CA'_i)$. So, let $T' = \Phi(T)$ and let $0 = T'_m \to T'_{m-1} \to \dots \to T'_1 \to T'_0 = T'$
be its filtration with factors in $\CA'_i$. Note that the factors are given by
$\alpha'_i(T') \cong \alpha'_i(\Phi(T))$. Since the image of $\Phi$ is stable under $\alpha'_i$,
it follows that $\alpha'_i(T') \cong \Phi(A_i)$ for some objects $A_i \in \CA_i$.
Let us check that these are the components of $T$. To do this we have to construct a filtration
$0 = T_m \to T_{m-1} \to \dots \to T_1 \to T_0 = T$ such that its factors are isomorphic to $A_i$.
We do it inductively. First of all, we put $T_0 = T$. Now assume that $T_i$ is constructed in such a way
that $\Phi(T_i) \cong T'_i$. Then we compose this isomorphism with the map $T'_i \to \alpha'_i(T') \cong \Phi(A_i)$.
Since $\Phi$ is fully faithful, the resulted map comes from a map $T_i \to A_i$ in $\CT$.
We take $T_{i+1}$ to be the cone of this morphism shifted by $-1$. Applying to the triangle $T_{i+1} \to T_i \to A_i$
the functor $\Phi$ we conclude that $\Phi(T_{i+1}) \cong T'_{i+1}$. Applying this procedure $m$ times
we construct $T_m$. Note that $\Phi(T_m) \cong T'_m = 0$. Since $\Phi$ is fully faithful, it follows that
$T_m = 0$, so the desired filtration of $T$ is constructed.
\end{proof}

\begin{lemma}\label{endo}
Let $\Phi:\CT \to \CT'$ be a fully faithful embedding, and assume that
$\CT' = \lan \CA'_1, \dots, \CA'_m\ran$ and $\CT = \lan \CA_1, \dots, \CA_m \ran$
are semiorthogonal decompositions compatible with $\Phi$. Let $\Psi':\CT' \to \CT'$
be an endofunctor, such that $\CT$ and all $\CA'_i$ are stable under $\Psi'$.
Then every $\CA_i$ is also stable under $\Psi'$.
\end{lemma}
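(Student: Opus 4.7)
The plan is to read off the conclusion directly from the characterization of a compatible semiorthogonal decomposition given by Lemma~\ref{un}. Identifying $\CT$ with its image $\Phi(\CT) \subset \CT'$, the hypothesis ``$\CT$ is stable under $\Psi'$'' says that $\Psi'(\Phi(\CT)) \subset \Phi(\CT)$; by full faithfulness of $\Phi$ this gives a well-defined induced endofunctor on $\CT$, and the content of the lemma is that this induced endofunctor preserves each $\CA_i$.

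Pick any $A \in \CA_i$. Since the decomposition $\CT = \lan \CA_1,\dots,\CA_m\ran$ is compatible with $\Phi$, we have $\Phi(A) \in \CA'_i$. Stability of $\CA'_i$ under $\Psi'$ gives $\Psi'(\Phi(A)) \in \CA'_i$, while stability of $\Phi(\CT)$ under $\Psi'$ gives $\Psi'(\Phi(A)) \cong \Phi(B)$ for some $B \in \CT$. In particular $\Phi(B) \in \CA'_i$, so $B \in \Phi^{-1}(\CA'_i)$.

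Now I invoke Lemma~\ref{un}: since by hypothesis $\CT = \lan \CA_1,\dots,\CA_m\ran$ is a semiorthogonal decomposition compatible with $\Phi$, the uniqueness part of that lemma forces $\CA_i = \Phi^{-1}(\CA'_i)$. Hence $B \in \CA_i$, and therefore $\Psi'(\Phi(A)) \cong \Phi(B) \in \Phi(\CA_i)$, which is exactly the stability of $\CA_i$ under $\Psi'$.

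There is really no technical obstacle here; the only subtle point is the proper identification of $\CA_i$ as $\Phi^{-1}(\CA'_i)$, which is the content of Lemma~\ref{un} and which is exactly what lets us upgrade the fact ``$\Phi(B) \in \CA'_i$'' to ``$B \in \CA_i$''. The stability of the ambient subcategory $\Phi(\CT)$ under $\Psi'$ is essential in order to produce such a $B$ in the first place.
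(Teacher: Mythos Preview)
Your proof is correct and follows essentially the same route as the paper: both arguments use Lemma~\ref{un} to identify $\CA_i = \Phi^{-1}(\CA'_i)$, then observe that $\Psi'(\Phi(\CA_i)) \subset \Psi'(\CA'_i) \subset \CA'_i$ and that $\Psi'(\Phi(\CA_i)) \subset \Phi(\CT)$, so the result lands back in $\Phi(\CA_i)$. The only cosmetic difference is that the paper phrases this at the level of subcategories (writing $\Phi\circ\Psi = \Psi'\circ\Phi$ for the induced endofunctor $\Psi$), while you argue element-by-element.
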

\begin{proof}
Since $\CT$ is stable under $\Psi'$ and $\Phi$ is fully faithful,
the restriction of $\Psi'$ to $\CT$ defines an endofunctor $\Psi:\CT \to \CT$,
such that $\Phi\circ\Psi = \Psi' \circ\Phi$.
Since $\CA_i = \Phi^{-1}(\CA'_i)$ we have to check that $\Phi(\Psi(\CA_i)) \subset \CA'_i$.
But $\Phi(\Psi(\CA_i)) = \Psi'(\Phi(\CA_i)) \subset \Psi'(\CA'_i) \subset \CA'_i$
since $\CA'_i$ is $\Psi'$-stable.
\end{proof}

\section{Extensions of a semiorthogonal decomposition}

Let $X$ be an algebraic variety and assume that we are given a semiorthogonal decomposition
of $\D^b(X)$. In this section we construct a compatible system of semiorthogonal decompositions
of the categories $\D^\perf(X) \subset \D^-(X) \subset \D_{qc}(X)$.

\subsection{Perfect complexes}

First of all we note that any strong semiorthogonal decomposition (see Defenition~\ref{strongsod}) of $\D^b(X)$
induces a semiorthogonal decomposition of the category of perfect complexes.

\begin{proposition}\label{sodp}
Let $\D^b(X) = \lan \CA_1,\dots,\CA_m \ran$ be a strong semiorthogonal decomposition.
Then there is a unique semiorthogonal decomposition of the category $\D^\perf(X)$
compatible with the natural embedding $\D^\perf(X) \to \D^b(X)$.
\end{proposition}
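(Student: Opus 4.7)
The plan is as follows. Uniqueness follows from Lemma~\ref{un} applied to the fully faithful embedding $\Phi\colon\D^\perf(X)\hookrightarrow\D^b(X)$: it forces the components of any compatible semiorthogonal decomposition on $\D^\perf(X)$ to be $\CA_k\cap\D^\perf(X)$.

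For existence, Lemma~\ref{phist} reduces the task to showing that $\D^\perf(X)$ is preserved by each projection functor $\alpha_k\colon\D^b(X)\to\D^b(X)$ of the given decomposition. I would carry this out by induction on $m$, after generalizing the statement to admissible subcategories $\CT\subset\D^b(X)$ equipped with a strong semiorthogonal decomposition, so that the induction can be applied to the subcategory $\lan\CA_2,\dots,\CA_m\ran$ (whose inherited decomposition is again strong). Regrouping the given decomposition as $\lan\CA_1,\lan\CA_2,\dots,\CA_m\ran\ran$, the strongness ensures that $\CA_1$ is admissible in $\D^b(X)$, and everything reduces to the two-term case: for a strong two-term decomposition $\CT=\lan\CA,\CB\ran$ of an admissible subcategory $\CT\subset\D^b(X)$, show that the projection $\iota_\CA\iota_\CA^*\colon\CT\to\CT$ preserves $\D^\perf(X)\cap\CT$. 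Granted this, $T_1=\Fiber(T\to\alpha_1(T))$ is perfect and lies in $\lan\CA_2,\dots,\CA_m\ran$, so the induction hypothesis takes care of the remaining factors via the tower~\eqref{tower}.

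The heart of the proof is the two-term case, where the strongness enters essentially. Admissibility of $\CA$ in $\CT$ provides both a left adjoint $\iota_\CA^*$ and a right adjoint $\iota_\CA^!$ of the inclusion, both landing inside $\CA$, and also yields a second decomposition $\CT=\lan\CA^\perp,\CA\ran$. My approach is to use these two adjoints and the resulting pair of filtration triangles, combined with an intrinsic characterization of perfect complexes (for instance, by finite Tor-dimension, as in the reasoning around Lemma~\ref{fca}), to conclude that $\iota_\CA^*T$ is perfect whenever $T$ is.

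The main obstacle is exactly this two-term step. Plain left-admissibility of $\CA$ is insufficient --- there are simple examples in which the analogous induced decomposition of $\D^\perf$ fails --- so one must genuinely exploit the right adjoint $\iota_\CA^!$ provided by strongness, and combine it with a hands-on property of perfect complexes in order to transfer perfection through the projection.
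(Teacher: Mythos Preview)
Your reductions are correct: uniqueness via Lemma~\ref{un} and existence via Lemma~\ref{phist} (reducing to stability of $\D^\perf(X)$ under the projection functors) is exactly how the paper proceeds. For existence, however, the paper does not argue further --- it simply invokes \cite{O2},~1.10 and~1.11. So what you are attempting is a reconstruction of Orlov's argument, and your outline (induction on $m$, regrouping as $\lan\CA_1,\lan\CA_2,\dots,\CA_m\ran\ran$, reducing to the two-term case with $\CA$ admissible) has the right shape.

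The gap is precisely where you place it, and the missing ingredient is the following intrinsic characterization (which is the content of the cited lemma in \cite{O2}): an object $P\in\D^b(X)$ is perfect if and only if for every $G\in\D^b(X)$ one has $\Hom(P,G[n])=0$ for $n\gg 0$. This is not what Lemma~\ref{fca} gives you; that lemma says functors \emph{out of} $\D^\perf(X)$ have finite amplitude, whereas here you need a criterion \emph{for} perfection. Once you have this criterion, the two-term step is clean. With $\CT\subset\D^b(X)$ right admissible, $\CT=\lan\CA,\CB\ran$, and $\CA$ admissible in $\CT$, let $T\in\CT\cap\D^\perf(X)$ and set $A=\iota_\CA^*T$. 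Given any $G\in\D^b(X)$, first use right admissibility of $\CT$ to write $G'\to G\to G''$ with $G'\in\CT$ and $G''\in\CT^\perp$; then $\Hom(A,G''[n])=0$ since $A\in\CT$. Next use right admissibility of $\CA$ in $\CT$ to write $\iota_\CA^!G'\to G'\to C$ with $C\in\CA^\perp\cap\CT$; then $\Hom(A,C[n])=0$, while
\[
\Hom(A,\iota_\CA^!G'[n])=\Hom(\iota_\CA^*T,\iota_\CA^!G'[n])=\Hom(T,\iota_\CA^!G'[n])=0\quad\text{for }n\gg 0,
\]
the second equality by the $(\iota_\CA^*,\iota_\CA)$-adjunction and the last because $T$ is perfect. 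Hence $A$ is perfect, and the induction runs.

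One small correction to your setup: in the inductive step you only need $\CT=\lan\CA_k,\dots,\CA_m\ran$ to be \emph{right} admissible in $\D^b(X)$, which is automatic from the ambient decomposition; full admissibility of $\CT$ is neither needed nor evidently available.
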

\begin{proof}
The existence of a semiorthogonal decomposition of $\D^\perf(X)$
compatible with that of $\D^b(X)$ follows from \cite{O2}, 1.10 and 1.11.
Moreover, it follows from Lemma~\ref{un} that the components of this decomposition
are given by
\begin{equation}\label{ap}
\CA_i^\perf = \CA_i \cap \D^\perf(X)
\end{equation}
and that the decomposition is unique.
\end{proof}

\subsection{Unbounded quasicoherent complexes}

Now we are going to show that any (not necessarily strong) semiorthogonal decomposition of $\D^\perf(X)$
induces a semiorthogonal decomposition of the unbounded derived category
of quasicoherent sheaves~$\D_{qc}(X)$.

\begin{proposition}\label{sodqc}
Let $\D^\perf(X) = \lan \CA_1^\perf,\dots,\CA_m^\perf \ran$ be a semiorthogonal decomposition.
Then there is a unique semiorthogonal decomposition $\D_{qc}(X) = \lan \HCA_1,\dots,\HCA_m \ran$
compatible with the natural embedding $\D^\perf(X) \to \D_{qc}(X)$
and with closed under arbitrary direct sums components.
The projection functors $\halpha_i$ of this decomposition commute with direct sums
and homotopy colimits.

Moreover, if the initial decomposition of the category $\D^\perf(X)$ is induced by a semiorthogonal decomposition
$\D^b(X) = \lan \CA_1,\dots,\CA_m \ran$ of $\D^b(X)$ the projection functors of which
have finite right cohomological amplitude then the obtained decomposition
of $\D_{qc}(X)$ is compatible with the natural embedding $\D^b(X) \to \D_{qc}(X)$ as well.
\end{proposition}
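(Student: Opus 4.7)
The plan is to define $\HCA_i \subset \D_{qc}(X)$ as the minimal triangulated subcategory of $\D_{qc}(X)$ containing $\CA_i^\perf$ and closed under arbitrary direct sums, then to check in sequence: (i) semiorthogonality of the $\HCA_i$; (ii) existence of the tower~\eqref{tower} for every object of $\D_{qc}(X)$; (iii) commutation of the projection functors with direct sums and homotopy colimits; (iv) compatibility with $\D^b(X)\hookrightarrow\D_{qc}(X)$ under the finite right cohomological amplitude assumption. Uniqueness will follow from Lemma~\ref{r}: any competing decomposition with components closed under direct sums and containing $\CA_i^\perf$ must, by minimality, contain each $\HCA_i$, and thus equal it componentwise.

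Semiorthogonality rests on the compactness of perfect complexes. Fix $i<j$. For each $A_j\in\CA_j^\perf$ the functor $\Hom(A_j,-[k])$ commutes with arbitrary direct sums, so the full subcategory of objects $F$ with $\Hom(A_j,F[k])=0$ for all $k$ is triangulated and closed under direct sums; by the semiorthogonality of the perfect decomposition it contains $\CA_i^\perf$, hence $\HCA_i$. Fixing now $F\in\HCA_i$, the full subcategory of objects $G$ with $\Hom(G,F[k])=0$ for all $k$ is again triangulated and closed under direct sums (a direct sum in the source turns $\Hom$ into a product of zeros) and contains $\CA_j^\perf$, hence $\HCA_j$. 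This gives $\HCA_i\subset\HCA_j^\perp$ for $i<j$.

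For (ii), let $\CS\subset\D_{qc}(X)$ be the class of objects admitting a tower~\eqref{tower} with factors in the $\HCA_i$. The perfect decomposition gives $\D^\perf(X)\subset\CS$, and $\CS$ is closed under arbitrary direct sums because each $\HCA_i$ is. The semiorthogonality established above lets us run the argument of Lemma~\ref{ff} for objects of $\CS$: any morphism $F\to G$ between elements of $\CS$ extends uniquely to a morphism of their towers. For a distinguished triangle $F\to G\to H$ with $F,G\in\CS$, forming the componentwise cones of the resulting morphism of towers (and invoking the $3\times 3$ lemma) produces a tower for $H$ whose factors are cones of morphisms inside each $\HCA_i$, so lie in $\HCA_i$. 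Thus $\CS$ is a triangulated subcategory of $\D_{qc}(X)$ closed under direct sums and containing $\D^\perf(X)$, so $\CS=\D_{qc}(X)$ by Lemma~\ref{perf-gen-qc}. The projections $\halpha_i$ then commute with direct sums by construction (the tower of a direct sum is the direct sum of towers), and with homotopy colimits by Lemma~\ref{hclc}.

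The main obstacle is the last compatibility claim. It reduces to showing $\CA_i\subset\HCA_i$: once this is established, the $\D^b(X)$-tower of any $F\in\D^b(X)$ is also an $\HCA$-tower, so uniqueness forces $\halpha_i(F)=\alpha_i(F)\in\CA_i\subset\D^b(X)$. To prove the inclusion, take $F\in\CA_i$ and a stabilizing-in-finite-degrees perfect approximation $F_k\to F$ from Lemma~\ref{dmsfd}. Since the perfect decomposition is induced by the $\D^b(X)$-decomposition, $\alpha_i^\perf=\alpha_i|_{\D^\perf(X)}$, so $\alpha_i(F_k)\in\CA_i^\perf$ together with compatible maps $\alpha_i(F_k)\to\alpha_i(F)=F$. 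The cone of $F_k\to F$ lies in $\D^{\le-k}(X)$ for $k$ large, so if $b$ denotes the right cohomological amplitude of $\alpha_i$, the cone of $\alpha_i(F_k)\to F$ lies in $\D^{\le-k+b}(X)$; for each fixed $n$ this forces $\CH^n(\alpha_i(F_k))\cong\CH^n(F)$ once $k\gg-n+b$. Thus $\dlim\CH^n(\alpha_i(F_k))=\CH^n(F)$, and Lemma~\ref{mhcl} gives $F\cong\hocolim\alpha_i(F_k)\in\HCA_i$, as required.
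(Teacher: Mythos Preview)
Your proof is correct and follows essentially the same approach as the paper: the definition of $\HCA_i$, the semiorthogonality via compactness, the generation via Lemma~\ref{perf-gen-qc}, and the compatibility argument via perfect approximation and the right cohomological amplitude bound all match. The only cosmetic differences are that you make the ``$\CS$ is triangulated'' step explicit via the $3\times 3$ lemma (the paper simply invokes that the $\HCA_i$ generate and are semiorthogonal), and for the last part you show $\CA_i\subset\HCA_i$ directly whereas the paper computes $\halpha_i(A)\cong\hocolim\alpha_i(\sigma^{\ge -n}P^\bullet)\cong A$; these are the same argument phrased slightly differently.
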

\begin{proof}
Define the subcategory $\HCA_i \subset\D_{qc}(X)$ to be the subcategory of $\D_{qc}(X)$ obtained
by iterated addition of cones to the closure of $\CA_i^\perf$ in $\D_{qc}(X)$ under all direct sums.
Let us check that the categories $\HCA_i$ form a semiorthogonal decomposition of $\D_{qc}(X)$.
First of all, if $j > i$,
$A_j^l \in \CA_j^\perf$, $A_i^k \in \CA_i^\perf$ then
$$
\Hom(\oplus_l A_j^l,\oplus_k A_i^k) \cong
\prod_l \Hom(A_j^l,\oplus_k A_i^k) \cong
\prod_l \bigoplus_k \Hom(A_j^l,A_i^k) = 0
$$
(in the second isomorphism we used the fact that $A_j^l$ are perfect complexes, hence compact objects of $\D_{qc}(X)$).
Addition of cones does not spoil semiorthogonality (see Lemma~\ref{lrso}), hence
the collection of subcategories $\HCA_1,\dots,\HCA_m$ is semiorthogonal.
Note also that a direct sum of cones is a cone of direct sums by~\cite{KSch}, 10.1.19,
so $\HCA_i$ is a closed under all direct sums triangulated subcategory of $\D_{qc}(X)$.

Now consider the triangulated subcategory $\lan \HCA_1,\dots,\HCA_m \ran$ generated in $\D_{qc}(X)$
by the subcategories $\HCA_1,\dots,\HCA_m$. It is clear that it is a triangulated subcategory
of $\D_{qc}(X)$ closed under all direct sums. Moreover, it contains $\lan \CA_1^\perf,\dots,\CA_m^\perf \ran = \D^\perf(X)$.
Hence it coincides with $\D_{qc}(X)$ by Lemma~\ref{perf-gen-qc}.
This means that $\lan \HCA_1,\dots,\HCA_m \ran = \D_{qc}(X)$.
The uniqueness of such semiorthogonal decomposition is evident by Lemma~\ref{r}.

The compatibility with the embedding $\D^\perf(X) \to \D_{qc}(X)$ and
closedness under arbitrary direct sums are evident. Commutativity of $\halpha_i$
with arbitrary direct sums follows immediately and for homotopy colimits we apply Lemma~\ref{hclc}.

Further, to check that the constructed semiorthogonal decomposition of $\D_{qc}(X)$ is compatible with
the semiorthogonal decomposition of $\D^b(X)$ we have to check that for any $A \in \CA_i \subset \D^b(X)$
we have $\halpha_i(A) \cong A$. Indeed, choose a locally free resolution $P^\bullet \to A$,
and take $A^n = \sigma^{\ge -n}(P^\bullet)$, the stupid truncation of the complex $P^\bullet$ at degree $-n$,
so that we have a distinguished triangle
$$
\sigma^{\ge -n}P^\bullet \to A \to \sigma^{\le -n-1}P^\bullet.
$$
Note that the direct system $\sigma^{\ge -n}P^\bullet$ approximates $A$ in the sense of paragraph~\ref{ss_appr},
hence by Lemma~\ref{appr} we have an isomorphism $\hocolim (\sigma^{\ge -n}P^\bullet) \cong A$.
Therefore
$$
\halpha_i(A) \cong
\halpha_i(\hocolim (\sigma^{\ge -n}P^\bullet)) \cong
\hocolim \halpha_i(\sigma^{\ge -n}P^\bullet) \cong
\hocolim \alpha_i(\sigma^{\ge -n}P^\bullet),
$$
the last isomorphism is due to the fact that $\sigma^{\ge -n}P^\bullet$ is a perfect complex.
So, it suffices to check that $\hocolim \alpha_i(\sigma^{\ge -n}P^\bullet) \cong A$.
Indeed, applying $\alpha_i$ to the above triangle we obtain
$$
\alpha_i(\sigma^{\ge -n}P^\bullet) \to A \to \alpha_i(\sigma^{\le -n-1}P^\bullet).
$$
Let $(a_i,b_i)$ be the cohomological amplitude of the functor $\alpha_i$.
Since $\sigma^{\le -n-1}P^\bullet \in \D^{\le -n-1}(X)$ we have
$\alpha_i(\sigma^{\le -n-1}P^\bullet) \in \D^{\le -n-1+b_i}(X)$,
hence $\alpha_i(\sigma^{\ge -n}P^\bullet)$ approximates $A$, so
$\hocolim \alpha_i(\sigma^{\ge -n}P^\bullet) \cong A$.
%
\end{proof}


\subsection{Bounded above coherent complexes}

The next step is the following.

\begin{proposition}\label{sodm}
Let $\D^\perf(X) = \lan \CA_1^\perf,\dots,\CA_m^\perf \ran$ be a semiorthogonal decomposition.
Then there is a unique semiorthogonal decomposition of $\D^-(X)$ compatible with this decomposition
of $\D^\perf(X)$ and with the decomposition of $\D_{qc}(X)$ constructed in Proposition~\ref{sodqc}
with respect to the natural embeddings $\D^\perf(X) \to \D^-(X) \to \D_{qc}(X)$.
Its components are closed under homotopy colimits of stabilizing in finite degrees direct systems.
\end{proposition}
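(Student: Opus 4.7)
The plan is to define $\CA_i^- := \HCA_i \cap \D^-(X)$ and verify that these subcategories form the desired semiorthogonal decomposition. Since the inclusion $\D^-(X) \hookrightarrow \D_{qc}(X)$ is fully faithful, Lemma~\ref{un} forces this to be the only candidate, so uniqueness and compatibility with the $\D_{qc}(X)$-decomposition are automatic once existence is established. Semiorthogonality is inherited from the semiorthogonality of the $\HCA_i$ proved in Proposition~\ref{sodqc}, so the core of the argument is to show that every $F \in \D^-(X)$ admits a filtration with factors in the $\CA_i^-$. Via the canonical tower for $F$ in the $\D_{qc}(X)$-decomposition this reduces to the single assertion $\halpha_i(F) \in \D^-(X)$ for all $i$.

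To prove the latter, I would apply Lemma~\ref{dmsfd} to approximate $F$ by a stabilizing-in-finite-degrees direct system of perfect complexes $\{F_k\}$ with $\hocolim F_k \cong F$. Since $\halpha_i$ commutes with homotopy colimits by Proposition~\ref{sodqc} and restricts to $\alpha_i^\perf$ on $\D^\perf(X)$ by compatibility with the embedding $\D^\perf(X) \hookrightarrow \D_{qc}(X)$, one obtains $\halpha_i(F) \cong \hocolim \alpha_i^\perf(F_k)$. By Lemma~\ref{fca}, $\alpha_i^\perf$ has finite cohomological amplitude, with right bound $b_i$ say, so if the cone of $F_k \to F_{k+1}$ has cohomology concentrated in degrees $< n$ then the cone of $\alpha_i^\perf(F_k) \to \alpha_i^\perf(F_{k+1})$ has cohomology concentrated in degrees $< n + b_i$. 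Hence $\{\alpha_i^\perf(F_k)\}$ is a stabilizing-in-finite-degrees direct system taking values in $\D^\perf(X) \subset \D^-(X)$, and Lemma~\ref{sfd} identifies its homotopy colimit with an object of $\D^-(X)$; so $\halpha_i(F) \in \CA_i^-$.

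With this preservation result the canonical tower $0 = T_m \to \dots \to T_0 = F$ from the $\D_{qc}(X)$-decomposition has factors $\halpha_i(F) \in \CA_i^-$, and a downward induction on $i$ using the triangles $T_i \to T_{i-1} \to \halpha_i(F)$ shows each $T_i$ lies in $\D^-(X)$, yielding the desired filtration. Compatibility with $\D^\perf(X) \hookrightarrow \D^-(X)$ is automatic from the same argument, since for $F \in \D^\perf(X)$ the component $\halpha_i(F) = \alpha_i^\perf(F)$ already lies in $\CA_i^\perf \subset \CA_i^-$. Closure under homotopy colimits of stabilizing-in-finite-degrees direct systems follows from $\HCA_i$ being closed under arbitrary direct sums (hence under homotopy colimits) combined with Lemma~\ref{sfd}. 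The main obstacle throughout is the preservation statement $\halpha_i(\D^-(X)) \subset \D^-(X)$, for which the finite cohomological amplitude of $\alpha_i^\perf$ (Lemma~\ref{fca}) is the decisive input, converting stabilization of the approximating perfect system into stabilization of its image under $\alpha_i^\perf$.
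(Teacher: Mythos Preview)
Your proposal is correct and follows essentially the same route as the paper: define $\CA_i^- = \HCA_i \cap \D^-(X)$, reduce existence to the preservation statement $\halpha_i(\D^-(X)) \subset \D^-(X)$, and prove the latter by approximating $F$ via Lemma~\ref{dmsfd}, passing $\halpha_i$ through the homotopy colimit, invoking finite cohomological amplitude (Lemma~\ref{fca}) to retain stabilization in finite degrees, and concluding with Lemma~\ref{sfd}. The paper packages the passage from preservation to a semiorthogonal decomposition via Lemma~\ref{phist} rather than spelling out the tower induction, but this is exactly your argument in compressed form.
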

\begin{proof}
We have to check that $\D^-(X)$ is stable under the projection functors $\halpha_i$.
Then by Lemma~\ref{phist} it would follow that the subcategories
\begin{equation}\label{am}
\CA_i^- = \HCA_i \cap \D^-(X)
\end{equation}
give a semiorthogonal decomposition, which is evidently compatible with those of $\D^\perf(X)$
and $\D_{qc}(X)$. So, we take any $F \in \D^-(X)$. By Lemma~\ref{dmsfd} there exists a stabilizing
in finite degrees direct system of perfect complexes $F_k$ such that $F \cong \hocolim F_k$.
It follows that
$$
\halpha_i(F) \cong
\halpha_i(\hocolim F_k) \cong
\hocolim \alpha_i(F_k)
$$
(the second isomorphism follows from Proposition~\ref{sodqc}).
But by Lemma~\ref{fca} the direct system $\alpha_i(F_k)$ also stabilizes in finite degrees,
so it follows from Lemma~\ref{sfd} that $\hocolim\alpha_i(F_k) \in \D^-(X)$.

The last claim is clear since both $\HCA_i$ and $\D^-(X)$ are closed under
homotopy colimits of stabilizing in finite degrees direct systems.
\end{proof}

\subsection{$S$-linearity}

Assume that $X$ is a scheme over $S$, that is we are given a map $f:X \to S$.
Recall that any strong semiorthogonal decomposition of $\D^b(X)$ by Proposition~\ref{sodp} induces a compatible
semiorthogonal decomposition of $\D^\perf(X)$, which in its turn by Propositions~\ref{sodqc} and \ref{sodm}
induces compatible semiorthogonal decompositions of $\D_{qc}(X)$ and $\D^-(X)$.

\begin{lemma}\label{isl}
If the initial semiorthogonal decomposition of the category $\D^b(X)$ is $S$-linear then
the induced semiorthogonal decomposition of $\D^\perf(X)$ is $S$-linear. Similarly,
if the semiorthogonal decomposition of the category $\D^\perf(X)$ is $S$-linear then
the induced semiorthogonal decompositions of $\D_{qc}(X)$ and $\D^-(X)$ are $S$-linear as well.
\end{lemma}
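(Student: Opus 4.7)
The plan is to treat the two claims separately, exploiting in each case the explicit description of the induced components. The workhorse observation is that for any $G \in \D^\perf(S)$ the endofunctor $\Psi_G = -\otimes f^*G$ of $\D_{qc}(X)$ is triangulated, commutes with arbitrary direct sums (hence, by Lemma~\ref{hclc}, with homotopy colimits), preserves $\D^\perf(X)$ (the tensor product of perfect complexes is perfect), and preserves $\D^-(X)$ (since $f^*G$ is perfect, hence of finite $\Tor$-dimension, and tensoring a bounded-above coherent complex by a bounded complex of locally free sheaves of finite rank stays in $\D^-(X)$).

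For the first assertion I would apply Lemma~\ref{endo} to the fully faithful embedding $\D^\perf(X) \hookrightarrow \D^b(X)$, the compatible semiorthogonal decompositions with components $\CA_i$ and $\CA_i^\perf = \CA_i \cap \D^\perf(X)$, and the endofunctor $\Psi' = \Psi_G$ of $\D^b(X)$. Its hypotheses are satisfied: $\D^\perf(X)$ is $\Psi_G$-stable by the remarks above, and each $\CA_i$ is $\Psi_G$-stable by the $S$-linearity assumption on $\D^b(X)$. The conclusion is precisely $\Psi_G(\CA_i^\perf) \subset \CA_i^\perf$, which is $S$-linearity of the induced decomposition of $\D^\perf(X)$.

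For the second assertion Lemma~\ref{endo} goes in the wrong direction, so I would argue directly from the defining property of $\HCA_i$. Consider the full subcategory
$$\CS_i = \{F \in \D_{qc}(X) \,:\, \Psi_G(F) \in \HCA_i\}.$$
Since $\Psi_G$ is triangulated and commutes with arbitrary direct sums, and $\HCA_i$ is a triangulated subcategory closed under arbitrary direct sums, $\CS_i$ inherits both properties; moreover $\CA_i^\perf \subset \CS_i$ by the $S$-linearity assumption on the decomposition of $\D^\perf(X)$. The minimality property of $\HCA_i$ used in the construction in Proposition~\ref{sodqc} then forces $\HCA_i \subset \CS_i$, i.e.\ $\Psi_G$ preserves $\HCA_i$. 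For the subcategories $\CA_i^- = \HCA_i \cap \D^-(X)$ the conclusion then follows from the stability of both factors of the intersection under $\Psi_G$. I do not expect any serious obstacle; the only mildly subtle point is the closure of $\CS_i$ under direct sums and cones, which is immediate from the corresponding properties of $\Psi_G$ and $\HCA_i$.
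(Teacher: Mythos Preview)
Your proposal is correct and essentially matches the paper's own proof. The first part is identical (Lemma~\ref{endo} applied to $\D^\perf(X) \hookrightarrow \D^b(X)$); for $\HCA_i$ the paper also uses the minimality characterization together with the fact that $\Psi_G$ is exact and commutes with direct sums, your $\CS_i$ just makes this explicit; for $\CA_i^-$ the paper invokes Lemma~\ref{endo} with $\D^-(X) \hookrightarrow \D_{qc}(X)$ while you use the intersection description $\CA_i^- = \HCA_i \cap \D^-(X)$ directly, which amounts to the same thing since Lemma~\ref{endo} is proved exactly by unwinding $\CA_i = \Phi^{-1}(\CA'_i)$.
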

\begin{proof}
Take any $G \in \D^\perf(S)$. Then $\Psi_G(H) := H\otimes f^*G$ is an endofunctor of $\D_{qc}(X)$
which preserves $\D^-(X)$, $\D^b(X)$ and $\D^\perf(X)$ as well as the initial semiorthogonal
decomposition. It follows from Lemma~\ref{endo} that the semiorthogonal decomposition~\eqref{ap}
of $\D^\perf(X)$ is stable under $\Psi_G$. Now let us check that each component $\HCA_i$
of the semiorthogonal decomposition of $\D_{qc}(X)$ is stable under $\Psi_G$.
Indeed, by definition $\HCA_i$ is the smallest triangulated subcategory of $\D_{qc}(X)$
containing $\CA_i^\perf$ and closed under arbitrary direct sums. But the functor $\Psi_G$
commutes with direct sums (see~\cite{BV}, 3.3.4) and is exact which implies the claim.
Again applying Lemma~\ref{endo} we conclude that the semiorthogonal decomposition~\eqref{am} of $\D^-(X)$
is also stable under $\Psi_G$. Since this is true for all $G \in \D^\perf(S)$,
we see that all these decompositions are $S$-linear.
\end{proof}

Actually, for the components of semiorthogonal decompositions of $\D_{qc}(X)$ and $\D^-(X)$
we have a stronger result.

\begin{lemma}\label{isl1}
If $\D^-(X) = \lan \CA_i^-,\dots,\CA_m^- \ran$ is an $S$-linear semiorthogonal decomposition
with components closed under homotopy colimits of stabilizing in finite degrees direct systems
then $\CA_i^-\otimes f^*\D^-(S) \subset \CA_i^-$. Similarly,
if $\D_{qc}(X) = \lan \HCA_i,\dots,\HCA_m \ran$ is an $S$-linear semiorthogonal decomposition
with components closed under arbitrary direct sums then $\HCA_i\otimes f^*\D_{qc}(S) \subset \HCA_i$.
\end{lemma}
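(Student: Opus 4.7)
The plan is to reduce both claims to Lemma~\ref{isl}, which already handles tensoring with perfect complexes on $S$. The quasicoherent case will follow from a clean generation argument via Lemma~\ref{perf-gen-qc}; the bounded-above case requires instead an approximation argument based on Lemma~\ref{dmsfd}.

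For the second claim, I would fix $A \in \HCA_i$ and consider the full subcategory $\CT_A := \{ G \in \D_{qc}(S) \mid A \otimes f^*G \in \HCA_i \}$ of $\D_{qc}(S)$. Exactness of $f^*$ and of $A \otimes(-)$ makes $\CT_A$ triangulated, and Lemma~\ref{isl} gives $\D^\perf(S) \subset \CT_A$. The key observation is that $\CT_A$ is closed under arbitrary direct sums: both $f^*$ and $A \otimes (-)$ commute with arbitrary direct sums (by the remark after Lemma~\ref{hclc}), and $\HCA_i$ is closed under arbitrary direct sums by hypothesis. Lemma~\ref{perf-gen-qc} then forces $\CT_A = \D_{qc}(S)$.

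For the first claim, I would fix $A \in \CA_i^-$ and $G \in \D^-(S)$. By Lemma~\ref{dmsfd}, pick a stabilizing-in-finite-degrees direct system of perfect complexes $G_k$ approximating $G$; explicitly, I would take $G_k = \sigma^{\ge -k}P^\bullet$ for a chosen locally free resolution $P^\bullet$ of $G$, as in the proof of that lemma. Since tensor products and pullbacks commute with homotopy colimits (Lemma~\ref{hclc}), $A \otimes f^*G \cong \hocolim (A \otimes f^*G_k)$, and each term lies in $\CA_i^-$ by Lemma~\ref{isl}. Granting that this new system stabilizes in finite degrees, the hypothesis on $\CA_i^-$ yields $A \otimes f^*G \in \CA_i^-$.

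The only nontrivial step, and hence the only real obstacle, is verifying that $\{A \otimes f^*G_k\}$ stabilizes in finite degrees, and this reduces to a direct degree count using the explicit form of $G_k$. The cone of $G_k \to G_{k+1}$ is the locally free sheaf $P^{-k-1}$ placed in degree $-k-1$, so the cone of $A \otimes f^*G_k \to A \otimes f^*G_{k+1}$ equals $(A \otimes f^*P^{-k-1})[k+1]$. Since $f^*P^{-k-1}$ is locally free (hence flat) and $A$ has cohomology concentrated in some range $\le p$, this cone has cohomology in degrees $\le p - k - 1$. Consequently, for each fixed $n$ and all $k > p - n$, the induced map $\tau^{\ge n}(A \otimes f^*G_k) \to \tau^{\ge n}(A \otimes f^*G_{k+1})$ is an isomorphism, which is precisely the stabilization required.
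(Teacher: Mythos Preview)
Your proof is correct and follows essentially the same route as the paper: the quasicoherent claim is handled by the generation principle of Lemma~\ref{perf-gen-qc} combined with closure under direct sums, and the bounded-above claim by approximating $G$ via Lemma~\ref{dmsfd} and checking that the resulting system stabilizes in finite degrees. The only difference is cosmetic: where you give an explicit degree count using the cone $P^{-k-1}[k+1]$, the paper dispatches the stabilization in one line by observing that $f^*$ and $\otimes$ are right exact (so the cone of $A\otimes f^*G_k \to A\otimes f^*G_{k+1}$ lies in $\D^{\le p-k-1}$).
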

\begin{proof}
Take any $G$ in $\D^-(S)$. Applying Lemma~\ref{dmsfd} choose a stabilizing
in finite degrees direct system of perfect complexes $G_k$ approximating $G$ so that $G \cong \hocolim G_k$.
Then for any $F \in \CA_i^-$ we have $F \otimes f^*G \cong F \otimes f^*(\hocolim G_k) \cong \hocolim(F \otimes f^*G_k)$.
Since the functors $\otimes$ and $f^*$ are right exact, it follows that the direct system $F\otimes f^*G_k$
stabilizes in finite degrees. Hence its homotopy colimit belongs to $\CA_i^-$ since $\CA_i^-$
is $S$-linear and closed under homotopy colimits of stabilizing in finite degrees direct systems.

For the second claim recall that by Lemma~\ref{perf-gen-qc} the category $\D_{qc}(S)$ can be obtained by iterated addition of cones
to the closure of $\D^\perf(S)$ under arbitrary direct sums. Further, we know by Lemma~\ref{isl}
that $\HCA_i\otimes f^*G \subset \HCA_i$ for any perfect $G$. Since $f^*$ and $\otimes$ commute with
direct sums, it follows that the same is true for $G$ being arbitrary direct sum of perfect complexes.
Finally, since $f^*$ and $\otimes$ are exact and $\HCA_i$ is triangulated, the same embedding holds
for arbitrary $G$.
\end{proof}

\section{Change of a base}

Let $f:X \to S$ be an algebraic map. Consider a base change $\phi: T \to S$ and denote by $X_T = X\times_S T$
the fiber product. Denote the projections $X_T \to T$ and $X_T \to X$ by $f$ and $\phi$ respectively,
so that we have a cartesian diagram
\begin{equation}\label{xt}
\vcenter{\xymatrix{
X_T \ar[r]^\phi \ar[d]_f & X \ar[d]^f \\ T \ar[r]^\phi & S
}}
\end{equation}
Throughout this section we assume that the base change $\phi$ is faithful for $f:X \to S$
(see paragraph~\ref{ssfbc} for the definition).

\subsection{Base change for perfect complexes}

Let $\D^\perf(X) = \lan \CA_1^\perf, \dots, \CA_m^\perf \ran$ be an $S$-linear semiortho\-gonal decomposition.
Let $\CA_{iT}^p$ denote the minimal triangulated subcategory of $\D^\perf(X_T)$ closed under taking direct summands
and containing all objects of the form $\phi^*F \otimes f^*G$ with $F \in \CA_i^\perf$, $G \in \D^\perf(T)$:
\begin{equation}\label{aitp}
\CA_{iT}^p = \lan \phi^*\CA_i^\perf \otimes f^*\D^\perf(T) \ran.
\end{equation}
Note that the subcategory $\CA_{iT}^p \subset \D^\perf(X_T)$ is $T$-linear,
since the generating class $\phi^*\CA_i^\perf \otimes f^*\D^\perf(T)$ is $T$-linear,
and the process of adding cones and direct summands preserves $T$-linearity.

\begin{proposition}\label{dp}
We have $\D^\perf(X_T) = \lan \CA_{1T}^p, \dots, \CA_{mT}^p \ran$,
a $T$-linear semiorthogonal decomposition compatible
with the functor $\phi^*:\D^\perf(X) \to \D^\perf(X_T)$.
\end{proposition}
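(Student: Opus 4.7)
The plan is to verify in turn: semiorthogonality of the collection $\{\CA_{iT}^p\}$, generation of $\D^\perf(X_T)$ by these subcategories, existence of the SOD filtration, and the compatibility and $T$-linearity statements. The faithful base change assumption enters essentially only in the first step.

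For semiorthogonality, I would apply Lemma~\ref{lrso} to reduce to checking that $\RHom(\phi^*F_j \otimes f^*G_j,\, \phi^*F_i \otimes f^*G_i[k]) = 0$ for all $k\in\ZZ$, whenever $j > i$, $F_i \in \CA_i^\perf$, $F_j \in \CA_j^\perf$, and $G_i, G_j \in \D^\perf(T)$. Using perfectness of $F_j$ and $G_j$ to dualize them out, together with the projection formula, this global $\RHom$ rewrites as
$$
\RGamma\bigl(T,\ f_*\phi^*\RCHom(F_j,F_i) \otimes (G_i \otimes G_j^\vee)\bigr).
$$
The hypothesis that $\phi$ is faithful for $f$ yields $f_*\phi^* \cong \phi^*f_*$, and then Lemma~\ref{slinso} applied to the $S$-linear semiorthogonal pair $(\CA_i^\perf, \CA_j^\perf)$ forces $f_*\RCHom(F_j, F_i) = 0$, whence the desired vanishing.

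For generation, the key geometric input is that $X_T$ carries an ample line bundle of the form $\phi^*L_X \otimes f^*L_T$: separatedness of $S$ makes $X_T \hookrightarrow X\times T$ a closed immersion, and for ample $L_X$ on $X$ and $L_T$ on $T$ the line bundle $L_X \boxtimes L_T$ is ample on $X\times T$. By the standard fact that for a quasiprojective variety the negative powers of an ample line bundle generate the category of perfect complexes as a triangulated subcategory with direct summands, $\D^\perf(X_T)$ is generated by objects all of which lie in $\phi^*\D^\perf(X) \otimes f^*\D^\perf(T)$. Decomposing any object of $\D^\perf(X)$ via the given SOD and applying $\phi^*$ then shows that $\phi^*\D^\perf(X) \otimes f^*\D^\perf(T) \subset \lan \CA_{1T}^p, \dots, \CA_{mT}^p \ran$, and hence the right-hand side exhausts $\D^\perf(X_T)$.

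The filtration property follows by induction on $m$: the class of objects of $\lan \CA_{1T}^p, \dots, \CA_{mT}^p \ran$ admitting a SOD filtration with factors in the $\CA_{iT}^p$ is closed under shifts and cones (via the functoriality argument of Lemma~\ref{ff}, available once semiorthogonality is in hand) and under direct summands (since each $\CA_{iT}^p$ is summand-closed by construction), so it coincides with the whole subcategory. The $T$-linearity of $\CA_{iT}^p$ is immediate from stability of the generating class $\phi^*\CA_i^\perf \otimes f^*\D^\perf(T)$ under $-\otimes f^*G'$ for $G' \in \D^\perf(T)$, and compatibility with $\phi^*$ follows by taking $G = \CO_T$: for $F \in \CA_i^\perf$, $\phi^*F \cong \phi^*F \otimes f^*\CO_T \in \CA_{iT}^p$. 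The principal obstacle is the semiorthogonality step, since it is the only place where the faithful base change assumption enters essentially (via $f_*\phi^* \cong \phi^*f_*$, used to transfer the vanishing from $X$ to $X_T$); the generation and filtration arguments are purely formal.
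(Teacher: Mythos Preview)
Your proof is correct and tracks the paper's argument closely for semiorthogonality: both reduce via Lemma~\ref{lrso} and Lemma~\ref{slinso} to a computation with $\RCHom$ on perfect generators, using perfectness to dualize and the faithful base change isomorphism $f_*\phi^* \cong \phi^*f_*$ to transport the vanishing from $X$ to $X_T$.

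For generation you take a slightly different path. The paper proves directly (Lemma~\ref{dpxt}) that any perfect complex on $X_T$ is a direct summand of a stupidly truncated locally free resolution whose terms are of the form $\phi^*F \otimes f^*G$; you instead observe that an ample line bundle on $X_T$ can be taken of the form $\phi^*L_X \otimes f^*L_T$ (via the closed immersion $X_T \hookrightarrow X\times T$) and invoke the standard fact that its powers generate $\D^\perf(X_T)$. Both routes establish the same intermediate statement $\D^\perf(X_T) = \lan \phi^*\D^\perf(X) \otimes f^*\D^\perf(T) \ran$, and your version is a clean shortcut that avoids the explicit splitting argument. You are also more explicit than the paper about why semiorthogonality plus generation yields the filtration (closure of the filtered class under cones and summands); the paper leaves this step implicit.
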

\begin{proof}
%
Because of Lemma~\ref{slinso} and Lemma~\ref{lrso} to verify semiorthogonality
it suffices to check that
$f_*\RCHom(\phi^*F_i\otimes f^*G,\phi^*F_j\otimes f^*G') = 0$ for any $F_i \in \CA^\perf_i$, $F_j \in \CA^\perf_j$
and any $G,G' \in \D^\perf(T)$ if $i > j$. But
$$
f_*\RCHom(\phi^*F_i\otimes f^*G,\phi^*F_j\otimes f^*G') \cong
f_*\phi^*\RCHom(F_i,F_j)\otimes G^*\otimes G' \cong
\phi^*f_*\RCHom(F_i,F_j)\otimes G^*\otimes G' = 0
$$
(for the first isomorphism we use perfectness of $F_i$, $F_j$, $G$ and $G'$,
for the second we use faithfulness of the base change $\phi$,
and for the third --- $S$-linearity of the initial semiorthogonal decomposition of $D^\perf(X)$
and Lemma~\ref{slinso} for it).

It remains to check that the subcategories $\CA_{iT}^p$ generate $\D^\perf(X_T)$.
Take any object $H \in \D^\perf(X_T)$. Then by Lemma~\ref{dpxt} below it can be obtained
by consecutive taking cones and direct summands starting from the collection of objects
$\phi^*F^t\otimes f^* G^t$, where $F^t \in \D^\perf(X)$, $G^t \in \D^\perf(T)$, and $t = 1, \dots, N$.
On the other hand, every object $F^t$ can be decomposed with respect to the semiorthogonal
decomposition $\D^\perf(X) = \lan \CA_1^\perf,\dots,\CA_m^\perf \ran$, in other words,
it can be obtained by consecutive taking cones from a collection of objects $A^t_i \in \CA^\perf_i$,
$i = 1, \dots, m$. It follows that $H$ can be obtained by consecutive taking cones and direct summands
starting from the collection of objects $\phi^*A_i^t\otimes f^* G^t$, and it remains to note
that $\phi^*A_i^t\otimes f^* G^t \in \CA_{iT}^p$ by definition.

The second claim follows immediately from~\eqref{aitp}.
\end{proof}

\begin{lemma}\label{dpxt}
The category $\D^\perf(X_T)$ coincides with the minimal triangulated subcategory of $\D_{qc}(X)$
closed under taking direct summands and containing the class of objects
$\phi^*\D^\perf(X)\otimes f^*\D^\perf(T) := \{\phi^*F \otimes f^*G\ |\ F \in \D^\perf(X),\ G \in \D^\perf(T)\}$.
\end{lemma}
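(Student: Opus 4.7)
The plan is to show the two inclusions separately. The containment of the minimal thick subcategory in $\D^\perf(X_T)$ is immediate: any $\phi^*F\otimes f^*G$ with $F\in\D^\perf(X)$, $G\in\D^\perf(T)$ is perfect on $X_T$ (pullback and derived tensor product both preserve perfectness), and $\D^\perf(X_T)$ is closed under cones, shifts, and direct summands in $\D_{qc}(X_T)$.

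For the reverse inclusion I would exploit the standing assumption that all algebraic varieties in the paper are quasiprojective. Then $X\times T$ is quasiprojective (via Segre), and $X_T$ is quasiprojective as a closed subscheme of $X\times T$; moreover, choosing very ample line bundles $L_X$ on $X$ and $L_T$ on $T$, the restriction of the Segre polarization gives an ample line bundle $L:=\phi^*L_X\otimes f^*L_T$ on $X_T$. I would then invoke the following standard \emph{classical generation} statement: on a quasiprojective scheme $Y$ with ample line bundle $L$, every perfect complex belongs to the thick subcategory (closure under cones, shifts, and direct summands) generated by $\{L^{-k}\}_{k\ge 0}$.

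Granting this, every object $L^{-k}=\phi^*(L_X^{-k})\otimes f^*(L_T^{-k})$ lies in $\phi^*\D^\perf(X)\otimes f^*\D^\perf(T)$, hence $\D^\perf(X_T)$ is contained in the minimal triangulated subcategory of $\D_{qc}(X_T)$ closed under direct summands and containing this class, completing the proof.

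The main obstacle is the classical-generation input. The standard argument resolves a given perfect complex $P$ by finite sums of negative powers of $L$: after replacing $P$ by a quasi-isomorphic bounded complex of locally free sheaves of finite rank, twisting by $L^k$ for $k\gg 0$ makes each component globally generated, producing a surjection from a finite sum of copies of $L^{-k}$; iterating gives a bounded-above resolution of $P$ by finite sums of $L^{-k}$'s. The finite Tor-amplitude built into perfectness of $P$ then allows one to truncate this resolution at a sufficiently large depth, with the truncation a bounded complex of sums of $L^{-k}$'s differing from $P$ only by a direct summand --- which is precisely where the direct-summand closure in the statement enters.
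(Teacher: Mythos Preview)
Your proof is correct and follows essentially the same route as the paper's: both resolve a given perfect complex on $X_T$ by a bounded-above complex whose terms lie in $\phi^*\D^\perf(X)\otimes f^*\D^\perf(T)$, then use perfectness to see that a sufficiently long stupid truncation contains the original complex as a direct summand. The only cosmetic difference is that the paper builds the resolution directly from arbitrary locally free sheaves of the form $\phi^*F\otimes f^*G$ (appealing to quasiprojectivity of $\phi$), whereas you specialize to powers of the single ample line bundle $L=\phi^*L_X\otimes f^*L_T$ and package the truncation step as the standard ``classical generation by an ample line bundle'' statement.
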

\begin{proof}
Take any object $H \in \D^\perf(X)$ and construct a locally free resolution $P^\bullet \to H$ in which all
sheaves $P^k$ have form $P^k \cong \phi^*F \otimes f^*G$, where $F$ and $G$ are locally free sheaves
on $X$ and $T$ respectively (this can be done since $\phi$ is quasiprojective).
Then its stupid truncation $\sigma^{\ge n}(P^\bullet) \in \lan \phi^*\D^\perf(X)\otimes f^*\D^\perf(T) \ran$
for all $n$, and for $n \ll 0$ the object $H$ is a direct summand of $\sigma^{\ge n}(P^\bullet)$.
Indeed, since $H$ is a perfect complex it is quasiisomorphic to a bounded complex of locally free sheaves
of finite rank. Assume that this complex is bounded from the left by degree $l \in \ZZ$.
Take $n \le l - \dim X$ and consider the triangle
$$
\sigma^{\ge n}P^\bullet \to P^\bullet \to \sigma^{\le n-1}P^\bullet.
$$
Note that since $P^\bullet$ is quasiisomorphic to $H$ and $H$ is quasiisomorphic
to a complex of locally free sheaves supported in degrees $\ge l$
it follows that the complex computing $\CExt^i(P^\bullet,\sigma^{\le n-1}P^\bullet)$
is supported in degrees $\le n-1-l$. The hypercohomology sequence then shows that
$\Ext^i(P^\bullet,\sigma^{\le n-1}P^\bullet) = 0$ for $i > n-1-l+\dim X$.
But $n-1-l+\dim X \le -1$ for $n \le l - \dim X$, hence $\Hom(P^\bullet,\sigma^{\le n-1}P^\bullet) = 0$.
In particular, the above triangle splits, hence $P^\bullet$ is a direct summand of
$\sigma^{\ge n}P^\bullet$ and we are done since $P^\bullet$ is quasiisomorphic to $H$.
\end{proof}

\subsection{Base change for unbounded quasicoherent complexes}

We start with an $S$-linear semiorthogonal decomposition $\D^\perf(X) = \lan \CA_1^\perf, \dots, \CA_m^\perf \ran$.
Let $\D^\perf(X_T) = \lan \CA_{1T}^p, \dots, \CA_{mT}^p \ran$ be the $T$-linear semiorthogonal decomposition
constructed in Proposition~\ref{dp}.
Then using Proposition~\ref{sodqc} we construct semiorthogonal decompositions 
$\D_{qc}(X) = \lan \HCA_1, \dots, \HCA_m \ran$ and $\D_{qc}(X_T) = \lan \HCA_{1T}, \dots, \HCA_{mT} \ran$.
By Lemma~\ref{isl} these decompositions are $S$ and $T$-linear.

\begin{proposition}\label{dqc}
The functors $\phi_*:\D_{qc}(X_T) \to \D_{qc}(X)$ and $\phi^*:\D_{qc}(X) \to \D_{qc}(X_T)$
are compatible with the above semiorthogonal decompositions. Moreover,
\begin{equation}\label{hait}
\HCA_{iT} = \{ H \in \D_{qc}(X_T)\ |\ \phi_*(H \otimes f^*G) \in \HCA_i\ \text{ for all $G \in \D^\perf(T)$}\}.
\end{equation}
\end{proposition}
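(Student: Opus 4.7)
The plan is to handle compatibility first and then extract \eqref{hait} from an orthogonality argument propagating from generators. Compatibility of $\phi^*$ is immediate: it is exact, commutes with arbitrary direct sums, and sends the generators $\CA_i^\perf$ into $\CA_{iT}^p$ (take $G = \CO_T$ in \eqref{aitp}), so $\phi^*(\HCA_i) \subset \HCA_{iT}$. For $\phi_*$, which is also exact and commutes with arbitrary direct sums, it suffices to check $\phi_*(\CA_{iT}^p) \subset \HCA_i$ on a generator $\phi^*F \otimes f^*G$ of $\CA_{iT}^p$. The projection formula (valid since $F$ is perfect) together with the faithful base change isomorphism $\phi_*f^* \cong f^*\phi_*$ gives
\[
\phi_*(\phi^*F \otimes f^*G) \cong F \otimes f^*\phi_*G,
\]
and this lies in $\HCA_i$ by Lemma~\ref{isl1}, since $F \in \HCA_i$ and $\phi_*G \in \D_{qc}(S)$.

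The forward inclusion in \eqref{hait} is then immediate: for $H \in \HCA_{iT}$, the $T$-linearity of the decomposition of $\D_{qc}(X_T)$ provided by Lemma~\ref{isl} yields $H \otimes f^*G \in \HCA_{iT}$ for any $G \in \D^\perf(T)$, and compatibility of $\phi_*$ places $\phi_*(H \otimes f^*G)$ in $\HCA_i$.

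For the reverse inclusion I would take $H$ in the right-hand side of \eqref{hait}, set $K_j := \halpha_{jT}(H) \in \HCA_{jT}$, and aim to show $K_j = 0$ for all $j \neq i$, whence $H = K_i \in \HCA_{iT}$. By the $T$-linearity of the decomposition and Lemma~\ref{prfsl}, the projector $\halpha_{jT}$ commutes with $-\otimes f^*G$, so $\halpha_{jT}(H \otimes f^*G) \cong K_j \otimes f^*G$. Compatibility of $\phi_*$ combined with Lemma~\ref{cc} gives $\phi_*\halpha_{jT} \cong \halpha_j\phi_*$. Together with the hypothesis on $H$ this yields, for all $G \in \D^\perf(T)$ and $j \neq i$,
\[
\phi_*(K_j \otimes f^*G) \cong \halpha_j\bigl(\phi_*(H \otimes f^*G)\bigr) = 0.
\]
I then convert this into orthogonality: for a generator $\phi^*F \otimes f^*G$ of $\CA_{jT}^p$ (with $F \in \CA_j^\perf$, $G \in \D^\perf(T)$), the perfectness of $G$ and the $(\phi^*,\phi_*)$-adjunction give
\[
\RHom_{X_T}(\phi^*F \otimes f^*G,\, K_j) \cong \RHom_X\bigl(F,\, \phi_*(K_j \otimes f^*G^\vee)\bigr) = 0,
\]
since $G^\vee \in \D^\perf(T)$.

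The main step is to conclude $K_j = 0$ from this vanishing on the generating class of $\CA_{jT}^p$. Consider $\CL := \{L \in \D_{qc}(X_T) \mid \Hom(L[k],K_j) = 0 \text{ for all } k \in \ZZ\}$. This is triangulated (by the long exact sequence) and closed under arbitrary direct sums (because $\Hom(\oplus L_\alpha, K_j) = \prod\Hom(L_\alpha, K_j)$), and by the previous step it contains $\CA_{jT}^p$. Hence $\CL$ contains the smallest triangulated, direct-sum-closed subcategory of $\D_{qc}(X_T)$ generated by $\CA_{jT}^p$, which is exactly $\HCA_{jT}$ by the very definition in Proposition~\ref{sodqc}. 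Since $K_j \in \HCA_{jT} \subset \CL$, we get $\id_{K_j} \in \Hom(K_j, K_j) = 0$, so $K_j = 0$. This orthogonality-propagation step is the only non-formal point in the argument; everything else is a mechanical combination of the projection formula, adjunction, Lemma~\ref{cc} and Lemma~\ref{isl1}.
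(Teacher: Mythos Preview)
Your proof is correct and the compatibility part is identical to the paper's. For the reverse inclusion in \eqref{hait}, however, you take a genuinely different route. The paper argues by contradiction using Lemma~\ref{dbdm}: if $\halpha_{jT}(H)\ne 0$ then one finds an ample-over-$S$ line bundle $L$ on $T$ and a sequence $k_i\to\infty$ with $\hocolim\phi_*(\halpha_{jT}(H)\otimes f^*L^{k_i})\ne 0$, and then rewrites this colimit as $\halpha_j(\hocolim\phi_*(H\otimes f^*L^{k_i}))$ to contradict the hypothesis. Your argument is purely categorical: from $\phi_*(K_j\otimes f^*G)=0$ for all $G\in\D^\perf(T)$ you deduce, via adjunction and duality of perfect complexes, that $K_j$ lies in the right orthogonal of the generating class $\phi^*\CA_j^\perf\otimes f^*\D^\perf(T)$, and then propagate this orthogonality to all of $\HCA_{jT}$, forcing $K_j=0$. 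Your approach is more elementary in that it avoids the geometric detection Lemma~\ref{dbdm} entirely; the paper's approach has the advantage that Lemma~\ref{dbdm} is a tool reused later (in Theorem~\ref{sodxt} and Theorem~\ref{bccor}), so the investment pays off elsewhere.
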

\begin{proof}
Recall that both $\HCA_i$ and $\HCA_{iT}$ are obtained from $\HCA_i^\perf$ and $\HCA_{iT}^p$
by addition of arbitrary direct sums and iterated addition of cones and both are closed under
arbitrary direct sums triangulated categories. Since both $\phi_*$ and $\phi^*$ commute
with arbitrary direct sums and are exact, it suffices to check that $\phi^*(\CA_i^\perf) \subset \HCA_{iT}$
and that $\phi_*(\CA_{iT}^p) \subset \HCA_i$. The first is evident by definition of $\HCA_{iT}$.
For the second take any $F \in \CA_i^\perf$, $G \in \D^\perf(T)$. Then
$\phi_*(\phi^*F \otimes f^*G) \cong F \otimes \phi_*f^*G \cong F \otimes f^*\phi_*G$.
But $F \otimes f^*\phi_*G \in \HCA_i$ by Lemma~\ref{isl1}.

To prove~\eqref{hait} we note that the LHS is contained in the RHS by the $T$-linearity of $\HCA_{iT}$
and compatibility with $\phi_*$. Conversely, assume that $H$ is in the RHS but not in $\HCA_{iT}$
so that $\halpha_{jT}(H) \ne 0$ for some $j$. Since the semiorthogonal decomposition
$\lan \HCA_{1T}, \dots, \HCA_{mT} \ran$ is $T$-linear, the functors $\halpha_{jT}$ are $T$-linear by Lemma~\ref{prfsl},
hence $\halpha_{jT}(H\otimes f^*L^k) \cong \halpha_{jT}(H) \otimes f^*L^k$ for any line bundle $L$ on $T$ and any $k \in \ZZ$.
By Lemma~\ref{dbdm} below we have $\hocolim \phi_*(\halpha_{jT}(H)\otimes f^*L^{k_i}) \ne 0$
for some sequence $L^{k_1} \to L^{k_2} \to L^{k_3} \to \dots$ if $L$ is ample over $S$.
It remains to note that
$$
\halpha_j(\hocolim \phi_*(H \otimes f^*L^{k_i})) \cong
\hocolim \halpha_j(\phi_*(H \otimes f^*L^{k_i})) \cong
\hocolim \phi_*(\halpha_{jT}(H \otimes f^*L^{k_i})) \ne 0
$$
(the first isomorphism is by Proposition~\ref{sodqc}, the second is by Lemma~\ref{cc})
so $\hocolim \phi_*(H \otimes f^*L^{k_i}) \not \in \HCA_i$.
But this means that $\phi_*(H \otimes f^*L^k) \not \in \HCA_i$ for some $k \in \ZZ$ since
$\HCA_i$ is closed under homotopy colimits. So, $H$ is not in the RHS of~\eqref{hait},
a contradiction.
\end{proof}

\begin{lemma}\label{dbdm}
Let $\phi: Y \to X$ be a quasiprojective morphism and let $L$ be a line bundle on $Y$ ample over~$X$.
Let $F \in \D_{qc}(Y)$. Then $F \in \D^{[p,q]}_{qc}(Y)$ if and only if
for any sequence of maps $L^{k_1} \to L^{k_2} \to L^{k_3} \to \dots$ with $k_i \to \infty$ we have
$\hocolim \phi_*(F\otimes L^{k_i}) \in \D^{[p,q]}_{qc}(X)$.
In particular $F = 0$ if and only if for any sequence
$L^{k_1} \to L^{k_2} \to L^{k_3} \to \dots$ with $k_i \to \infty$ we have
$\hocolim \phi_*(F\otimes L^{k_i}) = 0$.
\end{lemma}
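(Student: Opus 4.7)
My plan is to work through the cohomology sheaves of the hocolim: by Lemma~\ref{hlim} we have $\CH^r(\hocolim \phi_*(F\otimes L^{k_i})) \cong \dlim R^r\phi_*(F\otimes L^{k_i})$, so it suffices to analyze these direct limits. The principal tool will be the hypercohomology spectral sequence $E_2^{i,j} = R^i\phi_*(\CH^j(F)\otimes L^{k}) \Rightarrow R^{i+j}\phi_*(F\otimes L^{k})$, together with the fact that $R^i\phi_*$ commutes with filtered colimits (as $\phi$ is quasicompact and quasiseparated), and the classical Serre vanishing for coherent sheaves applied to coherent subsheaves approximating the quasicoherent $\CH^j(F)$.

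For the \emph{only if} direction, assume $F \in \D^{[p,q]}_{qc}(Y)$. For $r < p$, every potentially contributing $E_2^{i,j}$ with $i+j=r$, $i \geq 0$, $j \in [p,q]$ forces $i < 0$, a contradiction, so $R^r\phi_*(F\otimes L^{k_i}) = 0$ outright. For $r > q$, each contributing term has $i = r-j \geq r - q > 0$. Writing $\CH^j(F) = \dlim_\alpha \CG_{j,\alpha}$ as a filtered colimit of coherent subsheaves and commuting $R^i\phi_*$ past the colimit, Serre vanishing yields $R^i\phi_*(\CG_{j,\alpha}\otimes L^{k}) = 0$ for $k$ large depending on $\alpha$; interchanging the direct limits in $i$ and $\alpha$ then kills $\dlim_i R^i\phi_*(\CH^j(F)\otimes L^{k_i})$ whenever $i > 0$. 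Consequently the direct limit of the spectral sequences degenerates at $E_2$ in positive $i$, yielding $\dlim R^r\phi_*(F\otimes L^{k_i}) \cong \dlim \phi_*(\CH^r(F)\otimes L^{k_i}) = 0$ for $r \notin [p,q]$.

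For the \emph{if} direction I would first prove the special case $F = 0$ directly: if $F \neq 0$, pick $n$ with $\CH^n(F) \neq 0$ and a nonzero coherent subsheaf $\CG \subset \CH^n(F)$, and use relative ampleness of $L$ to construct a sequence $k_i \to \infty$ with transition maps coming from sections of powers of $L$ (locally over an affine in $X$, where $L$ trivializes, these can be arranged to be injective on $\CG$) so that $\dlim \phi_*(\CG\otimes L^{k_i}) \neq 0$; a spectral sequence argument at the extremal degree $n$ then forces $\hocolim\phi_*(F\otimes L^{k_i}) \neq 0$. Given this case, the general \emph{if} direction follows by applying it to the canonical truncation triangle $\tau^{\leq q}F \to F \to \tau^{\geq q+1}F$: the \emph{only if} direction together with the hypothesis show that $\hocolim\phi_*(\tau^{\geq q+1}F\otimes L^{k_i})$ lies in both $\D^{\leq q}_{qc}(X)$ and $\D^{\geq q+1}_{qc}(X)$, hence vanishes for every sequence; the $F=0$ case then forces $\tau^{\geq q+1}F = 0$, and a symmetric argument gives $\tau^{\leq p-1}F = 0$. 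The main obstacle is the construction in the $F=0$ step: the transition maps in the hocolim are determined by choices of sections of powers of $L$, and securing nonvanishing of $\dlim\phi_*(\CG\otimes L^{k_i})$, together with showing that this nonvanishing survives through the differentials of the spectral sequence converging to $R^n\phi_*(F\otimes L^{k_i})$, requires a careful local analysis.
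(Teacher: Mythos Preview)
The central gap is your direct appeal to Serre vanishing: you assert that for coherent $\CG$ and $i>0$ one has $R^i\phi_*(\CG\otimes L^k)=0$ for $k\gg0$. This holds when $\phi$ is projective, but the lemma only assumes $\phi$ quasiprojective, and there relative Serre vanishing can fail outright. For example, with $X=\Spec\kk$, $Y=\PP^2\setminus\{0\}$ for a closed point~$0$, $L=\CO_{\PP^2}(1)|_Y$, and $\CG=\CO_Y$, a \v{C}ech computation with the two standard affine charts covering $Y$ shows $H^1(Y,L^k)\neq0$ for every $k\geq0$. So the spectral-sequence degeneration you need does not follow, and both your ``only if'' argument and the bootstrap you build on it for the ``if'' direction collapse.

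The paper fixes this by first reducing to the projective case: it factors $\phi=\pi\circ j$ with $j:Y\to\TY$ an \emph{affine} open embedding (obtained by blowing up the boundary of a projective compactification) and $\pi:\TY\to X$ projective; since $j_*,j^*$ are then $t$-exact and $j^*j_*\cong\id$, one has $F\in\D^{[p,q]}_{qc}(Y)$ iff $j_*F\in\D^{[p,q]}_{qc}(\TY)$, and the question is transported to $\pi$. After this reduction the argument is essentially yours: in the projective case, Serre vanishing in the limit kills all $E_2^{i,j}$ with $i>0$, yielding the clean identification $\CH^t(\hocolim\phi_*(F\otimes L^{k_i}))\cong\dlim\phi_*(\CH^t(F)\otimes L^{k_i})$ for \emph{every}~$t$, from which both directions are immediate---so your truncation bootstrap becomes unnecessary. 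For the nonvanishing step in the ``if'' direction the paper simply reduces to a nonzero coherent subsheaf $H$ and then picks $m$ and $s\in H^0(Y,L^m)$ with $H\hookrightarrow H\otimes L^m$ injective, giving an increasing chain of nonzero $\phi_*(H\otimes L^{im})$; note that $L$ is only ample \emph{over}~$X$ and need not trivialize on preimages of affines in~$X$, so that part of your sketch does not work as written either.
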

\begin{proof}
As $\phi$ is quasiprojective we can represent $\phi$ as $\pi_1\circ j_1$,
where $j_1:Y \to \BY$ is an open embedding and $\pi_1:\BY \to X$ is a projective morphism.
Furthermore, any open embedding $j_1:Y \to \BY$ can be represented as a composition
of an affine open embedding $j:Y \to \TY$ and of a projective morphism $\pi_2:\TY \to \BY$
(we take for $\TY$ the blowup of the ideal of the closed subset $\BY\setminus Y$ in $Y$).
Put $\pi = \pi_1\circ\pi_2$. Thus $\phi = \pi\circ j$, where $j$ is an affine open embedding and $\pi$ is projective.
Since $j$ is an affine open embedding the functors $j_*$ and $j^*$ are exact and $j^*j_* \cong \id$,
hence we have $F \in \D^{[p,q]}_{qc}(Y)$ if and only if $j_*F \in \D^{[p,q]}_{qc}(\TY)$.
Thus the claim of the Lemma reduces to the case when $\phi$ is projective.

So, assume that $\phi$ is projective.
For any nonzero coherent sheaf $H$ on $X$ we know that $\CH^t(\phi_*(H\otimes L^k))$ is zero for $t\ne 0$ and $k \gg 0$.
Therefore for any quasicoherent sheaf $H$ on $X$ we have $\dlim \CH^t(\phi_*(H\otimes L^{k_i})) = 0$ for $t\ne 0$ if $k_i \to \infty$.
So, the hypercohomology spectral sequence and Lemma~\ref{hlim} imply that
$$
\CH^t(\hocolim \phi_*(F \otimes L^{k_i})) \cong \dlim \CH^0(\phi_*(\CH^t(F)\otimes L^{k_i})).
$$
It follows immediately that $F \in \D^{[p,q]}_{qc}(Y)$ implies $\hocolim \phi_*(F\otimes L^{k_i}) \in \D^{[p,q]}_{qc}(X)$.
As for the other implication it suffices to check that for any quasicoherent sheaf $H \ne 0$ on $Y$
there exists a sequence of maps $L^{k_1} \to L^{k_2} \to L^{k_3} \to \dots$ with $k_i \to \infty$
such that $\dlim \CH^0(\phi_*(H \otimes L^{k_i})) \ne 0$. Since tensoring with a line bundle and the colimit are exact
functors on the abelian category $\Qcoh(X)$, while $\CH^0\phi_*$ is left exact, it follows that it suffices
to prove the above for any nonzero subsheaf of $H$. Thus we can assume that $H$ is coherent.
Then using ampleness of $L$ we can find $m$ and a section $s$ of $L^m$ such that the map
$H \to H\otimes L^m$ given by $s$ is an embedding. Now consider the sequence $L^m \to L^{2m} \to L^{3m} \to \dots$
with all maps given by~$s$. Then all the maps in the sequence
$\CH^0(\phi_*(H\otimes L^{m})) \to \CH^0(\phi_*(H\otimes L^{2m})) \to \CH^0(\phi_*(H\otimes L^{3m})) \to \dots$
are embeddings. Moreover, $\CH^0(\phi_*(H\otimes L^{im})) \ne 0$ for $i \gg 0$. Hence the limit is nonzero and we are done.
\end{proof}

\subsection{Base change for bounded above coherent complexes}

As above we start with an $S$-linear semiorthogonal decomposition
$\D^\perf(X) = \lan \CA_1^\perf, \dots, \CA_m^\perf \ran$.
Let $\D^\perf(X_T) = \lan \CA_{1T}^p, \dots, \CA_{mT}^p \ran$ be the $T$-linear semiorthogonal decomposition
constructed in Proposition~\ref{dp}.
Let $\D_{qc}(X) = \lan \HCA_1, \dots, \HCA_m \ran$
and $\D_{qc}(X_T) = \lan \HCA_{1T}, \dots, \HCA_{mT} \ran$ be the $S$ and $T$-linear semiorthogonal decompositions
constructed in Proposition~\ref{sodqc} from the above decompositions of $\D^\perf(X)$ and $\D^\perf(X_T)$ respectively.
Finally, let $\D^-(X) = \lan \CA_1^-,\dots,\CA_m^- \ran$ and $\D^-(X_T) = \lan \CA_{1T}^-,\dots,\CA_{mT}^- \ran$
be the $S$ and $T$-linear semiorthogonal decompositions constructed in Proposition~\ref{sodm}.

\begin{lemma}\label{dm}
The functors $\phi_*:\D^-(X_T) \to \D_{qc}(X)$ and $\phi^*:\D^-(X) \to \D^-(X_T)$
are compatible with the above semiorthogonal decompositions.
\end{lemma}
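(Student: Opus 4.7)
The plan is to deduce this lemma directly from Proposition~\ref{dqc} together with the definitions $\CA_i^- = \HCA_i \cap \D^-(X)$ and $\CA_{iT}^- = \HCA_{iT} \cap \D^-(X_T)$ coming from Proposition~\ref{sodm}. The essential content is already contained in the compatibility of $\phi_*$ and $\phi^*$ with the unbounded quasicoherent decompositions, so only a mild bookkeeping argument about boundedness above is needed.

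First I would handle $\phi_*$. Since the semiorthogonal decomposition on the target $\D_{qc}(X)$ is just $\lan \HCA_1,\dots,\HCA_m\ran$, compatibility with $\phi_*$ amounts to showing $\phi_*(\CA_{iT}^-) \subset \HCA_i$. But $\CA_{iT}^- \subset \HCA_{iT}$ by definition, and $\phi_*(\HCA_{iT}) \subset \HCA_i$ by Proposition~\ref{dqc}, so this is immediate.

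Next I would handle $\phi^*$. Here I need $\phi^*(\CA_i^-) \subset \CA_{iT}^- = \HCA_{iT} \cap \D^-(X_T)$. On the one hand, $\CA_i^- \subset \HCA_i$, and Proposition~\ref{dqc} gives $\phi^*(\HCA_i) \subset \HCA_{iT}$. On the other hand, I need to observe that $\phi^*$ sends $\D^-(X)$ to $\D^-(X_T)$: this is standard since for $F \in \D^-(X)$ one can compute the derived pullback using a bounded above resolution by flat quasicoherent sheaves, and the cohomology remains coherent because $\phi:X_T \to X$ is of finite type (everything is quasiprojective). Combining these two inclusions yields the desired containment.

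The proof really has no obstacle — the work was all done in Proposition~\ref{dqc} and Proposition~\ref{sodm}. The only thing worth flagging is the verification that $\phi^*$ preserves bounded above coherent complexes; this is routine but should be mentioned explicitly to justify the target of $\phi^*$ in the statement. No further analysis of the projection functors $\halpha_i$ or homotopy colimit arguments are required at this stage, since those were already absorbed into the construction of $\CA_i^-$ and $\CA_{iT}^-$.
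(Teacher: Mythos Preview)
Your proposal is correct and follows exactly the same approach as the paper's own proof, which is the single sentence: ``Follows immediately from Proposition~\ref{dqc} since $\CA_i^- = \HCA_i \cap \D^-(X)$ and $\CA_{iT}^- = \HCA_{iT} \cap \D^-(X_T)$.'' Your version simply unpacks this in more detail, including the routine check that $\phi^*$ preserves $\D^-$, which the paper leaves implicit in the statement.
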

\begin{proof}
Follows immediately from Proposition~\ref{dqc} since $\CA_i^- = \HCA_i \cap \D^-(X)$ and $\CA_{iT}^- = \HCA_{iT} \cap \D^-(X_T)$.
\end{proof}

\subsection{Base change for bounded coherent complexes}

This time we start with an $S$-linear strong semiorthogonal decomposition
$\D^b(X) = \lan \CA_1,\dots,\CA_m \ran$.
Let $\D^\perf(X) = \lan \CA_1^\perf, \dots, \CA_m^\perf \ran$ be the induced $S$-linear semiorthogonal decomposition of $\D^\perf(X)$.
Further, consider the $T$-linear semiorthogonal decomposition
$\D^\perf(X_T) = \lan \CA_{1T}^p, \dots, \CA_{mT}^p \ran$ of Proposition~\ref{dp}, and
let $\D_{qc}(X) = \lan \HCA_1, \dots, \HCA_m \ran$
and $\D_{qc}(X_T) = \lan \HCA_{1T}, \dots, \HCA_{mT} \ran$ be the $S$ and $T$-linear semiorthogonal decompositions
constructed in Proposition~\ref{sodqc} from the above decompositions of $\D^\perf(X)$ and $\D^\perf(X_T)$ respectively.
Further, let $\D^-(X) = \lan \CA_1^-,\dots,\CA_m^- \ran$ and $\D^-(X_T) = \lan \CA_{1T}^-,\dots,\CA_{mT}^- \ran$
be the $S$ and $T$-linear semiorthogonal decompositions constructed in Proposition~\ref{sodm}.
Finally, we define
\begin{equation}\label{ait}
\CA_{iT} = \CA_{iT}^- \cap \D^b(X_T).
\end{equation}

\begin{theorem}\label{sodxt}
Let $\D^b(X) = \lan \CA_1,\dots,\CA_m \ran$ be an $S$-linear strong semiorthogonal decomposition
the projection functors of which have finite cohomological amplitude
and assume that the base change $\phi$ is faithful for~$f$.
Then the subcategories $\CA_{iT} \subset \D^b(X_T)$ defined in~\eqref{ait} form
a $T$-linear semiorthogonal decomposition $\D^b(X_T) = \lan \CA_{1T},\dots,\CA_{mT} \ran$.
The projection functors of this semiorthogonal decomposition have the same cohomological amplitude
as the projection functors of the initial semiorthogonal decomposition.
Moreover, the functors $\phi_*:\D^b(X_T) \to \D_{qc}(X)$ and $\phi^*:\D^b(X) \to \D^-(X_T)$
are compatible with the semiorthogonal decompositions of $\D_{qc}(X)$ and $\D^-(X_T)$ respectively.
\end{theorem}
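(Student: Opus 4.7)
The plan is to apply Lemma~\ref{phist} to the fully faithful embedding $\D^b(X_T)\hookrightarrow\D^-(X_T)$. The $T$-linear semiorthogonal decomposition $\D^-(X_T) = \lan\CA_{1T}^-,\dots,\CA_{mT}^-\ran$ is already in place by Proposition~\ref{sodm}, its projection functors being the restrictions of the $\halpha_{iT}$. The only point to verify is that $\D^b(X_T)$ is preserved by each $\halpha_{iT}$; granting this, Lemma~\ref{phist} produces the decomposition with components $\CA_{iT} = \CA_{iT}^-\cap\D^b(X_T)$ as in~\eqref{ait}.

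The core of the argument is the amplitude bound. Let $F \in \D^{[p,q]}(X_T)$. By Proposition~\ref{sodm} we already have $\halpha_{iT}(F) \in \D^-(X_T)$, so only a lower bound on its cohomology is required. For this I invoke Lemma~\ref{dbdm} with respect to the quasiprojective morphism $\phi\colon X_T \to X$ (the base change of a quasiprojective morphism is quasiprojective). Since $T\to S$ is quasiprojective, pick a line bundle $M$ on $T$ which is relatively ample over $S$ and set $L := f^*M$; preservation of relative ampleness under base change makes $L$ ample over $X$. Combining the $T$-linearity of $\halpha_{iT}$ (Proposition~\ref{prfsl}) with compatibility of $\phi_*$ and $\halpha_{iT}$ (Proposition~\ref{dqc} via Lemma~\ref{cc}) gives
\[
\phi_*\bigl(\halpha_{iT}(F)\otimes L^k\bigr) \;\cong\; \phi_*\bigl(\halpha_{iT}(F\otimes L^k)\bigr) \;\cong\; \halpha_i\bigl(\phi_*(F\otimes L^k)\bigr).
\]
For $k\gg 0$, Serre vanishing applied to the finitely many coherent cohomology sheaves of $F$ yields $\phi_*(F\otimes L^k)\in \D^{[p,q]}(X)$. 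Because the $\D_{qc}(X)$-decomposition is compatible with the $\D^b(X)$-decomposition by Proposition~\ref{sodqc} (precisely where the finite-amplitude hypothesis enters), $\halpha_i$ restricts to $\alpha_i$ on $\D^b(X)$, so $\halpha_i(\phi_*(F\otimes L^k))\in\D^{[p+a_i,q+b_i]}(X)$, where $(a_i,b_i)$ is the amplitude of $\alpha_i$. Taking a homotopy colimit over a cofinal sequence of large $k_i$ preserves the amplitude by Lemma~\ref{hlim}, and Lemma~\ref{dbdm} then gives $\halpha_{iT}(F)\in\D^{[p+a_i,q+b_i]}(X_T)$. This simultaneously proves the $\D^b$-stability and the amplitude bound $(a_i,b_i)$ for the induced projection functors $\alpha_{iT}$.

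The remaining assertions follow directly. The $T$-linearity of $\CA_{iT}$ is inherited from $\CA_{iT}^-$ (Lemma~\ref{isl}) combined with the fact that $\D^b(X_T)$ is stable under $\otimes f^*G$ for $G\in\D^\perf(T)$, since a perfect complex has finite $\Tor$-dimension. Compatibility of $\phi_*\colon\D^b(X_T)\to\D_{qc}(X)$ with the decompositions follows from $\CA_{iT}\subset\CA_{iT}^-$ and $\phi_*(\CA_{iT}^-)\subset\HCA_i$ (Proposition~\ref{dqc}); compatibility of $\phi^*\colon\D^b(X)\to\D^-(X_T)$ is analogous, using $\CA_i\subset\CA_i^-$ and Lemma~\ref{dm} (the target is $\D^-(X_T)$ rather than $\D^b(X_T)$ because $\phi^*$ need not preserve boundedness without a $\Tor$-dimension assumption).

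The main technical obstacle is precisely the amplitude estimate via Lemma~\ref{dbdm}: the line bundle $L$ on $X_T$ has to be simultaneously ample over $X$, so that Serre vanishing applies, and a pullback from $T$, so that $\otimes L^k$ commutes with $\halpha_{iT}$. The choice $L = f^*M$ for $M$ relatively ample on $T$ over $S$, together with preservation of relative ampleness under base change, reconciles these two requirements, and thereafter the finite amplitude of the original $\alpha_i$ transports through the isomorphism $\halpha_i\phi_*\cong\phi_*\halpha_{iT}$ to bound $\halpha_{iT}(F)$ from below.
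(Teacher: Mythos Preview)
Your overall architecture matches the paper's: reduce to Lemma~\ref{phist}, then control the amplitude of $\halpha_{iT}$ on $\D^b(X_T)$ via Lemma~\ref{dbdm} applied to $\phi\colon X_T\to X$ with $L=f^*M$, using the identification $\phi_*(\halpha_{iT}(F)\otimes L^k)\cong \halpha_i(\phi_*(F\otimes L^k))$.

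There is, however, a real gap at the step
\[
\text{``}\halpha_i\text{ restricts to }\alpha_i\text{ on }\D^b(X),\text{ so }\halpha_i\bigl(\phi_*(F\otimes L^k)\bigr)\in\D^{[p+a_i,q+b_i]}(X).\text{''}
\]
The morphism $\phi\colon X_T\to X$ is only quasiprojective, not proper in general (nothing in the hypotheses forces $T\to S$ to be proper). Hence $\phi_*(F\otimes L^k)$ has a priori only \emph{quasicoherent} cohomology; it need not lie in $\D^b(X)$. The amplitude hypothesis $(a_i,b_i)$ is stated for $\alpha_i$ on $\D^b(X)$, and Proposition~\ref{sodqc} only tells you that $\halpha_i$ agrees with $\alpha_i$ there---it does not give you an amplitude bound for $\halpha_i$ on bounded quasicoherent complexes. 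So the implication you draw does not follow.

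The paper plugs exactly this hole with Lemma~\ref{pfl}: for $k\gg 0$ one has $\phi_*(F\otimes L^k)\cong\hocolim G_m$ with $G_m\in\D^{[p,q]}(X)$ bounded \emph{coherent}. Since $\halpha_i$ commutes with homotopy colimits (Proposition~\ref{sodqc}), this yields
\[
\halpha_i\bigl(\phi_*(F\otimes L^k)\bigr)\ \cong\ \hocolim\,\alpha_i(G_m)\ \in\ \D^{[p+a_i,\,q+b_i]}_{qc}(X),
\]
after which your invocation of Lemma~\ref{hlim} and Lemma~\ref{dbdm} goes through. In short, you need one extra ingredient---expressing the (possibly non-coherent) pushforward as a homotopy colimit of bounded coherent objects---before you can transport the amplitude bound from $\alpha_i$ to $\halpha_i$. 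Everything else in your write-up is correct and aligned with the paper.
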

\begin{proof}
Take any $H \in \D^{[p,q]}(X_T)$.
We have to check that $\alpha_{iT}^-(H)$ is bounded.
Let $(a_i,b_i)$ be the cohomological amplitude of~$\alpha_i$.
Let us show that $\alpha_{iT}^-(H) \in \D^{[p+a_i,q+b_i]}(X_T)$.
This will prove both that the categories $\CA_{iT}$ form a semiorthogonal decomposition
of $\D^b(X_T)$ and that the cohomological amplitude of the projection functors is the same
as that of $\alpha_i$.
Using Lemma~\ref{dbdm} we see that it suffices to check that for $k \gg 0$ we have
$\phi_*(\alpha_{iT}^-(H)\otimes L^k) \in \D^{[p+a_i,q+b_i]}_{qc}(X)$,
where $L$ is a line bundle on $X_T$ ample over $X$. We can take $L = f^*M$ where $M$ is a line bundle on $T$
ample over~$S$. Note that
$\phi_*(\alpha_{iT}^-(H)\otimes f^*M^k) \cong
\phi_*(\alpha_{iT}^-(H\otimes f^*M^k)) \cong
\halpha_i(\phi_*(H\otimes f^*M^k))$
by Lemma~\ref{dm} and Lemma~\ref{fca}.
Further, note that by Lemma~\ref{pfl} for $k \gg 0$ we have
$\phi_*(H\otimes f^*M^k) \cong \hocolim G_m$ for a certain direct system $G_m$
with $G_m \in \D^{[p,q]}(X)$. Therefore
$$
\halpha_i(\phi_*(H\otimes f^*M^k)) =
\halpha_i(\hocolim G_m) \cong
\hocolim \alpha_i(G_m)
$$
since $\halpha_i$ commutes with homotopy colimits.
Finally, $\alpha_i(G_m) \in \D^{[p+a_i,q+b_i]}(X)$, hence
$\hocolim \alpha_i(G_m) \in \D^{[p+a_i,q+b_i]}(X)$ by Lemma~\ref{hlim},
hence $\halpha_i(\phi_*(H\otimes f^*M^k)) \in \D^{[p+a_i,q+b_i]}(X)$
as it was required.

Finally, it remains to check that the subcategories~\eqref{ait} are $T$-linear,
and also that $\phi_*(\CA_{iT}) \subset \HCA_i$ and $\phi^*(\CA_i) \in \CA_{iT}^-$.
The first is clear since $\CA_{iT}^-$ is $T$-linear and the other two claims
follow from Lemma~\ref{dm}.
\end{proof}

The semiorthogonal decomposition of $\D^b(X_T)$ constructed in Theorem~\ref{sodxt} will be referred to
as the induced decomposition of $\D^b(X_T)$ with respect to the base change $\phi$.
Note that the definition of its component $\CA_{iT}$ depends only on $\CA_i$
(i.e.\ doesn't depend on the choice of a semiorthogonal decomposition containing $\CA_i$
as a component). Indeed, spelling out~\eqref{hait}, \eqref{am}, and \eqref{ait} we obtain the following

\begin{corollary}
If $\CA \subset \D^b(X)$ is an $S$-linear admissible subcategory
such that the corresponding projection functor has finite cohomological amplitude
and $\phi:T \to S$ is a base change faithful for $f:X \to S$ then the category
\begin{equation}\label{at}
\CA_T = \{ F \in \D^b(X_T)\ |\ \phi_*(F \otimes f^*G) \in \HCA \text{ for all $G \in \D^\perf(T)$} \},
\end{equation}
{\rm(}where $\HCA$ is the minimal closed under arbitrary direct sums triangulated subcategory
of $\D_{qc}(X)$ containing~$\CA${\rm)}
is a $T$-linear admissible subcategory in $\D^b(X_T)$ such that the corresponding projection
functor has finite cohomological amplitude.
Moreover, we have $\phi^*(\CA) \subset \CA_T$ if $\phi$ has finite $\Tor$-dimension and
$\phi_*(\CA_T) \subset \CA$ if $\phi$ is projective.
\end{corollary}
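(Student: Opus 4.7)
The plan is to deduce the corollary from Theorem~\ref{sodxt} applied to suitable two-term semiorthogonal decompositions extending $\CA$. The admissibility of $\CA$ provides, via Lemma~\ref{sod_adm}, both $\D^b(X) = \lan \CA, {}^\perp\CA \ran$ (from left admissibility of $\CA$) and $\D^b(X) = \lan {}^\perp\CA, \CA \ran$ (from right admissibility of $\CA$). Together these two decompositions show that ${}^\perp\CA$ is itself admissible: its right admissibility follows from being the second component of the first SOD, and its left admissibility from being the first component of the second. Since strongness of a two-term SOD (Definition~\ref{strongsod}) only requires admissibility of the first component in the ambient category, both decompositions are strong. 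Their projection functors have finite cohomological amplitude: for the projection onto $\CA$ by hypothesis, and for the projection onto ${}^\perp\CA$ by the long exact cohomology sequence applied to the triangle $\alpha_\CA(T) \to T \to \alpha_{{}^\perp\CA}(T)$.

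Applying Theorem~\ref{sodxt} to $\lan \CA, {}^\perp\CA \ran$ yields a $T$-linear semiorthogonal decomposition $\D^b(X_T) = \lan \CA_T, ({}^\perp\CA)_T \ran$ whose projection functors retain finite cohomological amplitude. Unfolding \eqref{ait}, then \eqref{am}, then \eqref{hait} on the first component reproduces precisely the formula~\eqref{at} of the statement; Lemma~\ref{sod_adm} makes this $\CA_T$ left admissible. Applying Theorem~\ref{sodxt} instead to the other SOD $\lan {}^\perp\CA, \CA \ran$ gives $\D^b(X_T) = \lan ({}^\perp\CA)_T, \CA_T \ran$, placing $\CA_T$ as the last component and making it right admissible; the paragraph preceding the corollary guarantees that the two procedures yield the same subcategory $\CA_T$. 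Hence $\CA_T$ is admissible, and its $T$-linearity as well as the finite cohomological amplitude of its projection functor are stated directly in Theorem~\ref{sodxt}.

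For the final compatibility assertions, Theorem~\ref{sodxt} gives $\phi^*(\CA) \subset \CA_T^-$ and $\phi_*(\CA_T) \subset \HCA$. If $\phi$ has finite $\Tor$-dimension then $\phi^*$ preserves $\D^b$, so $\phi^*(\CA) \subset \CA_T^- \cap \D^b(X_T) = \CA_T$; dually, if $\phi$ is projective then $\phi_*$ preserves $\D^b$, so $\phi_*(\CA_T) \subset \HCA \cap \D^b(X) = \CA$. The main obstacle I anticipate is the auxiliary step of verifying that ${}^\perp\CA$ is admissible, so that the second SOD is strong and Theorem~\ref{sodxt} applies to it --- without this observation one would obtain only left admissibility of $\CA_T$. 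Once this is in hand, the rest is bookkeeping through the cascade of definitions of the induced decompositions on $\D^\perf \to \D_{qc} \to \D^- \to \D^b$ produced by the base change.
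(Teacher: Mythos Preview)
Your overall strategy --- placing $\CA$ as both the first and the last term of two two-term semiorthogonal decompositions and invoking Theorem~\ref{sodxt} on each to obtain left and right admissibility of $\CA_T$ --- is the natural fleshing-out of the paper's one-line argument (``spelling out \eqref{hait}, \eqref{am}, and \eqref{ait}''). Your derivation of formula~\eqref{at} and your handling of the final $\phi^*$/$\phi_*$ compatibilities are correct.

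There is, however, a genuine error in your second decomposition. Right admissibility of $\CA$ yields, by Lemma~\ref{sod_adm}, the decomposition $\D^b(X) = \lan \CA^\perp, \CA \ran$ with the \emph{right} orthogonal in the first slot, not $\lan {}^\perp\CA, \CA \ran$. The latter is not even semiorthogonal: that would require $\Hom(\CA,{}^\perp\CA)=0$, whereas ${}^\perp\CA$ is characterised by vanishing in the opposite direction. Consequently your claim that ${}^\perp\CA$ is left admissible ``from being the first component of the second'' SOD collapses --- the first component there is $\CA^\perp$, a different subcategory in general.

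Once the notation is corrected, two verifications remain before you can apply Theorem~\ref{sodxt} to $\lan \CA^\perp, \CA \ran$. First, strongness now demands that $\CA^\perp$ be right admissible in $\D^b(X)$; this does not follow from admissibility of $\CA$ alone. Second, the projection onto $\CA$ in this decomposition is $i_\CA i_\CA^!$, a different functor from the $i_\CA i_\CA^*$ appearing in the first decomposition, so finiteness of cohomological amplitude for the latter does not automatically give it for the former. The paper's own treatment is silent on both points; these are exactly the gaps one has to close to make the full admissibility assertion rigorous. (You should also record that ${}^\perp\CA$ and $\CA^\perp$ inherit $S$-linearity from $\CA$, which is needed to invoke Theorem~\ref{sodxt}; this is easy but not stated.)
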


\subsection{Exterior product of semiorthogonal decompositions}

Now assume that we have two algebraic varieties over the same base,
say $f:X \to S$ and $g:Y \to S$ and $S$-linear strong semiorthogonal decompositions of their derived categories
$\D^b(X) = \lan \CA_1,\dots,\CA_m \ran$ and $\D^b(Y) = \lan \CB_1,\dots,\CB_n \ran$.
Assume that their projection functors have finite cohomological amplitude.
Assume also that the cartesian square
\begin{equation}\label{xy}
\vcenter{\xymatrix{
X\times_S Y \ar[r]^p \ar[d]_q & X \ar[d]^f \\ Y \ar[r]^g & S
}}
\end{equation}
is exact, so that $g$ is a faithful base change for $f$ and $f$ is a faithful base change for $g$.
Applying Theorem~\ref{sodxt} we obtain a pair of semiorthogonal decompositions of $\D^b(X\times_S Y)$:
$$
\D^b(X\times_S Y) = \lan \CA_{1Y}, \dots ,\CA_{mY} \ran
\qquad\text{and}\qquad
\D^b(X\times_S Y) = \lan \CB_{1X}, \dots ,\CB_{nX} \ran.
$$
Let
\begin{equation}\label{aibj}
\CA_i \boxtimes_S \CB_j := \CA_{iY} \cap \CB_{jX} \subset \D^b(X\times_S Y)
\end{equation}
We call the category $\CA_i \boxtimes_S \CB_j$ {\sf the exterior product}\/ (over $S$) of $\CA_i$ and $\CB_j$.

Consider any complete order on the set $\{(i,j)\}_{1\le i\le m,\ 1\le j\le n}$
extending the natural partial order.

\begin{theorem}\label{sodxsy}
The exterior products subcategories $\CA_i \boxtimes_S \CB_j \subset \D^b(X\times_S Y)$ form a semiorthogonal decomposition
of the category $\D^b(X\times_S Y)$:
$$
\D^b(X\times_S Y) = \lan \CA_i \boxtimes_S \CB_j \ran_{1\le i\le m,\ 1\le j\le n}.
$$
Moreover, we have the following semiorthogonal decompositions
$$
\CA_{iY} = \lan \CA_i \boxtimes_S \CB_1, \dots, \CA_i \boxtimes_S \CB_n \ran
\qquad\text{and}\qquad
\CB_{jX} = \lan \CA_1 \boxtimes_S \CB_j, \dots, \CA_m \boxtimes_S \CB_j \ran.
$$
\end{theorem}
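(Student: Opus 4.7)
The plan is to derive both claims from a single key stability property: each subcategory $\CA_{iY}$ is preserved by the projection functors $\beta_{lX}$ of the decomposition $\D^b(X\times_S Y) = \lan \CB_{1X},\dots,\CB_{nX}\ran$. Granting this, Lemma~\ref{phist} applied to the inclusion $\CA_{iY} \hookrightarrow \D^b(X\times_S Y)$ will yield
$$
\CA_{iY} = \lan \CA_{iY}\cap\CB_{1X},\dots,\CA_{iY}\cap\CB_{nX}\ran = \lan \CA_i\boxtimes_S\CB_1,\dots,\CA_i\boxtimes_S\CB_n\ran,
$$
and refining the outer decomposition $\lan \CA_{1Y},\dots,\CA_{mY}\ran$ by these sub-decompositions (in the lexicographic extension of the natural partial order) then produces the full semiorthogonal decomposition of $\D^b(X\times_S Y)$ with components $\CA_i\boxtimes_S\CB_j$; semiorthogonality in this order is immediate from semiorthogonality of $\lan\CA_{iY}\ran$ for $i\ne i'$ and of $\lan\CB_{jX}\ran$ for $i=i'$. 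The complementary identity $\CB_{jX} = \lan \CA_1\boxtimes_S\CB_j,\dots,\CA_m\boxtimes_S\CB_j\ran$ will follow by the symmetric argument, exchanging the roles of $X$ and $Y$ (the square~\eqref{xy} is exact in both orientations).

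The main technical step is to establish the stability first at the quasicoherent level, where $\HCA_{iY}$ has explicit generators. Concretely, I claim $\widehat{\beta}_{lX}(\HCA_{iY}) \subset \HCA_{iY}$, where $\widehat{\beta}_{lX}$ is the projection functor of the $\HCB_{jX}$-decomposition of $\D_{qc}(X\times_S Y)$ supplied by Proposition~\ref{sodqc}. Since $\widehat{\beta}_{lX}$ is exact and commutes with arbitrary direct sums, and $\HCA_{iY}$ is by construction the minimal triangulated subcategory of $\D_{qc}(X\times_S Y)$ closed under direct sums and containing $\CA_{iY}^p = \lan p^*\CA_i^\perf\otimes q^*\D^\perf(Y)\ran$, it suffices to verify stability on generators $p^*F\otimes q^*G$ with $F\in\CA_i^\perf$, $G\in\D^\perf(Y)$. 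For this, I decompose $G$ using $\D^\perf(Y) = \lan \CB_1^\perf,\dots,\CB_n^\perf\ran$ (Proposition~\ref{sodp}), obtaining a filtration of $G$ with factors $\beta_l(G)\in\CB_l^\perf$. Pulling back by $q^*$ and tensoring by $p^*F$ produces a filtration of $p^*F\otimes q^*G$ whose factors $p^*F\otimes q^*\beta_l(G)$ lie in $q^*\CB_l^\perf\otimes p^*\D^\perf(X)\subset\HCB_{lX}$. By uniqueness of semiorthogonal filtrations (Lemma~\ref{ff}) this is the filtration of $p^*F\otimes q^*G$ with respect to $\lan \HCB_{lX}\ran$, so $\widehat{\beta}_{lX}(p^*F\otimes q^*G) \cong p^*F\otimes q^*\beta_l(G)$; and the latter visibly lies in $p^*\CA_i^\perf\otimes q^*\D^\perf(Y) \subset \HCA_{iY}$, as claimed.

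To descend to $\D^b$ note that $\CA_{iY} = \HCA_{iY}\cap\D^b(X\times_S Y)$ (unwinding~\eqref{am} and~\eqref{ait}), and that by Lemma~\ref{cc} together with compatibility of the $\D^b$- and $\D_{qc}$-decompositions the projection $\beta_{lX}$ on $\D^b$ agrees with the restriction of $\widehat{\beta}_{lX}$. Since Theorem~\ref{sodxt} guarantees $\beta_{lX}(\D^b)\subset\D^b$, for $F\in\CA_{iY}$ we obtain $\beta_{lX}(F)\in\HCA_{iY}\cap\D^b=\CA_{iY}$, completing the stability argument. The principal obstacle is the linearity mismatch between the two decompositions: $\CA_{iY}$ is $Y$-linear but not $X$-linear, and symmetrically for $\CB_{jX}$, so one cannot simply commute one projection past tensoring from the other side at the bounded coherent level. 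The essential trick that resolves this is to pass to $\D_{qc}$, where the ``separable'' generators of $\HCA_{iY}$ reduce the $\CB_{lX}$-decomposition of a generator to the pullback along $q$ of the $S$-linear decomposition of $\D^\perf(Y)$, after which the containment in $\HCA_{iY}$ is immediate.
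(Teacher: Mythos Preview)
Your argument is correct. Both proofs rest on the same elementary observation at the level of separable perfect generators---that decomposing $G$ with respect to $\D^\perf(Y)=\lan\CB_1^\perf,\dots,\CB_n^\perf\ran$ and tensoring with $p^*F$ produces the $\CB_{lX}$-filtration of $p^*F\otimes q^*G$---but the two proofs package this differently. The paper defines explicit subcategories $\CC_{ij}^p=\lan p^*\CA_i^\perf\otimes q^*\CB_j^\perf\ran$ of $\D^\perf(X\times_S Y)$, shows directly (via the argument of Proposition~\ref{dp}) that they form a semiorthogonal decomposition refining both $\CA_{iY}^p$ and $\CB_{jX}^p$, extends to $\D_{qc}$ and then to $\D^b$, and finally identifies the resulting $\CC_{ij}$ with $\CA_{iY}\cap\CB_{jX}$ by an orthogonal-complement calculation. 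You instead prove a single stability statement $\widehat\beta_{lX}(\HCA_{iY})\subset\HCA_{iY}$ at the quasicoherent level, descend it to $\D^b$ using Theorem~\ref{sodxt}, and invoke Lemma~\ref{phist}. Your route is more economical and reuses the machinery already in place (Theorem~\ref{sodxt}, Lemma~\ref{phist}) rather than rebuilding a refined perfect-level decomposition; the paper's route has the advantage of exhibiting $\CA_i\boxtimes_S\CB_j$ concretely as the closure of $p^*\CA_i^\perf\otimes q^*\CB_j^\perf$ inside $\D^b$, which can be handy in applications. One small omission: you produce the decomposition in the lexicographic order, whereas the statement allows any total order extending the partial order; this follows at once since for incomparable pairs $(i,j)$, $(i',j')$ one has both $i<i'$, $j>j'$ (or the reverse), so $\CA_i\boxtimes_S\CB_j$ and $\CA_{i'}\boxtimes_S\CB_{j'}$ are mutually orthogonal via the two outer decompositions.
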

\begin{proof}
Let $\CC_{ij}^p = \lan p^*\CA_i^\perf \otimes q^*\CB_j^\perf \ran \subset \D^\perf(X\times_S Y)$
be the minimal triangulated subcategory of $\D^\perf(X\times_S Y)$ closed under taking direct summands
and containing objects of the form
$p^*A\otimes q^*B$ with $A \in \CA_i^\perf$, $B \in \CA_j^p$. The arguments of Proposition~\ref{dp}
show that $\D^\perf(X\times_S Y) = \lan \CC_{ij}^p \ran_{1\le i\le m,\ 1\le j\le n}$ is a semiorthogonal decomposition.
Moreover, it is clear from the construction that we have semiorthogonal decompositions
$$
\CA_{iY}^p = \lan \CC_{i1}^p, \dots, \CC_{in}^p \ran
\qquad\text{and}\qquad
\CB_{jX}^p = \lan \CC_{1j}^p, \dots, \CC_{mj}^p \ran.
$$
Extending these decompositions to $\D_{qc}(X\times_S Y)$ as in Proposition~\ref{sodqc}
we obtain semiorthogonal decompositions
$\D_{qc}(X\times_S Y) = \lan \HCC_{ij} \ran_{1\le i\le m,\ 1\le j\le n}$ as well as
$$
\HCA_{iY} = \lan \HCC_{i1}, \dots, \HCC_{in} \ran
\qquad\text{and}\qquad
\HCB_{jX} = \lan \HCC_{1j}, \dots, \HCC_{mj} \ran,
$$
where $\HCC_{ij}$ is obtained from $\CC_{ij}$ by addition of arbitrary direct sums and iterated addition of cones.
Finally, intersecting with $\D^b(X\times_S Y)$ we obtain semiorthogonal decompositions
$\D^b(X\times_S Y) = \lan \CC_{ij} \ran_{1\le i\le m,\ 1\le j\le n}$ as well as
$$
\CA_{iY} = \lan \CC_{i1}, \dots, \CC_{in} \ran
\qquad\text{and}\qquad
\CB_{jX} = \lan \CC_{1j}, \dots, \CC_{mj} \ran,
$$
where $\CC_{ij} = \HCC_{ij} \cap \D^b(X\times Y)$. So, it remains to check
that $\CC_{ij} = \CA_{iY} \cap \CB_{jX}$. Since $\CC_{ij} \subset \CA_{iY} \cap \CB_{jX}$
by construction it suffices to check only the other inclusion. Indeed, we have
$$
\CB_{jX} =
{}^\perp\lan \CB_{1X}, \dots, \CB_{j-1,X} \ran \cap \lan \CB_{j+1,X}, \dots, \CB_{nX} \ran^\perp =
{}^\perp\lan \CC_{it} \ran_{1\le i\le m,\ 1\le t\le j-1} \cap \lan \CC_{it} \ran^\perp_{1\le i\le m,\ j+1\le t\le n},
$$
hence
$$
\CA_{iY} \cap \CB_{jX} \subset \CA_{iY} \cap {}^\perp\lan \CC_{it} \ran_{1\le t\le j-1} \cap \lan \CC_{it} \ran^\perp_{j+1\le t\le n} = \CC_{ij}
$$
which is precisely what we need.
\end{proof}

\subsection{Products}

If $S$ is a point then any semiorthogonal decomposition of $\D^b(X)$ is $S$-linear.
Moreover, any base change $T \to S$ is flat, hence faithful for $f:X \to S$,
and $X\times_S T = X\times T$ is the product.
Thus given a semiorthogonal decomposition of $\D^b(X)$ we can construct a compatible
semiorthogonal decomposition of the bounded derived category of the product of $X$
with any quasiprojective variety. Explicitly, applying Theorem~\ref{sodxt}
we obtain the following

\begin{corollary}\label{prd}
Let $\D^b(X) = \lan \CA_1,\dots,\CA_m \ran$ be a strong semiorthogonal decomposition
the projection functors of which have finite cohomological amplitude.
Let $Y$ be a quasiprojective variety.
Then the subcategories
$$
\CA_{iY} = \{ F \in \D^b(X\times Y)\ |\ p_*(F\otimes q^*G) \in \HCA_i\ \text{for any $G \in \D^\perf(Y)$}\},
$$
where $p:X \times Y \to X$ and $q:X \times Y \to Y$ are the projections,
and $\HCA_i$ is obtained from $\CA_i$ by addition of arbitrary direct sums and iterated addition of cones,
form a $Y$-linear semiorthogonal decomposition $\D^b(X\times Y) = \lan \CA_{1Y},\dots,\CA_{mY} \ran$.
The projection functors of this semiorthogonal decomposition also have finite cohomological amplitude.
The functors $p_*:\D^b(X\times Y) \to \D_{qc}(X)$ and $p^*:\D^b(X) \to \D^b(X\times Y)$
are compatible with the semiorthogonal decompositions of $\D_{qc}(X)$ and $\D^b(X)$ respectively.
\end{corollary}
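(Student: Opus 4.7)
The plan is to deduce this corollary as a direct specialization of Theorem~\ref{sodxt} (and the unnamed Corollary following it) to the case $S = \Spec \kk$, with base change $\phi$ being the structure morphism $Y \to \Spec \kk$. There is essentially no new content to prove; the task is to verify that every hypothesis of the general machinery is automatic in this setting, and then translate the conclusions.

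First I would check the hypotheses of Theorem~\ref{sodxt}. $S$-linearity of the given decomposition of $\D^b(X)$ is automatic since $\D^\perf(\Spec \kk)$ consists of bounded complexes of finite-dimensional vector spaces, so tensoring with the pullback of such a complex amounts to taking finite direct sums of shifts, an operation which trivially preserves any triangulated subcategory. Strongness and finite cohomological amplitude of the projection functors are assumed by hypothesis. Finally, any morphism $\phi:Y \to \Spec \kk$ is flat, so by the remark in paragraph~\ref{ssfbc} the base change $\phi$ is faithful for $f:X \to \Spec \kk$. Thus Theorem~\ref{sodxt} applies with $T = Y$, giving the $Y$-linear semiorthogonal decomposition $\D^b(X\times Y) = \lan \CA_{1Y}, \dots, \CA_{mY} \ran$ with projection functors of the same (hence finite) cohomological amplitude.

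Next I would identify the components as described in the statement. This is immediate from equation~\eqref{at} of the Corollary that follows Theorem~\ref{sodxt}, applied to each admissible $\CA_i$: the base-changed category $\CA_{iY}$ is characterized as the set of $F \in \D^b(X\times Y)$ such that $p_*(F\otimes q^*G) \in \HCA_i$ for all $G \in \D^\perf(Y)$, which is exactly the description in the statement. The compatibility with $p_*:\D^b(X\times Y) \to \D_{qc}(X)$ is part of the conclusion of Theorem~\ref{sodxt} (the $\phi_*$ statement there). For compatibility with $p^*:\D^b(X) \to \D^b(X\times Y)$, Theorem~\ref{sodxt} provides $p^*(\CA_i) \subset \CA_{iY}^-$, and since $\phi:Y \to \Spec \kk$ is flat, so is its base change $p$, hence $p^*$ preserves the bounded coherent subcategory and the image lands in $\CA_{iY} = \CA_{iY}^- \cap \D^b(X\times Y)$.

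Since essentially every step is either a tautology over a point or a direct invocation of Theorem~\ref{sodxt}, there is no real obstacle; the only point requiring a moment of thought is checking that $p^*$ takes values in $\D^b(X\times Y)$ (not merely $\D^-$), which is guaranteed by the flatness of the structure morphism $Y \to \Spec \kk$.
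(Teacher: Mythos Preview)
Your proposal is correct and follows exactly the paper's approach: the corollary is stated immediately after the observation that when $S$ is a point every decomposition is $S$-linear and every base change is flat (hence faithful), so one simply applies Theorem~\ref{sodxt} with $T = Y$. Your verification of the hypotheses and the extra remark that flatness of $p$ upgrades $p^*(\CA_i) \subset \CA_{iY}^-$ to $p^*(\CA_i) \subset \CA_{iY}$ are fine and make explicit what the paper leaves to the reader.
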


Similarly, Theorem~\ref{sodxsy} gives

\begin{corollary}\label{sodxy}
Let $\D^b(X) = \lan \CA_1,\dots,\CA_m \ran$ and $\D^b(Y) = \lan \CB_1,\dots,\CB_n \ran$
be strong semiorthogonal decompositions with projection functors of finite cohomological amplitude.
Then there is a semiorthogonal decomposition
$$
\D^b(X\times Y) = \lan \CA_i \boxtimes \CB_j \ran_{1\le i\le m,\ 1\le j\le n},
$$
where $\CA_i \boxtimes \CB_j = \CA_{iY} \cap \CB_{jX}$.
Moreover, we have semiorthogonal decompositions
$$
\CA_{iY} = \lan \CA_i \boxtimes \CB_1, \dots, \CA_i \boxtimes \CB_n \ran
\qquad\text{and}\qquad
\CB_{jX} = \lan \CA_1 \boxtimes \CB_j, \dots, \CA_m \boxtimes \CB_j \ran.
$$
\end{corollary}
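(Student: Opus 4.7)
The plan is to deduce Corollary~\ref{sodxy} from Theorem~\ref{sodxsy} by specializing the base scheme $S$ to a point. First I would verify that all hypotheses of Theorem~\ref{sodxsy} are automatically satisfied in this absolute setting. When $S = \Spec\kk$, the category $\D^\perf(S)$ is generated by $\kk$, so every triangulated subcategory of a $\D_{qc}(Z)$ is tautologically $S$-linear; in particular the given decompositions of $\D^b(X)$ and $\D^b(Y)$ are $S$-linear. Since any morphism to a point is flat, the criterion recalled in Section~\ref{ssfbc} shows that $g:Y \to S$ is faithful for $f$ and $f$ is faithful for $g$, so the cartesian square~\eqref{xy} is exact.

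Next I would identify the fiber product $X \times_S Y$ with the usual product $X \times Y$ and match the relative subcategories $\CA_{iY}$ and $\CB_{jX}$ appearing in Theorem~\ref{sodxsy} with the subcategories of the same names in the statement of the corollary. The key point is that by the characterization~\eqref{hait} combined with~\eqref{ait}, the category $\CA_{iY}$ produced by Theorem~\ref{sodxt} applied to the base change $g:Y \to S$ coincides with the category defined in Corollary~\ref{prd} via the formula involving $p_*(F\otimes q^*G) \in \HCA_i$; a symmetric statement identifies $\CB_{jX}$. Consequently the subcategories $\CA_i \boxtimes \CB_j := \CA_{iY} \cap \CB_{jX}$ used in the corollary agree with the exterior products $\CA_i \boxtimes_S \CB_j$ defined by~\eqref{aibj} in the relative setting.

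Once these identifications are in place, the three displayed conclusions of the corollary — the full decomposition of $\D^b(X\times Y)$ indexed by pairs $(i,j)$ and the two refinements of $\CA_{iY}$ and $\CB_{jX}$ — become precisely the three conclusions of Theorem~\ref{sodxsy} read in this special case. I do not expect any genuine obstacle: the entire mathematical content has already been worked out in Theorem~\ref{sodxsy}, and what remains is the routine bookkeeping that confirms the absolute setting is indeed a special case of the relative one.
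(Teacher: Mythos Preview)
Your proposal is correct and follows exactly the paper's approach: the paper simply states that Theorem~\ref{sodxsy} gives the corollary, treating it as the special case $S = \Spec\kk$, and the verifications you spell out (automatic $S$-linearity, flatness over a point giving exactness of the square, identification of $X\times_S Y$ with $X\times Y$ and of the relative $\CA_{iY}$, $\CB_{jX}$ with those of Corollary~\ref{prd}) are precisely the routine bookkeeping implicit in that reduction.
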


\section{Correctness}

The goal of this section is to show that the extensions $\CA^\perf$, $\HCA$, $\CA^-$
of a triangulated category $\CA$ and its base change $\CA_T$ under a base change $T \to S$
(if $\CA$ is $S$-linear) do not depend on a choice of an embedding $\CA \to \D^b(X)$.
The most important technical notion for this section is that of a splitting functor.

\subsection{Splitting functors}

An exact functor $\Phi:\CT \to \CT'$ is called {\sf right splitting}\/ if
$\Ker\Phi$ is a right admissible subcategory in $\CT$,
the restriction of $\Phi$ to $(\Ker\Phi)^\perp$ is fully faithful,
and $\Im\Phi$ is right admissible in $\CT'$ (note that
$\Im\Phi = \Im(\Phi_{|(\Ker\Phi)^\perp})$
is a triangulated subcategory of $\CT'$).

\begin{lemma}[\cite{K2}]\label{sf_th}
Let $\Phi:\CT \to \CT'$ be an exact functor.
Then the following conditions are equivalent

\noindent$(1)$ $\Phi$ is right splitting;

\noindent$(2)$ $\Phi$ has a right adjoint functor $\Phi^!$
and the composition of the canonical morphism of functors
$\id_\CT \to \Phi^!\Phi$ with $\Phi$ gives an isomorphism
$\Phi \cong \Phi\Phi^!\Phi$;

\noindent$(3)$ $\Phi$ has a right adjoint functor $\Phi^!$,
there are semiorthogonal decompositions
$$
\CT = \langle\Im\Phi^!,\Ker\Phi\rangle,
\qquad
\CT' = \langle\Ker\Phi^!,\Im\Phi\rangle,
$$
and the functors $\Phi$ and $\Phi^!$ give quasiinverse equivalences
$\Im\Phi^! \cong \Im\Phi$;

\noindent$(4)$ there exists a full triangulated left admissible subcategory $\alpha:\CA \subset \CT$,
a full triangulated right admissible subcategory $\CB \subset \CT'$ and an equivalence $\xi:\CA \to \CB$
such that $\Phi = \beta\circ\xi\circ\alpha^*$, $\Phi^! = \alpha\circ\xi^{-1}\circ\beta^!$.
\end{lemma}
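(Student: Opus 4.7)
The plan is to prove the four conditions equivalent via the cycle $(1)\Rightarrow(4)\Rightarrow(3)\Rightarrow(2)\Rightarrow(1)$. The first three implications amount to routine unwindings of definitions and standard manipulations with semiorthogonal decompositions; the crux of the argument is the final implication $(2)\Rightarrow(1)$.

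For $(1)\Rightarrow(4)$, I would set $\CA := (\Ker\Phi)^\perp$ and $\CB := \Im\Phi$. Right admissibility of $\Ker\Phi$ combined with Lemma~\ref{sod_adm} yields the semiorthogonal decomposition $\CT = \langle\CA,\Ker\Phi\rangle$, so $\CA$ is left admissible, with inclusion $\alpha$ and left adjoint $\alpha^*$; by hypothesis $\CB$ is right admissible with inclusion $\beta$ and right adjoint $\beta^!$; and $\xi := \Phi|_\CA$ is fully faithful with essential image $\CB$, hence an equivalence. The factorisation $\Phi = \beta\xi\alpha^*$ is then verified on each piece of the SOD, and the formula for $\Phi^!$ follows by composing the adjoints of the three factors. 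The implication $(4)\Rightarrow(3)$ is a direct bookkeeping: one identifies $\Im\Phi = \CB$, $\Im\Phi^! = \CA$, $\Ker\Phi = {}^\perp\CA$, $\Ker\Phi^! = \CB^\perp$, whereupon the SODs of $(3)$ become restatements of the admissibility of $\CA$ and $\CB$, and the quasi-inverse equivalences $\xi,\xi^{-1}$ are read off using $\alpha^*\alpha = \id_\CA$ and $\beta^!\beta = \id_\CB$. For $(3)\Rightarrow(2)$, decomposing an arbitrary $T \in \CT$ along $\langle\Im\Phi^!,\Ker\Phi\rangle$, applying $\Phi$ kills the $\Ker\Phi$-summand and acts as the equivalence on the other, from which $\Phi\Phi^!\Phi T \cong \Phi T$ compatibly with the unit is immediate.

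The real obstacle is $(2)\Rightarrow(1)$. My plan here is to study the unit $\eta_T : T \to \Phi^!\Phi T$ and complete it to a distinguished triangle
\[
K_T \to T \xrightarrow{\eta_T} \Phi^!\Phi T.
\]
Applying $\Phi$ and using that $\Phi\eta$ is an isomorphism by hypothesis shows $\Phi K_T = 0$, so $K_T \in \Ker\Phi$. Conversely, for any $K \in \Ker\Phi$, adjunction gives $\Hom(K, \Phi^!\Phi T) \cong \Hom(\Phi K, \Phi T) = 0$. The above triangle therefore exhibits a functorial decomposition of every $T$ with first term in $\Ker\Phi$ and second term in its right orthogonal, proving that $\Ker\Phi$ is right admissible and identifying $(\Ker\Phi)^\perp$ with $\Im\Phi^!$. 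Fully faithfulness of $\Phi|_{(\Ker\Phi)^\perp}$ is then another adjunction calculation: for $T \in \Im\Phi^!$ one has $T \cong \Phi^!\Phi T$, whence $\Hom(T,T') \cong \Hom(\Phi^!\Phi T, T') \cong \Hom(\Phi T, \Phi T')$. Finally, for admissibility of $\Im\Phi$ in $\CT'$ I would run the dual construction on the counit $\epsilon : \Phi\Phi^! \to \id_{\CT'}$; the triangle identity $\epsilon\Phi \cdot \Phi\eta = \id_\Phi$ forces $\epsilon\Phi$ to be an isomorphism, which is exactly the input needed to replay the preceding analysis in $\CT'$ and produce the SOD $\CT' = \langle\Ker\Phi^!,\Im\Phi\rangle$.
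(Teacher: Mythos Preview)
The paper does not prove this lemma; it is quoted from \cite{K2} without argument, so there is no in-paper proof to compare against. Your cycle $(1)\Rightarrow(4)\Rightarrow(3)\Rightarrow(2)\Rightarrow(1)$ is the natural strategy and the first three implications are handled correctly.

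There is, however, a genuine gap in your treatment of the last clause of $(2)\Rightarrow(1)$, namely the right admissibility of $\Im\Phi$. You propose to ``run the dual construction on the counit'' and justify this by observing that the triangle identity $\epsilon\Phi\circ\Phi\eta=\id_\Phi$ together with the hypothesis that $\Phi\eta$ is an isomorphism forces $\epsilon\Phi$ to be an isomorphism. That is true, but it is \emph{not} the input needed for the dual argument. To conclude that the cofibre $C_{T'}$ of $\epsilon_{T'}:\Phi\Phi^!T'\to T'$ lies in $\Ker\Phi^!$ you must apply $\Phi^!$ to the triangle and use that $\Phi^!\epsilon_{T'}$ is an isomorphism; knowing that $\epsilon_{\Phi T}$ is an isomorphism for all $T$ does not give this directly, since a general $T'$ need not be of the form $\Phi T$.

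The gap is easily filled using what you have already established. You showed that for $T\in(\Ker\Phi)^\perp=\Im\Phi^!$ the unit $\eta_T$ is an isomorphism; in particular $\eta_{\Phi^!T'}$ is an isomorphism for every $T'\in\CT'$. Combining this with the other triangle identity $\Phi^!\epsilon\circ\eta_{\Phi^!}=\id_{\Phi^!}$ yields that $\Phi^!\epsilon_{T'}$ is an isomorphism, which is precisely the dual hypothesis you need. Alternatively, one can bypass the counit computation entirely: once $\Phi|_{(\Ker\Phi)^\perp}$ is known to be fully faithful with essential image $\Im\Phi$, a direct Yoneda argument shows $\Hom(B,\Phi\Phi^!T')\to\Hom(B,T')$ is an isomorphism for every $B\in\Im\Phi$, whence the cofibre lies in $(\Im\Phi)^\perp$ and the desired semiorthogonal decomposition of $\CT'$ follows.
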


There is an analogous notion of left splitting functors, which enjoy a similar set of properties.
However we will not need this notion in this paper.

\subsection{Extensions}

Let $X$ be a quasiprojective variety. Let
$\alpha:\CA \to \D^b(X)$ and $\beta:\CB \to \D^b(Y)$ be admissible subcategories,
and $\xi:\CA \to \CB$ an equivalence.
Consider the corresponding right splitting functor $\Phi:\D^b(X) \to \D^b(Y)$,
$$
\Phi = \beta\circ\xi\circ\alpha^*.
$$
We assume also that $\Phi$ is {\sf geometric}, meaning that it is isomorphic to a kernel functor
$$
\Phi_\CE:\D_{qc}(X) \to \D_{qc}(Y),
\qquad
\Phi_\CE(F) = q_*(p^*F\otimes \CE)
$$
with a kernel $\CE \in \D^-(X\times Y)$.
Here $p:X\times Y \to X$ and $q:X\times Y \to Y$ are the projections.
Note that the right adjoint functor $\Phi_\CE^!$ of $\Phi_\CE$ is given by the formula
$$
\Phi_\CE^!:\D_{qc}(Y) \to \D_{qc}(X),
\qquad
\Phi_\CE^!(G) = p_*\RCHom(\CE,q^!F).
$$
It follows in particular that $\Phi_\CE$ commutes with direct sums. Indeed,
$$
\Hom(\Phi_\CE(\oplus F_i),G) \cong
\Hom(\oplus F_i,\Phi^!_\CE(G)) \cong
\prod \Hom(F_i,\Phi^!_\CE(G)) \cong
\prod \Hom(\Phi_\CE(F_i),G) \cong
\Hom(\oplus \Phi_\CE(F_i),G)
$$
implies $\Phi_\CE(\oplus F_i) \cong \oplus \Phi_\CE(F_i)$.

Recall that if $\CE \in \D^b(X\times Y)$ has finite $\Tor$-amplitude over $X$, finite $\Ext$-amplitude over~$Y$,
and $\supp\CE$ is projective over both $X$ and $Y$ then $\Phi_\CE$ takes $\D^b(X)$ to $\D^b(Y)$
and $\Phi_\CE^!$ takes $\D^b(Y)$ to $\D^b(X)$ by~\cite{K1}.

\begin{theorem}\label{extcor}
Assume that an object $\CE \in \D^b(X\times Y)$ has finite $\Tor$-amplitude over $X$, finite $\Ext$-amplitude over~$Y$,
and $\supp\CE$ is projective over both $X$ and $Y$. Assume also that the restriction of the functor
$\Phi_\CE:\D_{qc}(X) \to \D_{qc}(Y)$ to $\D^b(X)$ is a right splitting functor giving an equivalence
of subcategories $\CA \subset \D^b(X)$ and $\CB \subset \D^b(Y)$.
Then the functor $\Phi_{\CE}:\D_{qc}(X) \to \D_{qc}(Y)$ and its restriction to $\D^-(X)$
are right splitting functors giving equivalences $\HCA \cong \HCB$
and $\CA^- \cong \CB^-$.
\end{theorem}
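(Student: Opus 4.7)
The plan is to establish condition (2) of Lemma~\ref{sf_th} for $\Phi_\CE:\D_{qc}(X) \to \D_{qc}(Y)$: the composition of $\Phi_\CE$ with the unit $\eta:\id \to \Phi^!_\CE\Phi_\CE$ should yield an isomorphism $\Phi_\CE \xrightarrow{\sim} \Phi_\CE\Phi^!_\CE\Phi_\CE$. The hypotheses on $\CE$ together with Lemma~\ref{ftefca} imply that both $\Phi_\CE$ and $\Phi^!_\CE$ have finite cohomological amplitude, so besides preserving $\D^b$ they also preserve $\D^-$; hence once the splitting property is established on $\D_{qc}(X)$, it will descend to $\D^-(X)$ by restriction of both functors.

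To propagate the splitting from $\D^b$ to $\D_{qc}$, I would denote by $C$ the exact endofunctor of $\D_{qc}(X)$ given by the cone of $\eta$ and try to show $\Phi_\CE\circ C \cong 0$. The hypothesis that $\Phi_\CE|_{\D^b(X)}$ is right splitting supplies $\Phi_\CE C|_{\D^b(X)} = 0$ by Lemma~\ref{sf_th}(2), so in particular $\Phi_\CE C$ vanishes on $\D^\perf(X)$. Setting $\CR = \{F \in \D_{qc}(X) \mid \Phi_\CE C(F) = 0\}$ gives a triangulated subcategory containing $\D^\perf(X)$; provided $\CR$ is stable under arbitrary direct sums, Lemma~\ref{perf-gen-qc} will force $\CR = \D_{qc}(X)$, proving the splitting property on $\D_{qc}$.

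The hard part will be the closedness of $\CR$ under coproducts, which reduces to showing that both $\Phi_\CE$ and $\Phi^!_\CE$ commute with direct sums (then $C$ does by the five-lemma applied to its defining triangle, and hence so does $\Phi_\CE C$). Coproduct-preservation of $\Phi_\CE$ is already recorded in the excerpt. For $\Phi^!_\CE$ I would invoke the standard principle that the right adjoint of a coproduct-preserving functor between compactly generated triangulated categories preserves coproducts provided the left adjoint preserves compact objects. The technical crux is therefore to verify that $\Phi_\CE$ sends $\D^\perf(X)$ into $\D^\perf(Y)$: for perfect $F$, the tensor product $p^*F \otimes \CE$ is bounded by the finite $\Tor$-amplitude of $\CE$ over $X$, projectivity of $\supp\CE$ over $Y$ makes $q_*$ proper on the support, and perfectness of $q_*(p^*F \otimes \CE)$ can be extracted from the finite $\Ext$-amplitude of $\CE$ over $Y$.

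It then remains to identify the image of $\Phi_\CE|_{\D_{qc}(X)}$ with $\HCB$ and to restrict to $\D^-$. By Proposition~\ref{sodqc}, $\HCB$ is the smallest triangulated subcategory of $\D_{qc}(Y)$ containing $\CB^\perf = \CB \cap \D^\perf(Y)$ and closed under arbitrary direct sums. Since $\Phi_\CE|_\CA$ is (up to isomorphism) the given equivalence $\xi:\CA \to \CB$, and $\Phi_\CE$ preserves perfects, it restricts to an equivalence $\CA^\perf \xrightarrow{\sim} \CB^\perf$. Commuting with direct sums and cones, $\Phi_\CE$ then carries $\HCA$ into $\HCB$, and combined with the dual treatment of $\Phi^!_\CE|_{\HCB}$ as the quasi-inverse, one obtains $\HCA \cong \HCB$. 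Intersecting with $\D^-$ and invoking that $\CA^- = \HCA \cap \D^-(X)$ and $\CB^- = \HCB \cap \D^-(Y)$ by Proposition~\ref{sodm}, together with the already noted preservation of $\D^-$ by both adjoints, yields the restricted equivalence $\CA^- \cong \CB^-$.
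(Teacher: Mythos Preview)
Your overall strategy coincides with the paper's: establish condition~(2) of Lemma~\ref{sf_th} on $\D_{qc}(X)$ by showing that the locus where $\Phi_\CE \to \Phi_\CE\Phi_\CE^!\Phi_\CE$ is an isomorphism is a triangulated subcategory closed under coproducts and containing $\D^\perf(X)$, then invoke Lemma~\ref{perf-gen-qc}; the identification of the image and the passage to $\D^-$ are likewise parallel.

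The genuine gap is at the step where you claim $\Phi_\CE$ sends $\D^\perf(X)$ into $\D^\perf(Y)$. Your stated reason---that perfectness ``can be extracted from the finite $\Ext$-amplitude of $\CE$ over $Y$''---does not work: finite $\Ext$-amplitude bounds the cohomological amplitude of $G \mapsto \RCHom(\CE,q^!G)$, which gives $\Phi_\CE^!(G) \in \D^b(X)$, but says nothing about the $\Tor$-amplitude of $\Phi_\CE(F)$ as an object of $\D^b(Y)$, which is what perfectness requires. Moreover, since ``$\Phi_\CE$ preserves compacts'' and ``$\Phi_\CE^!$ preserves coproducts'' are logically equivalent via Neeman's criterion, neither can be used to bootstrap the other; an independent argument is needed. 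The paper supplies it by choosing a closed embedding $i:X \hookrightarrow X'$ with $X'$ smooth and replacing $\Phi_\CE^!$ by $i_*\Phi_\CE^! \cong \Phi^!_{(i\times\id_Y)_*\CE}$. On $X'\times Y$ the projection to $Y$ is smooth, so $q^!(-) \cong q^*(-)\otimes\omega_{X'}[\dim X']$ commutes with direct sums, and the pushed-forward kernel is perfect by \cite{K1},~10.46, so $\RCHom$ from it also commutes with direct sums; combined with coproduct-preservation of $p_*$ this shows $\Phi_\CE^!$ preserves coproducts. This reduction to the smooth case is precisely the missing ingredient in your argument.

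A smaller issue: you assert that $\Phi_\CE$ restricts to an equivalence $\CA^\perf \xrightarrow{\sim} \CB^\perf$. Even granting that $\Phi_\CE$ preserves perfects, essential surjectivity would require $\Phi_\CE^!$ to preserve perfects too, and the paper explicitly leaves this open in the Remark following the theorem. Fortunately it is not needed: for $\Phi_\CE(\D_{qc}(X)) \subset \HCB$ one only uses $\Phi_\CE(\D^\perf(X)) \subset \Phi_\CE(\D^b(X)) = \CB \subset \HCB$, and the reverse inclusion follows because the subcategory $\{G : \Phi_\CE\Phi_\CE^!(G) \to G \text{ is an isomorphism}\}$ contains $\CB$ and is closed under direct sums and cones.
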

\begin{proof}
As we already mentioned above the functor $\Phi_\CE$ commutes with direct sums.
Let us check that $\Phi_\CE^!$ also commutes with direct sums.
To do this we choose a closed embedding $i:X \to X'$ with $X'$ being smooth
and consider the functor $i_*\Phi_\CE^!$ instead. Since $i_*$ is a conservative functor
commuting with direct sums, it suffices to check that $i_*\Phi_\CE^!$ commutes with direct sums.
But it is clear that $i_*\Phi_\CE^! \cong \Phi_{(i\times\id_Y)_*\CE}^!$,
so from the whole beginning we can assume that $X$ is smooth.
Then the projection $X \times Y \to Y$ is smooth, hence
$q^!(F) \cong q^*(F)\otimes\omega_X[\dim X]$ evidently commutes with direct sums.
Further, $\CE$ is a perfect complex by~\cite{K1}, 10.46, hence
the functor $\RHom(\CE,-)$ commutes with direct sums. Finally, the functor
$p_*$ commutes with direct sums by~\cite{BV}, 3.3.4. Thus $\Phi_\CE^!$ commutes
with direct sums.

Further, since the functors $\Phi_\CE$ and $\Phi_\CE^!$ commute with direct sums
they commute with homotopy colimits by Lemma~\ref{hclc}.
%
%
Now if $F \in \D^-(X)$ then by Lemma~\ref{dmsfd} there exists a stabilizing
in finite degrees direct system of perfect complexes $F_k \in \D^b(X)$ such that $F \cong \hocolim F_k$.
Therefore $\Phi_\CE(F) \cong \Phi_\CE(\hocolim F_k) \cong \hocolim \Phi_\CE(F_k)$.
But the functor $\Phi_\CE$ has finite cohomological amplitude by Lemma~\ref{ftefca}.
Therefore the direct system $\Phi_\CE(F_k) \in \D^b(Y)$ stabilizes in finite degrees,
hence $\hocolim\Phi_\CE(F_k) \in \D^-(Y)$ by Lemma~\ref{sfd}. Thus
$\Phi_\CE$ takes $\D^-(X)$ to $\D^-(Y)$. The same argument shows that
$\Phi_\CE^!$ takes $\D^-(Y)$ to $\D^-(X)$.

To check that $\Phi_\CE$ is right splitting on $\D_{qc}(X)$ we have to check that
applying $\Phi_\CE$ to the canonical morphism of functors $\id \to \Phi_\CE^!\Phi_\CE$
gives an isomorphism $\Phi_\CE \cong \Phi_\CE\Phi_\CE^!\Phi_\CE$.
Consider the full subcategory $\CT \subset \D_{qc}(X)$ consisting of all objects $F \in \D_{qc}(X)$
for which $\Phi_{\CE}(F) \cong \Phi_{\CE}\Phi_{\CE}^!\Phi_{\CE}(F)$ in $\D_{qc}(Y)$.
We want to show that $\CT =\D_{qc}(X)$. Note that $\D^b(X) \subset \CT$ by the conditions,
and hence $\D^\perf(X) \subset \CT$. Moreover, since $\Phi$ and $\Phi^!$ commute with direct sums,
$\CT$ is closed under arbitrary direct sums. Finally, since $\Phi_\CE$ and $\Phi_\CE^!$ are exact,
$\CT$ is triangulated. So, by Lemma~\ref{perf-gen-qc} we have $\CT =\D_{qc}(X)$.


Now let us check that $\HCB = \Phi_\CE(\D_{qc}(X))$. Indeed, the RHS is contained in the LHS
by Lemma~\ref{perf-gen-qc} since $\HCB$ is closed under arbitrary direct sums triangulated
subcategory containing $\Phi_\CE(\D^\perf(X)) \subset \Phi_\CE(\D^b(X)) = \CB$.
For the other embedding it suffices to check that $\HCB$ is contained in the full
subcategory $\CT \subset \D_{qc}(Y)$ consisting of all objects $G$ such that the canonical morphism
$\Phi_\CE\Phi_\CE^!(G) \to G$ is an isomorphism. Indeed, $\CT$ contains $\CB$ by conditions of the Proposition.
Moreover, it is closed under arbitrary direct sums since both $\Phi_\CE$ and $\Phi_\CE^!$ commute with direct sums,
and is triangulated since both $\Phi_\CE$ and $\Phi_\CE^!$ are exact.
%
%
The same argument shows that $\HCA = \Im \Phi_\CE^!$, so it follows
that $\Phi_\CE$ induces an equivalence $\HCA \cong \HCB$.

Finally, since $\Phi_\CE$ and $\Phi_\CE^!$ preserve $\D^-$ and $\CA^- = \HCA \cap \D^-(X)$, $\CB^- = \HCB \cap \D^-(Y)$,
it follows that $\Phi_\CE$ induces an equivalence $\CA^- \cong \CB^-$.
\end{proof}

\begin{remark}
One can also check that $\Phi_\CE$ takes $\D^\perf(X)$ to $\D^\perf(Y)$ (this follows easily
from the fact that $\Phi_\CE^!$ commutes with direct sums). If it were also known that
$\Phi_\CE^!$ takes $\D^\perf(Y)$ to $\D^\perf(X)$ then it would follow that
$\Phi_\CE$ induces an equivalence $\CA^\perf \cong \CB^\perf$.
\end{remark}

\subsection{Base change}

Now assume that $f:X \to  S$ and $g:Y \to S$ are quasiprojective morphisms,
$\alpha:\CA \to \D^b(X)$ and $\beta:\CB \to \D^b(Y)$ are admissible $S$-linear subcategories, and
$\xi:\CA \to \CB$ is an $S$-linear equivalence.
Assume also that $\phi: T \to S$ is a base change faithful for both $f$ and $g$.
Again, consider the corresponding right splitting functor $\Phi:\D^b(X) \to \D^b(Y)$, $\Phi = \beta\circ\xi\circ\alpha^*$.
We assume also that $\Phi$ is {\sf geometrically $S$-linear}, meaning that it is isomorphic to a kernel functor
$$
\Phi_\CE:\D_{qc}(X) \to \D_{qc}(Y),
\qquad
\Phi_\CE(F) = q_*(p^*F\otimes \CE)
$$
with a kernel $\CE \in \D^-(X\times_S Y)$ supported on the fiber product of $X$ and $Y$ over $S$.
Here $p:X\times_S Y \to X$ and $q:X\times_S Y \to Y$ are the projections.
Note that the right adjoint functor $\Phi_\CE^!$ of $\Phi_\CE$ is given by the formula
$$
\Phi_\CE^!:\D_{qc}(Y) \to \D_{qc}(X),
\qquad
\Phi_\CE^!(G) = p_*\RCHom(\CE,q^!F).
$$

Consider the following commutative diagram
$$
\xymatrix{
X_T \ar[d]^\phi &
X_T\times_T Y_T \ar[d]^\phi \ar[r]^-{q_T} \ar[l]_-{p_T}
&
Y_T \ar[d]^\phi \\
X &
X\times_S Y \ar[r]^-q \ar[l]_-p &
Y
}
$$
Define the kernel $\CE_T := \phi^*\CE \in \D^-(X_T \times_T Y_T)$.

\begin{theorem}\label{bccor}
Assume that $\CE \in \D^b(X\times_S Y)$ has finite $\Tor$-amplitude over $X$, finite $\Ext$-amplitude over~$Y$,
and $\supp\CE$ is projective over both $X$ and $Y$. Assume also that $\Phi_\CE:\D^b(X) \to \D^b(Y)$
is a right splitting functor giving an equivalence of $S$-linear subcategories $\CA \subset \D^b(X)$
and $\CB \subset \D^b(Y)$. Then $\Phi_{\CE_T}:\D^b(X_T) \to \D^b(Y_T)$
is a right splitting functor inducing an equivalence $\CA_T \cong \CB_T$.
\end{theorem}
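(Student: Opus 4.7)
The plan is to apply Theorem~\ref{extcor} to the kernel $\CE$ first, in order to extend $\xi\colon\CA\cong\CB$ to mutually inverse equivalences between $\HCA$ and $\HCB$ realised by $\Phi_\CE$ and $\Phi_\CE^!$, and then to transfer the right splitting property of $\Phi_\CE$ to $\Phi_{\CE_T}$ by means of base change. As a preliminary step I would verify that $\CE_T=\phi^*\CE$ satisfies the hypotheses of Theorem~\ref{extcor} on the pair $(X_T,Y_T)$: projectivity of $\supp\CE_T$ over $X_T$ and $Y_T$ is inherited from that of $\supp\CE$ by base change, while finite $\Tor$-amplitude over $X_T$ and finite $\Ext$-amplitude over $Y_T$ follow from the corresponding properties of $\CE$ because $\phi\colon X_T\times_T Y_T \to X\times_S Y$ is itself the base change of $T\to S$ along the structure map $X\times_S Y \to S$. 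In particular $\CE_T\in\D^b(X_T\times_T Y_T)$.

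Next I would record the two base change isomorphisms
\[
(\phi_Y)_*\,\Phi_{\CE_T} \;\cong\; \Phi_\CE\,(\phi_X)_*
\qquad\text{and}\qquad
\phi_X^*\,\Phi_\CE \;\cong\; \Phi_{\CE_T}\,\phi_X^*,
\]
each obtained from a projection formula together with the base change isomorphism for the cartesian square involving $p$ or $q$. These squares are exact because faithful base changes are stable under arbitrary further base change, so faithfulness of $\phi$ for $f$ and $g$ implies faithfulness for the projections $p$ and $q$. In addition $\Phi_{\CE_T}$ is $T$-linear by a direct projection formula computation, the kernel $\CE_T$ being supported on the fiber product over $T$. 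Combining these with the description~\eqref{at} of the base-changed categories, the inclusions $\Phi_{\CE_T}(\CA_T)\subset\CB_T$ and $\Phi_{\CE_T}^!(\CB_T)\subset\CA_T$ are formal: for $F\in\CA_T$ and $G\in\D^\perf(T)$,
\[
(\phi_Y)_*\bigl(\Phi_{\CE_T}(F)\otimes g^*G\bigr) \;\cong\; \Phi_\CE\bigl((\phi_X)_*(F\otimes f^*G)\bigr) \;\in\; \Phi_\CE(\HCA) \;\subset\; \HCB.
\]

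It remains to show that the restrictions $\Phi_{\CE_T}|_{\CA_T}$ and $\Phi_{\CE_T}^!|_{\CB_T}$ are mutually inverse equivalences, and to identify $\Ker\bigl(\Phi_{\CE_T}|_{\D^b(X_T)}\bigr)$ with $(\Ker\Phi_\CE)_T = \CA_T^\perp\cap\D^b(X_T)$. For the first, I would take $F\in\CA_T$ and consider the cone $C$ of the unit $F\to\Phi_{\CE_T}^!\Phi_{\CE_T}(F)$; after applying $(\phi_X)_*(-\otimes f^*G)$, using $T$-linearity and the two compatibilities above, this unit is transformed into the analogous unit $(\phi_X)_*(F\otimes f^*G)\to\Phi_\CE^!\Phi_\CE\,(\phi_X)_*(F\otimes f^*G)$, which is an isomorphism by Theorem~\ref{extcor} since $(\phi_X)_*(F\otimes f^*G)\in\HCA$. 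Specialising to $G=L_0^k$ for a line bundle $L_0$ on $T$ ample over $S$ (so that $f^*L_0$ is a line bundle on $X_T$ ample over $X$), Lemma~\ref{dbdm} applied to $C$ forces $C=0$. The symmetric argument treats the composition on $\CB_T$, and the same base change trick plus Lemma~\ref{dbdm} identifies $\Ker\Phi_{\CE_T}|_{\D^b(X_T)}$ with $(\Ker\Phi_\CE)_T$, i.e.\ with the complementary component of the SOD of $\D^b(X_T)$ produced by Theorem~\ref{sodxt} from $\D^b(X)=\lan\CA,\Ker\Phi_\CE\ran$. This yields the two semiorthogonal decompositions $\D^b(X_T)=\lan\CA_T,\Ker\Phi_{\CE_T}\ran$ and $\D^b(Y_T)=\lan\Ker\Phi_{\CE_T}^!,\CB_T\ran$ required by characterisation~(3) of Lemma~\ref{sf_th}, proving that $\Phi_{\CE_T}$ is right splitting. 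The main delicate point is the vanishing of the cone $C$: one must verify that the family of twists $f^*L_0^k$ is rich enough to detect vanishing in $\D^b(X_T)$ through Lemma~\ref{dbdm}, which is precisely where the faithfulness of $\phi$ and the ampleness of $L_0$ over $S$ are both essential.
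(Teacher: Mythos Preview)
Your proposal is correct and follows essentially the same line as the paper: verify that $\CE_T$ inherits the required amplitude and support conditions, record the base change compatibilities $\phi_*\Phi_{\CE_T}\cong\Phi_\CE\phi_*$ and $\phi_*\Phi_{\CE_T}^!\cong\Phi_\CE^!\phi_*$ (these are the isomorphisms~\eqref{fff} in the paper, quoted from~\cite{K1}), and then use Lemma~\ref{dbdm} with twists by $f^*L_0^k$ to detect vanishing of the relevant cones. The only organisational difference is that the paper verifies characterisation~(2) of Lemma~\ref{sf_th} on all of $\D_{qc}(X_T)$ (showing $\Phi_{\CE_T}\cong\Phi_{\CE_T}\Phi_{\CE_T}^!\Phi_{\CE_T}$ there) and then identifies $\Im\Phi_{\CE_T}=\HCB_T$ via~\eqref{hait} before intersecting with $\D^b$, whereas you work directly on $\D^b$ and assemble characterisation~(3) using Theorem~\ref{sodxt}; both routes rest on the same two ingredients. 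One small slip: since $\D^b(X)=\lan\CA,\Ker\Phi_\CE\ran$ (with $\CA=\Im\Phi_\CE^!$ the first component), the kernel is ${}^\perp\CA$, so after base change you should write $(\Ker\Phi_\CE)_T={}^\perp(\CA_T)$ rather than $\CA_T^\perp$.
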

\begin{proof}
First of all note that $\CE_T$ has finite $\Tor$-amplitude over $X_T$, finite $\Ext$-amplitude over $Y$,
and projective support over both $X_T$ and $Y_T$ by~\cite{K1}, 10.47. Hence as it was mentioned in the proof
of Theorem~\ref{extcor} the functors $\Phi_\CE$, $\Phi_\CE^!$, $\Phi_{\CE_T}$, $\Phi_{\CE_T}^!$ commute
with direct sums and homotopy colimits.

Moreover, by~\cite{K1}~2.4 the functors $\Phi_\CE^!$ and $\Phi_{\CE_T}^!$ are right adjoint to $\Phi_\CE$ and $\Phi_{\CE_T}$
respectively, and all these functors preserve boundedness and coherence. Finally, by~\cite{K1}~2.42, there are canonical isomorphisms
\begin{equation}\label{fff}
\begin{array}{rclrcl}
\Phi_{\CE_T}\phi^*&=&\phi^*\Phi_{\CE},&\qquad
\Phi_{\CE}\phi_*&=&\phi_*\Phi_{\CE_T},
\\
\Phi_{\CE_T}^!\phi^*&=&\phi^*\Phi_{\CE}^!,&\qquad
\Phi_{\CE}^!\phi_*&=&\phi_*\Phi_{\CE_T}^!.
\end{array}
\end{equation}

Since $\Phi_\CE$ is right splitting on $\D_{qc}(X)$ by Theorem~\ref{extcor}, applying $\Phi_\CE$ to the canonical morphism
of functors $\id \to \Phi_\CE^!\Phi_\CE$ gives an isomorphism $\Phi_\CE \cong \Phi_\CE\Phi_\CE^!\Phi_\CE$.
Now take any $H \in \D_{qc}(X_T)$.
We want to show that $\Phi_{\CE_T}(H) \cong \Phi_{\CE_T}\Phi_{\CE_T}^!\Phi_{\CE_T}(H)$ in $\D_{qc}(Y_T)$.
By Lemma~\ref{dbdm} to do this it suffices to check that
$\phi_*(\Phi_{\CE_T}(H)\otimes g^*L^k) \cong \phi_*(\Phi_{\CE_T}\Phi_{\CE_T}^!\Phi_{\CE_T}(H)\otimes g^*L^k)$ in $\D_{qc}(Y)$
for an ample over $S$ line bundle $L$ on $T$ and any $k \gg 0$. But
\begin{multline*}
\phi_*(\Phi_{\CE_T}(H)\otimes g^*L^k) \cong
\phi_*(\Phi_{\CE_T}(H\otimes f^*L^k)) \cong
\Phi_{\CE}(\phi_*(H\otimes f^*L^k)) \cong \\ \cong
\Phi_\CE\Phi_\CE^!\Phi_{\CE}(\phi_*(H\otimes f^*L^k)) \cong
\phi_*(\Phi_{\CE_T}\Phi_{\CE_T}^!\Phi_{\CE_T}(H\otimes f^*L^k)) \cong
\phi_*(\Phi_{\CE_T}\Phi_{\CE_T}^!\Phi_{\CE_T}(H)\otimes g^*L^k).
\end{multline*}
The first and the fifth isomorphisms are given by $T$-linearity of the functors $\Phi_{\CE_T}$ and $\Phi_{\CE_T}^!$,
the second and the fourth are given by~\eqref{fff} and the third is because $\Phi_\CE$ is right splitting. So, we conclude that
$\Phi_{\CE_T} \cong \Phi_{\CE_T}\Phi_{\CE_T}^!\Phi_{\CE_T}$, hence $\Phi_{\CE_T}$ is a right splitting functor.

Now let us show that $\Phi_{\CE_T}(\D_{qc}(X_T)) = \HCB_T$.
Indeed, let $F \in \D_{qc}(X_T)$. Let $G$ be a perfect complex on~$T$. Then we have
$$
\phi_*(\Phi_{\CE_T}(F)\otimes g^*G) \cong
\phi_*(\Phi_{\CE_T}(F\otimes f^*G)) \cong
\Phi_{\CE}(\phi_*(F\otimes f^*G)) \in \HCB,
$$
hence $\Phi_{\CE_T}(F) \in \HCB_T$ by~\eqref{hait}. Further, since $\Phi_{\CE_T}$ is a right splitting $T$-linear
functor commuting with arbitrary direct sums, the category $\Phi_{\CE_T}(\D_{qc}(X_T))$ is a $T$-linear triangulated
subcategory in $\D_{qc}(Y_T)$ closed under arbitrary direct sums. On the other hand,
$$
\phi^*(\CB^\perf) \subset
\phi^*(\CB) =
\phi^*(\Phi_\CE(\D^b(X))) =
\Phi_{\CE_T}(\phi^*(\D^b(X))) \subset
\Phi_{\CE_T}(\D_{qc}(X_T))
$$
so it follows from the definition of $\HCB_T$ that $\HCB_T \subset \Phi_{\CE_T}(\D_{qc}(X_T))$.
The same argument shows that $\Phi_{\CE_T}^!(\D_{qc}(Y_T)) = \HCA_T$.

Finally, as we already mentioned the functors $\Phi_{\CE_T}$ and $\Phi_{\CE_T}^!$ preserve $\D^b$ and since
$\CA_T = \HCA_T \cap \D^b(X_T)$, $\CB_T = \HCB_T \cap \D^b(Y_T)$,
it follows that $\Phi_{\CE_T}$ induces an equivalence $\CA_T \cong \CB_T$.
\end{proof}

\section{Applications}

As an application we deduce that the projection functors of a strong semiorthogonal
decomposition are kernel functors.

\begin{theorem}
Let $X$ be a quasiprojective variety and $\D^b(X) = \lan \CA_1,\dots,\CA_m \ran$
a strong semiorthogonal decomposition. Let $\alpha_i:\D^b(X) \to \D^b(X)$ be
the projection functor to the $i$-th component. Assume that each $\alpha_i$
has finite cohomological amplitude. Then for every $i$ there is an object
$K_i \in \D^b(X\times X)$ such that $\alpha_i \cong \Phi_{K_i}$.
\end{theorem}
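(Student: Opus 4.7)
The plan is to construct the $K_i$ by decomposing the structure sheaf of the diagonal $\CO_\Delta = \Delta_*\CO_X \in \D^b(X\times X)$ with respect to an appropriate semiorthogonal decomposition of $\D^b(X\times X)$ induced by $\lan\CA_1,\dots,\CA_m\ran$, and then to promote the tautological isomorphism $\Phi_{\CO_\Delta}\cong\id_{\D^b(X)}$ to isomorphisms $\Phi_{K_i}\cong\alpha_i$.

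Let $\pi_1,\pi_2:X\times X\to X$ be the two projections, so that $\Phi_K(F)=\pi_{2*}(\pi_1^*F\otimes K)$. Apply Corollary~\ref{prd} to the product $X\times X$, viewing the \emph{second} factor as the variety carrying the given semiorthogonal decomposition and the \emph{first} factor as the auxiliary variety~$Y$. This produces a semiorthogonal decomposition $\D^b(X\times X)=\lan\CB_1,\dots,\CB_m\ran$ whose components are
\[
\CB_i = \bigl\{K\in\D^b(X\times X)\bigm|\pi_{2*}(K\otimes\pi_1^*G)\in\HCA_i\text{ for every }G\in\D^\perf(X)\bigr\},
\]
and whose projection functors again have finite cohomological amplitude. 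Decomposing $\CO_\Delta$ with respect to this decomposition produces a tower $0=T_m\to T_{m-1}\to\cdots\to T_0=\CO_\Delta$ with $K_i:=\Cone(T_i\to T_{i-1})\in\CB_i\subset\D^b(X\times X)$; these are the candidate kernels.

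Fix $F\in\D^b(X)$ and apply the triangulated-in-$K$ operation $K\mapsto\Phi_K(F)$ to the above tower. The projection formula gives $\Phi_{\CO_\Delta}(F)\cong F$, while $\Phi_0(F)=0$, so one obtains a diagram of the shape~\eqref{tower} for $F$ in $\D_{qc}(X)$ whose successive cones are $\Phi_{K_i}(F)$. The key assertion is that $\Phi_{K_i}(F)\in\HCA_i$ for all $F\in\D^b(X)$. When $F$ is perfect this is immediate from the defining condition on $\CB_i$ applied to $G=F$. For general $F\in\D^b(X)$, use Lemma~\ref{dmsfd} to write $F\cong\hocolim F_n$ with $F_n\in\D^\perf(X)$. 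Since $\pi_1^*$, $(-)\otimes K_i$, and $\pi_{2*}$ each commute with arbitrary direct sums (the last by~\cite{BV}~3.3.4), the functor $\Phi_{K_i}$ does too, hence commutes with homotopy colimits by Lemma~\ref{hclc}. Therefore $\Phi_{K_i}(F)\cong\hocolim\Phi_{K_i}(F_n)$ lies in $\HCA_i$, using closure of $\HCA_i$ under direct sums and cones.

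Granted the assertion, the tower constructed above is a filtration of $F$ with factors in the $\HCA_i$, hence by uniqueness of the decomposition for $\D_{qc}(X)=\lan\HCA_1,\dots,\HCA_m\ran$ (Lemma~\ref{ff}) it coincides with the canonical one and $\Phi_{K_i}(F)\cong\halpha_i(F)$. The hypothesis that every $\alpha_i$ has finite right cohomological amplitude is exactly what allows Proposition~\ref{sodqc} to identify $\halpha_i|_{\D^b(X)}$ with $\alpha_i$, giving $\Phi_{K_i}(F)\cong\alpha_i(F)$; functoriality in $F$ is automatic from the functoriality in Lemma~\ref{ff}. I expect the main technical difficulty to lie precisely at the bounded/unbounded interface: the $K_i$ need not have finite $\Tor$-amplitude over the first copy of $X$, so $\Phi_{K_i}$ is naturally controlled only on $\D_{qc}(X)$, and the identification with $\alpha_i$ on $\D^b(X)$ must be made through the unbounded extension of the decomposition together with the finite cohomological amplitude hypothesis.
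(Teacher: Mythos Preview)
Your proof is correct and follows essentially the same route as the paper: decompose $\Delta_*\CO_X$ in the induced semiorthogonal decomposition of $\D^b(X\times X)$ from Corollary~\ref{prd}, then identify the resulting tower with the $\halpha_i$-filtration of $F$ via the uniqueness in Lemma~\ref{ff}, and finally use the second part of Proposition~\ref{sodqc} to descend from $\halpha_i$ to $\alpha_i$ on $\D^b(X)$. The only difference is tactical: to show $\Phi_{K_i}(F)\in\HCA_i$, the paper appeals directly to Lemma~\ref{isl1} (which gives $K_i\otimes\pi_1^*F\in\HCA_{iX}$ for arbitrary $F\in\D_{qc}(X)$) and Proposition~\ref{dqc}, whereas you first use the defining condition of $\CB_i$ for perfect $F$ and then extend to $\D^b(X)$ via Lemma~\ref{dmsfd} and commutation with homotopy colimits---a perfectly valid alternative that reaches the same conclusion.
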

\begin{remark}
Note that the condition that the semiorthogonal decomposition is strong
is necessary for the projection functors to be representable by kernels.
Indeed, every functor isomorphic to $\Phi_K$ has a right adjoint functor,
hence if $\alpha_1 \cong \Phi_K$ then $\alpha_1$ has a right adjoint functor
hence $\CA_1$ is right admissible.
\end{remark}
\begin{proof}
We consider the semiorthogonal decomposition
$\D^b(X\times X) = \lan \CA_{1X},\dots,\CA_{mX} \ran$
constructed in Corollary~\ref{prd} and let $K_i$ be the component of $\Delta_*\CO_X \in \D^b(X\times X)$
in $\CA_{iX}$. Consider the corresponding filtration of $\Delta_*\CO_X$:
$$
\xymatrix@C=10pt{
0 \ar@{=}[rr] && T_m \ar[rr] && T_{m-1} \ar[rr] \ar[dl] && \dots \ar[rr] && T_1 \ar[rr] && T_0 \ar@{=}[rr] \ar[dl] && \Delta_*\CO_X \\
&&& K_m \ar@{..>}[ul] &&& \dots &&& K_1 \ar@{..>}[ul]
}
$$
Take any $F \in \D_{qc}(X)$, pull it back to $X\times X$ via the projection $p_1:X\times X \to X$,
then tensor it by the above diagram and push forward to $X$ via the projection $p_2:X\times X \to X$.
We will obtain the following diagram in $\D_{qc}(X)$
$$
\xymatrix@C=1pt{
0 \ar@{=}[rr] &&
p_{2*}(T_m\otimes p_1^*F) \ar[rr] \ar@{..>}[dr]+<-7pt,7pt>;[] &&
p_{2*}(T_{m-1}\otimes p_1^*F) \ar[rr] \ar[dl]+<7pt,7pt>  &&
\dots \ar[rr] &&
p_{2*}(T_1\otimes p_1^*F) \ar[rr] \ar@{..>}[dr]+<-7pt,7pt>;[] &&
p_{2*}(T_0\otimes p_1^*F) \ar@{=}[rr] \ar[dl]+<7pt,7pt> &&
p_{2*}(\Delta_*\CO_X\otimes p_1^*F) \\
&&& \hbox to 0pt {\hss$p_{2*}(K_m\otimes p_1^*F)$\hss} &&& \dots &&& \hbox to 0pt {\hss$p_{2*}(K_1\otimes p_1^*F)$\hss}
}
$$
Note that by Lemma~\ref{isl1} we have $K_i\otimes p_1^*F \in \HCA_{iX}$,
hence $p_{2*}(K_i\otimes p_1^*F) \in \HCA_i$ by Proposition~\ref{dqc}.
On the other hand $p_{2*}(\Delta_*\CO_X\otimes p_1^*F) \cong F$, so we conclude
that $p_{2*}(K_i\otimes p_1^*F) \cong \halpha_i(F)$.
Restricting to $\D^b(X)$ and using Lemma~\ref{cc} we obtain an isomorphism
$\Phi_{K_i} \cong \alpha_i$ on $\D^b(X)$.
%
\end{proof}

This Theorem has a relative variant.

\begin{theorem}
Let $f:X \to S$ be a morphism of quasiprojective varieties and
$\D^b(X) = \lan \CA_1,\dots,\CA_m \ran$ an $S$-linear strong semiorthogonal decomposition.
Let $\alpha_i:\D^b(X) \to \D^b(X)$ be the projection functor to the $i$-th component.
Assume that the map $f$ is faithful base change for itself and each $\alpha_i$
has finite cohomological amplitude. Then for every $i$ there is an object
$K_i \in \D^b(X\times_S X)$ such that $\alpha_i \cong \Phi_{K_i}$.
\end{theorem}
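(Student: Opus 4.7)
The argument mirrors the proof of the preceding (absolute) theorem, with Theorem~\ref{sodxt} replacing Corollary~\ref{prd}. Since $f$ is a faithful base change for itself, we apply Theorem~\ref{sodxt} to the base change $\phi = f:T=X\to S$, orienting things so that the base-change copy of $X$ is the left factor of $X\times_S X$: in Theorem~\ref{sodxt}'s notation the map $f:X_T\to T$ becomes the first projection $p_1:X\times_S X\to X$ and $\phi:X_T\to X$ becomes the second projection $p_2$. We thus obtain an $S$-linear semiorthogonal decomposition $\D^b(X\times_S X) = \lan \CA_{1X},\dots,\CA_{mX}\ran$. Combining Proposition~\ref{sodqc} with Lemma~\ref{isl1}, the extension $\D_{qc}(X\times_S X) = \lan \HCA_{1X},\dots,\HCA_{mX}\ran$ has components stable under $\otimes p_1^*G$ for any $G\in\D_{qc}(X)$, while by Proposition~\ref{dqc} the pushforward $p_{2*}$ satisfies $p_{2*}(\HCA_{iX}) \subset \HCA_i$.

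Since $f$ is quasiprojective, hence separated, the relative diagonal $\Delta:X\to X\times_S X$ is a closed immersion, so $\Delta_*\CO_X\in\D^b(X\times_S X)$. Let $K_i\in\CA_{iX}$ denote its components in the above decomposition, fitting into a tower $0=T_m\to T_{m-1}\to\dots\to T_0=\Delta_*\CO_X$ with cones $K_i$. For any $F\in\D^b(X)$, tensor this tower with $p_1^*F$ and apply $p_{2*}$ to obtain a tower in $\D_{qc}(X)$ whose bottom term is $p_{2*}(\Delta_*\CO_X\otimes p_1^*F)\cong p_{2*}\Delta_*(F)\cong F$ (by the projection formula and $p_2\circ\Delta=\id_X$) and whose $i$-th cone is $\Phi_{K_i}(F):=p_{2*}(K_i\otimes p_1^*F)$. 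By the $p_1$-linearity of $\HCA_{iX}$ we have $K_i\otimes p_1^*F\in\HCA_{iX}$, and hence by the compatibility of $p_{2*}$ with the decompositions we get $\Phi_{K_i}(F)\in\HCA_i$. Therefore the constructed tower is the canonical filtration of $F$ for the decomposition $\D_{qc}(X)=\lan\HCA_1,\dots,\HCA_m\ran$, so by Lemma~\ref{ff} this identification is functorial in $F$ and yields an isomorphism $\Phi_{K_i}\cong\halpha_i$ on $\D_{qc}(X)$. Restricting to $\D^b(X)$ and using the compatibility between the decompositions of $\D_{qc}(X)$ and of $\D^b(X)$ guaranteed by Proposition~\ref{sodqc}, we obtain the required isomorphism of functors $\Phi_{K_i}\cong\alpha_i$ on $\D^b(X)$.

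The only real subtlety is the choice of orientation in applying Theorem~\ref{sodxt}: one needs the $T$-linearity of the resulting decomposition of $\D^b(X\times_S X)$ to translate into closure under $\otimes p_1^*(-)$ rather than $\otimes p_2^*(-)$, so that it matches the convention for the kernel functor $\Phi_K(F)=p_{2*}(K\otimes p_1^*F)$. Beyond this bookkeeping, the proof uses no input beyond Theorem~\ref{sodxt}, Proposition~\ref{dqc}, and Lemma~\ref{isl1}, and is a direct transcription of the absolute case.
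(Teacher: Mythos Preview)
Your proof is correct and follows the same approach that the paper sketches: the paper's own proof is a two-sentence remark that the argument is analogous to the absolute case, using the induced decomposition of $\D^b(X\times_S X)$ and decomposing $\Delta_*\CO_X$ for the relative diagonal $\Delta:X\to X\times_S X$. You have correctly filled in the details, in particular the bookkeeping on orientation so that the $T$-linearity from Theorem~\ref{sodxt} becomes $p_1$-linearity and the $\phi_*$-compatibility from Proposition~\ref{dqc} becomes $p_{2*}$-compatibility, matching the kernel-functor convention $\Phi_K(F)=p_{2*}(K\otimes p_1^*F)$.
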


The proof is analogous. We consider the induced semiorthogonal decomposition
of $\D^b(X\times_S X)$ and consider the decomposition of $\Delta_*\CO_X$,
where this time $\Delta$ denotes the diagonal embedding into the fiber product
$\Delta:X \to X\times_S X$.

\end{document}